\newcommand{\punctured}{\mathbb{CP}^1\setminus\{a_1,\ldots,a_{n}\}}
\newcommand{\Npunctured}{\mathbb{CP}^1\setminus\{a_1=0,a_2,\ldots,a_{n(N)}\}}
\newcommand{\qsec}{q_{_2}}
\newcommand{\qk}{q_{_{k}}}
\newcommand{\f}{\mathbf{f}}
\newcommand{\B}{\mathbf{B}}
\newcommand{\C}{\mathbf{c}}
\newcommand{\tB}{\tilde{B}}
\newcommand{\tc}{\tilde{c}}
\DeclareMathOperator{\Aut}{Aut}
\renewcommand{\Re}{\operatorname{Re}}
\renewcommand{\d}{\operatorname{d}}
\renewcommand{\q}{q_{_N}}
\theoremstyle{theorem}
\newtheorem{theorem}{Theorem}[section]
\newtheorem{prop}[theorem]{Proposition}
\newtheorem{coro}[theorem]{Corollary}
\newtheorem{lemma}[theorem]{Lemma}
\theoremstyle{definition}
\newtheorem{definition}[theorem]{Definition}
\newtheorem{example}[theorem]{Example}
\theoremstyle{theorem}
\newtheorem{Theorem}{Theorem}
\newtheorem{Coro}[Theorem]{Corollary}
\theoremstyle{remark}
\newtheorem{remark}{Remark}
\newtheorem{claim}{Claim}
\numberwithin{equation}{section}
\title[Punctured Riemann sphere and Modular functions]{Hyperbolic Metric, punctured Riemann sphere and Modular functions}
\author{Junqing Qian}
\address{Department of Mathematics\\Storrs\\CT 06268}
\curraddr{}
\email{junqing.qian@uconn.edu}
\thanks{This work was supported by NSF grant DMS-1611745.}
\subjclass[]{}
\date{}
\dedicatory{}
\begin{document}
	
	\begin{abstract}
		We derive a precise asymptotic expansion of the complete K\"{a}hler-Einstein metric on the punctured Riemann sphere with three or more omitting points. By using Schwarzian derivative, we prove that the coefficients of the expansion are polynomials on the two parameters which are uniquely determined by the omitting points. Futhermore, we use the modular form and Schwarzian derivative to explicitly determine the coefficients in the expansion of the complete K\"{a}hler-Einstein metric for punctured Riemann sphere with $3, 4, 6$ or $12$ omitting points.
	\end{abstract}
	\maketitle
    \tableofcontents
    \section{Introduction}\label{sec:introduction}

The main object in this paper is the punctured Riemann sphere $\punctured$, where $\mathbb{CP}^1$ is the projective space over $\mathbb{C}$ of dimension one, and $\{a_1,\ldots,a_n\}$ are $n$ different points that are omitting from $\mathbb{CP}^1$. It is well-known that there exists a unique complete K\"{a}hler-Einstein metric on $\mathbb{CP}^1\setminus\{a_1,\ldots,a_n\}$, $n\ge 3$, with negative constant Gauss curvature. However, an explicit asymptotic expansion of the metric near $a_j$,  $j=1,\ldots,n$, remains unknown. More precisely, the coefficients in the expansion are difficult to determine. In this article, we derive a precise asymptotic formula for the K\"{a}hler-Einstein metric whose coefficients are polynomials on the first two parameters, which are determined by the punctures $\{a_1,\ldots,a_n\}$. Furhtermore, all coefficient polynomials can be explicitly written down when $n=3,4,6,12$.

The asymptotic expansion of the complete K\"ahler-Einstein metric on the quasi-projective manifold $M = \overline{M} \setminus D$ was proposed by Yau~\cite[p. 377]{YauReviewOfGeometry}, where $D$ is a normal crossing divisor in the project manifold $\overline{M}$ such that $K_{\overline{M}} + D > 0$. The leading order term has been known to him since the late 1970s (see \cite{Yau1978}). Several people have worked on the asymptotic expansion, see for example \cite{Schumacher1998MathAnn, DaminWu2006, RochonZhang2012}, using techniques from partial differential equations. Another important class of the quasi-projective manifolds is that $M$ is the quotient of Siegel space $\mathcal{S}_g/\Gamma$ by an arithmetic subgroup $\Gamma$, $g \ge 2$, and $\overline{M}$ is Mumford's toroidal compactification. In this case $K_{\overline{M}} + D$ is big and nef. The complete K\"ahler-Einstein volume form on $M$ has been written down in \cite{WangSmoothSiegel1993,YauZhang2014}.

The open Riemann surface $\mathbb{C}\mathbb{P}^1 \setminus \{a_1, \ldots, a_n\}$, $n\ge 3$, is only a one-dimensional example of $\overline{M} \setminus D$ with $K_{\overline{M}} + [D] > 0$. It is nevertheless the building block of a general complete quasi-projective manifold with negative holomorphic sectional curvature. Indeed, given a projective manifold $X$, for any point $x \in X$, there exists a Zariski neighborhood $U = X \setminus Z$ (where $Z$ is an algebraic subvariety of $X$) of $x$ such that $U$ can be embedded into the product 
$$ S_1 \times \cdots \times S_N $$
as a closed algebraic submanifold, in which $N$ is some positive integer, and each $S_j$ is of the form $\mathbb{C}\mathbb{P}^1 \setminus \{a_1, \ldots, a_n\}$ for $n \ge 3$ (see \cite[p. 25, Lemma 2.3]{GriffithsZariski1971}). Take the complete K\"ahler-Einstein metric on each $S_j$. Then, the product metric restricted to $U$ gives a complete K\"ahler metric $\omega$ on $U$ of finite volume and negatively pinched holomorphic sectional curvature. Furthermore, by a recent result \cite[Theorem 3]{DaminYau2017}, the quasi-projective manifold $U$ possesses a complete K\"ahler-Einstein metric which is uniformly equivalent to the metric $\omega$; see also \cite[Example 5.4]{DaminYau2018}. 
 
To derive the expansion of K\"ahler-Einstein metric on $\mathbb{C}\mathbb{P}^1 \setminus \{a_1, \ldots, a_n\}$, we 
make use of the modular forms in number theory. This technique enables us to obtain the precise expansion with explicit coefficients, which were obscure in the literature by the abstract series expansion or the kernel of local linear operators. Recall the well-known uniformization theorem (see \cite{HubbardTeichmuller}) which indicates that the punctured Riemann sphere has the unit disk $\mathbb{D}$ as its universal covering space, or equivalently, the upper half plane $\mathbb{H}$. The idea is to find this covering map, then induce the Poincar\'{e} metric from $\mathbb{H}$ to our object. 

F. Klein and R. Fricke had included classic theories regard automorphic function in their lecture notes (see \cite{KleinBook}). Later, the Teichm\"{u}ller space was introduced by O. Teichm\"{u}ller and developed by L. Ahlfors and L. Bers (see \cite{BersTechmuller, BersUniModuliKlein, AhlforsTeichmuller}). One of the methods that turns out very useful is the application of Schwarzian derivative. Let $f$ be a covering map for the universal covering space $\mathbb{H}\rightarrow\punctured$, then $f$ uniquely determines its Schwarzian derivative 
$$\{f,\tau\}=2(f_{\tau\tau}/f_{\tau})_{\tau}-(f_{\tau\tau}/f_{\tau})^2,\quad\mbox{where }f=f(\tau),\,\tau\in\mathbb{H}.$$
On the other hand, the Schwarzian derivative of the inverse of $f$ is well-defined and uniquely determined by the following equation
\begin{equation}\label{eq:intro-schwarzian-expansion}
\{\tau,f\}=\sum_{j=1}^{n}\left(\frac{1}{(f-a_j)^2}+\frac{2\beta_j}{\alpha_j}\frac{1}{f-a_j}\right),
\end{equation}
where all $a_j$ are the omitting points from the punctured Riemann sphere, and $\beta_j, \alpha_j$ are some constants, $j=1,\ldots,n$, which will be discussed in section \ref{section:schwarzian-derivative}. By an argument from its geometric property, there are $(n-3)$ ratios from the set $\{\frac{\beta_1}{\alpha_1},\ldots,\frac{\beta_n}{\alpha_n}\}$ that are independent to each other, these constants are often called the accessory parameters. Determining the accessory parameters is a way to figure out the automorphic function. This approach has been studied by I. Kra in \cite{IrwinKraAccessoryPara} and A. B. Venkov in \cite{VenkovRussianPaper}. However, despite the results from automorphic function and Teichm\"{u}ller theory, such covering map $f:\mathbb{H}\rightarrow\punctured$ is still quite difficult to write out.

On the other hand, the congruence subgroup in $\mbox{SL}_2(\mathbb{Z})$ is a great collection of Fuchsian groups with nice properties. J. McKay and A. Sebbar discovered a connection between modular forms and Schwarzian derivative of automorphic functions in \cite{McKay2000}, which will be mentioned in section \ref{section:modular-forms}. Furthermore, there are several interesting connections between modular curve, Schwarzian derivative and graph theory that are mentioned by A. Sebbar in \cite{SebbarTorsionFree, SebbarModularCurve}.

%The space of weight 4 modular form corresponding to the full modular group $\Gamma(1)$ is dimension one and is generated by the Eisenstein series $E_4$, which leads to our main result.

The main result of this article is given by the following theorem.
\begin{Theorem}\label{thm:into-main-1}
	A universal covering map $f:\mathbb{H}\rightarrow\mathbb{CP}^1\setminus\{a_1=0,a_2,\ldots,a_n\}$ which vanishes at infinity can be given by the following expansion
	\begin{equation}
	f=f(\tau)=A\left(\qk+\frac{B}{A}\qk^2+\C_3(A,\frac{B}{A})\qk^3+\sum_{m=4}^{\infty}\C_m(A,\frac{B}{A})\qk^m\right)\notag
	\end{equation}
	in $\qk=\exp\{\frac{2\pi i}{k}\tau\}$, $\tau\in\mathbb{H}$, for some real number $k$, where $A,B$ are constants depending only on $a_2,\ldots,a_n$, and the coefficient term $\C_m(A,\frac{B}{A})$ is a polynomial on $A$ and $\frac{B}{A}$ for $m\ge 3$. In particular, $\C_3(A,\frac{B}{A})=\frac{1}{16}\left[19\frac{B^2}{A^2}-A^2\sum_{j=2}^{n}\left(\frac{1}{a^2_j}-\frac{1}{a_j}\frac{2\beta_j}{\alpha_j}\right)\right]$, where $\alpha_j,\beta_j$ are the constants in equation \eqref{eq:intro-schwarzian-expansion}. Consequently, the complete K\"{a}hler-Einstein metric can be given by the following asymptotic expansion
	\begin{align}
	|ds|=\frac{1}{|f|\log|\frac{f}{A}|}\left|1-\left(\frac{B}{A}\frac{f}{A}-\frac{\Re(\frac{B}{A}\frac{f}{A})}{\log|\frac{f}{A}|}\right)+\sum_{m=2}^{\infty}R_m(A,\frac{B}{A},\frac{f}{A},\frac{f^s\overline{f^{m-s}}}{A^s\overline{A^{m-s}}\log^j|\frac{f}{A}|})\right||df|
	\end{align}
	at the cusp $0$, where $f\in\mathbb{CP}^1\setminus\{a_1=0,a_2,\ldots,a_n\}$ and $R_m(A,\frac{B}{A},\frac{f}{A},\frac{f^s\overline{f^{m-s}}}{A^s\overline{A^{m-s}}\log^j|\frac{f}{A}|})$ is a polynomial in $A,\frac{B}{A},\frac{f}{A},\frac{f^s\overline{f^{m-s}}}{A^s\overline{A^{m-s}}\log^j|\frac{f}{A}|}$, $s,j=0,1,\ldots,m$, with constant coefficients for $m\ge 2$.
\end{Theorem}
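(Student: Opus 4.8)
The plan is to construct the covering map $f$ by exploiting the relation between the Schwarzian derivative of the inverse map and the local behavior near the cusp $a_1 = 0$. Since $f$ vanishes at infinity and is automorphic under a Fuchsian group whose cusp at $a_1 = 0$ has width $k$, the natural local coordinate is $\qk = \exp\{\frac{2\pi i}{k}\tau\}$, and $f$ admits a power series expansion $f = A\qk + (\text{higher order})$ with $A \neq 0$ setting the leading normalization. First I would substitute the unknown expansion $f = A\qk + B\qk^2 + \sum_{m\ge 3} c_m \qk^m$ directly into the Schwarzian identity \eqref{eq:intro-schwarzian-expansion}. The key point is that $\{\tau, f\}$ transforms as a quadratic differential, so $\{f,\tau\} = -(f_\tau)^2 \{\tau, f\}$; computing the left-hand side from the series gives a Laurent expansion in $\qk$ whose coefficients are universal polynomial expressions in $A, B, c_3, c_4, \ldots$, while the right-hand side is controlled by the $a_j$ and the accessory parameters $\beta_j/\alpha_j$.

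**Extracting the recursion:**

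The heart of the argument is to expand the right-hand side of \eqref{eq:intro-schwarzian-expansion} as a function of $f$ near $f = 0$. Since $a_1 = 0$, the term $j=1$ contributes $\frac{1}{f^2} + \frac{2\beta_1}{\alpha_1}\frac{1}{f}$, which carries the dominant singularity, while each term $j \ge 2$ with $a_j \neq 0$ is holomorphic at $f=0$ and expands as $\sum_{\ell \ge 0} \bigl(\frac{\ell+1}{a_j^{\ell+2}} - \frac{2\beta_j}{\alpha_j}\frac{1}{a_j^{\ell+1}}\bigr) f^\ell$. Substituting the series for $f$ in terms of $\qk$, then multiplying by $-(f_\tau)^2$ and matching against the intrinsic Schwarzian $\{f,\tau\}$ computed from the $\qk$-expansion, yields order by order in $\qk$ a triangular system of equations. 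The leading orders fix the relationship $k$ must satisfy (from the $\qk^{-2}$ coefficient, reflecting that the cusp has an order-two pole structure normalized by $A$), and each subsequent order solves uniquely for $c_m$ as a polynomial in $A$, $B/A$, the $a_j^{-1}$, and the $\beta_j/\alpha_j$. I would carry out the first few orders explicitly to confirm that $c_2 = B$ is free (it encodes the second parameter) and that $c_3$ matches the stated formula $\C_3 = \frac{1}{16}[19\frac{B^2}{A^2} - A^2\sum_{j\ge 2}(\frac{1}{a_j^2} - \frac{1}{a_j}\frac{2\beta_j}{\alpha_j})]$; the appearance of $A^2 = c_1^2$ and $B/A$ confirms the claimed polynomial dependence, since rescaling $f \mapsto f/A$ absorbs all higher $a_j$-data into the combination $A^2\sum_j(\cdots)$.

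**Passing to the metric expansion:**

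Once $f$ is determined, the metric follows from pulling back the Poincaré metric $|ds_{\mathbb{H}}| = \frac{|d\tau|}{\operatorname{Im}\tau}$ via $\tau = f^{-1}$. Writing $\operatorname{Im}\tau = -\frac{k}{2\pi}\log|\qk|$ and $\qk = \frac{f}{A}(1 - \frac{B}{A}\frac{f}{A} + \cdots)$ by inverting the series, I would compute $\log|\qk| = \log|\frac{f}{A}| + \operatorname{Re}\log(1 - \frac{B}{A}\frac{f}{A} + \cdots)$ and expand the logarithm, which is where the mixed terms $\frac{f^s \overline{f^{m-s}}}{\log^j|f/A|}$ appear: each power of the inverse series contributes holomorphic and antiholomorphic factors, and dividing by $\operatorname{Im}\tau \sim \log|f/A|$ generates the inverse-log powers. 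The factor $\frac{d\tau}{df} = \frac{1}{f_\tau}$ supplies the prefactor $\frac{1}{|f|\log|f/A|}$ after normalization, and the leading correction $\frac{B}{A}\frac{f}{A} - \frac{\operatorname{Re}(\frac{B}{A}\frac{f}{A})}{\log|f/A|}$ comes precisely from the $m=1$ contribution of $\log(1 + \cdots)$ together with the Jacobian. The remaining terms organize into the polynomials $R_m$ by collecting like powers of $f/A$, and their polynomiality in all listed arguments is inherited directly from the polynomiality of the $c_m$ established in the first half.

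**Main obstacle:**

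\emph{The main obstacle} I anticipate is controlling the combinatorics of the triangular recursion well enough to prove polynomiality in general, rather than merely verifying it for small $m$. The recursion for $c_m$ involves nonlinear convolutions of the lower coefficients (from the $(f_\tau)^2$ factor and from the powers $f^\ell$ on the right-hand side), so a naive induction produces rational expressions; the nontrivial claim is that all denominators cancel and only the combination $B/A$, together with $A^2$ and the fixed puncture data, survives. I expect the cleanest route is an inductive argument tracking a weighted degree (assigning weight $1$ to $A$, weight $0$ to $B/A$, and matching the homogeneity forced by the scaling $f \mapsto \lambda f$, $a_j \mapsto \lambda a_j$), showing the weight is preserved at each order so that the structure is rigid; verifying this homogeneity is preserved through the Schwarzian's quadratic nonlinearity is the delicate step, and the analogous bookkeeping recurs in the metric expansion when proving the $R_m$ are genuinely polynomial in the inverse-log variables.
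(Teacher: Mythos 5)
Your proposal follows essentially the same route as the paper's proof: the parabolic generator fixing $\infty$ gives the Fourier expansion $f=A\qk+B\qk^2+\cdots$ with $A\neq0$ (Proposition \ref{prop:coveringmap-global-expression}), matching $\{f,\tau\}$ computed from this series against the rational expression for $\{\tau,f\}$ produces exactly the triangular system \eqref{eq:schwarzian-analytic-equation-set} solved inductively for the $\C_m$ with the same constant-term computation yielding $\C_3$ (Theorem \ref{thm:coefficient-depend-on-A-B}), and the metric statement follows by pulling back the Poincar\'e metric through the inversion series and the $\log(1+x)$ expansions as in Theorems \ref{thm:metric-expression} and \ref{thm:final-metric-formula-expansion-1}. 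Two small remarks: the ``main obstacle'' you flag dissolves more simply than your weighted-degree scheme suggests, since the paper tracks that every equation in the recursion is linear in the newest coefficient $\C_{m+3}$ with \emph{nonzero constant} coefficient (so no rational expressions ever arise); and the $\qk^{-2}$ coefficient does not actually constrain $k$ --- $k$ cancels via $\tau_f=\frac{k}{2\pi i}\qk^{-1}f_{\qk}^{-1}$ and is just the translation length of the parabolic generator, the orders $\qk^{-2}$ and $\qk^{-1}$ matching automatically once $\alpha_1,\beta_1$ are expressed through $A,B$.
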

To illustrate the second result, we define the following values for $n(N)$,
\begin{equation}\label{eq:cusp-number-Gamma-N}
n(2)=3,\quad n(3)=4,\quad n(4)=6,\quad n(5)=12.
\end{equation}
The second result is given by the following theorem and corollary.
\begin{Theorem}\label{thm:intro-thm-2}
	Let $n(N)$ be the values in equation \eqref{eq:cusp-number-Gamma-N}, and let $f_N:\mathbb{H}\rightarrow\Npunctured$ be a universal covering space with deck transformation group $\Aut(f_N)=\Gamma(N)$-the principal congruence subgroup-which vanishes at infinity,  $N=2,3,4,5$. Then the map $f_N$ can be given as the following expansion 
	\begin{equation}\label{eq:intro-thm-2}
		f_N=f_N(\tau)=A\left( \q+\frac{B}{A}\q^2+\sum_{m=3}^{\infty}\C_{m}(\frac{B}{A})\q^m\right)
	\end{equation}
	in $\q=\exp\{\frac{2\pi}{N}i\tau\},\tau\in\mathbb{H}$, where the constants $A,B\in\mathbb{C}$ are uniquely determined by the set of values of the punctured points $\{a_1=0,a_2,\ldots,a_{n(N)}\}$, and the coefficient term $\C_m(\frac{B}{A})$ is a polynomial in $\frac{B}{A}$ with constant coefficients for $m\ge 3$.
\end{Theorem}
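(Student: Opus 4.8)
The plan is to exploit the special feature of the values $N=2,3,4,5$ that the modular curve $X(N)=\mathbb{H}^*/\Gamma(N)$ has genus zero, so that the covering map is simply a Möbius image of one Hauptmodul whose $\q$-expansion is a fixed numerical series independent of $A$. First I would record the classical facts that for these $N$ the group $\Gamma(N)$ acts on $\mathbb{H}$ without elliptic fixed points, has exactly $n(N)$ cusps each of width $N$, and that $X(N)$ has genus zero (the genus formula $g=1+\tfrac{d}{12}-\tfrac{c}{2}$ gives $g=0$ precisely for $N\le 5$ and $g=1$ already at $N=6$, which is what confines the theorem to this list). Consequently $\mathbb{H}/\Gamma(N)=\Npunctured$ is a smooth $n(N)$-times punctured sphere, and the natural local parameter at the cusp $\infty$ is $\q=\exp\{\tfrac{2\pi}{N}i\tau\}$. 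Genus zero then furnishes a Hauptmodul $t_N$, which I normalize to have a simple zero at the cusp $\infty$ and leading coefficient one, so that $t_N=\q+c_2\q^2+c_3\q^3+\cdots$ with fixed constants $c_j\in\mathbb{C}$ depending only on $N$.

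Next I would reduce an arbitrary covering map to $t_N$. Since $X(N)$ has genus zero, a universal covering map $f_N$ with $\Aut(f_N)=\Gamma(N)$ descends to a biholomorphism $X(N)\xrightarrow{\sim}\mathbb{CP}^1$ carrying cusps to punctures; composing with the inverse of the biholomorphism induced by $t_N$ shows $f_N=M\circ t_N$ for a unique $M\in\mathrm{PSL}_2(\mathbb{C})$. The normalization that $f_N$ vanishes at infinity, with $a_1=0$, forces $M(0)=0$, hence $M(z)=\tfrac{Az}{1+\mu z}$ and
\begin{equation}
f_N=\frac{A\,t_N}{1+\mu t_N}=A\sum_{k\ge 1}(-\mu)^{k-1}t_N^{\,k}\notag
\end{equation}
for constants $A,\mu\in\mathbb{C}$; these are pinned down by requiring the $n(N)$ cusps to be sent to $\{a_1=0,a_2,\ldots,a_{n(N)}\}$, which is exactly the sense in which $A,B$ are uniquely determined by the punctures.

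Finally I would extract the coefficients by expanding the geometric series in $\q$ and using $t_N^{\,k}=\q^k+\cdots$. The coefficient of $\q$ is $A$, and the coefficient of $\q^2$ is $A(c_2-\mu)$, identifying the leading constant and giving $\frac{B}{A}=c_2-\mu$, i.e. $\mu=c_2-\frac{B}{A}$. For $m\ge 3$ the coefficient of $\q^m$ equals $A\,\C_m$, where $\C_m=\sum_{k=1}^{m}(-\mu)^{k-1}[\q^m]t_N^{\,k}$ is a polynomial in $\mu$ whose coefficients are the fixed numbers $[\q^m]t_N^{\,k}$; substituting $\mu=c_2-\frac{B}{A}$ then exhibits $\C_m$ as a polynomial in $\frac{B}{A}$ with constant coefficients, as claimed. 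As a consistency check, for $m=3$ this yields $\C_3=\left(\frac{B}{A}\right)^2+(c_3-c_2^2)$, which agrees with the $\C_3$ formula of Theorem~\ref{thm:into-main-1} once the quantity $A^2\sum_{j\ge 2}\left(\frac{1}{a_j^2}-\frac{1}{a_j}\frac{2\beta_j}{\alpha_j}\right)$ is recognized as itself a polynomial in $\frac{B}{A}$ for these symmetric configurations.

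The main obstacle, and the only place where the restriction $N\in\{2,3,4,5\}$ is truly essential, is the genus-zero input: it is what permits the covering map to be written as a single Möbius transform of one Hauptmodul carrying an $A$-independent $\q$-expansion, and this is precisely what makes the separate dependence on $A$ present in Theorem~\ref{thm:into-main-1} collapse, leaving a polynomial in $\frac{B}{A}$ alone. For the accompanying explicit computations one must in addition produce the concrete $t_N$ and its $\q$-expansion for each $N$ (for instance through the modular $\lambda$-function for $N=2$ and suitable eta-quotients for $N=3,4,5$); determining these numerical coefficients $c_j$, rather than the structural polynomiality, is the computational heart of the remaining work.
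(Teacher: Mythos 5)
Your proposal is correct, but it proceeds by a genuinely different mechanism than the paper's proof of Theorem~\ref{thm:intro-thm-2} (restated and proved as Theorem~\ref{thm:2nd-main-thm}). The paper never writes $f_N$ as a M\"obius image of a fixed Hauptmodul. Instead, it uses normality of $\Gamma(N)$ in $\Gamma(1)$ (Lemma~\ref{lemma:gamma-N-gamma-1-normolizer}) together with Corollary~\ref{coro:schwarzian-f-tau-weight-4-normalizer} to conclude that the Schwarzian $\{f_N,\tau\}$ is a weight~$4$ automorphic form for the \emph{full} modular group; since that space is one-dimensional (Theorem~\ref{thm:weight-4-modular-form-E-4}), matching constant terms forces $\{f_N,\tau\}=\frac{4\pi^2}{N^2}E_4(\tau)$, and comparing the known $q$-expansion of $E_4$ with the expansion of $\frac{4\pi^2}{N^2}(1-\q^2\{f_N,\q\})$ from Proposition~\ref{prop:schwarzian-f-qk-expansion} yields the triangular system $P_m(\B,\C_3,\ldots,\C_{m+3})=240\sigma_3(l)$ when $m=lN-2$ and $=0$ otherwise, which is solved inductively for the $\C_m$ as polynomials in $\B$ because each $P_m$ has degree one in $\C_{m+3}$ with constant coefficient. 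Your route --- descend both $f_N$ and a normalized Hauptmodul $t_N=\q+c_2\q^2+\cdots$ to biholomorphisms $X(N)\rightarrow\mathbb{CP}^1$ (after the routine removable-singularity extension across the cusps, which you should state), deduce $f_N=M\circ t_N$ with $M(z)=Az/(1+\mu z)$, and expand the geometric series --- is sound, and your consistency check is right: $\C_3=\B^2+(c_3-c_2^2)$ reproduces the paper's $\C_3=\B^2-20$ for $N=2$ via $t_2=\lambda/16$, i.e.\ $c_2=-8$, $c_3=44$. Your argument is the more conceptual one: it makes transparent why the $A$-dependence present in Theorem~\ref{thm:into-main-1} collapses (it is absorbed into the M\"obius reparametrization), and it actually proves the structural claim for any torsion-free genus-zero Fuchsian group with a cusp at infinity, whereas the paper's argument is tied to normality of $\Gamma(N)$ in $\Gamma(1)$. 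What the paper's route buys is effectivity: the $E_4$ identity is an explicit recursion, exactly what produces $\C_3=\B^2,\ \C_4=\B^3,\ldots$ for $\Gamma(5)$ and $\C_3=\B^2-20,\ \C_4=\B^3-40\B,\ldots$ for $\Gamma(2)$ in the examples, while in your scheme the numerical coefficients $c_j$ of $t_N$ must be produced separately, as you acknowledge. One caveat, which applies equally to the paper's own proof: the asserted uniqueness of $A,B$ given the set of punctures holds only up to the finite ambiguity coming from automorphisms of the punctured sphere fixing $0$ (equivalently, the action of $\Gamma(1)/\Gamma(N)$ permuting the other cusps), so your looser ``pinned down by where the cusps go'' matches the paper's level of precision rather than falling short of it.
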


\begin{remark}
	Explicit formulas of $\C_m(A,\frac{B}{A})$ are given in \cite{ChoQianKobayashi} in terms of Bell polynomials.
\end{remark}

In particular, when $n=2$,  $n(2)=3$, it is a triple punctured Riemann sphere, the covering map $\mathbb{H}\rightarrow\mathbb{CP}^1\setminus\{a_1=0,a_2,a_3\}$ for arbitrary $a_2,a_3$ is given in the following example. 

\begin{example}\label{example:intro-thm-2}
	The covering map $f_2:\mathbb{H}\rightarrow\mathbb{CP}^1\setminus\{a_1=0,a_2,a_3\}$ which vanishes at infinity has the following expansion
	\begin{equation*}
	f_2(\tau)=\frac{16a_2a_3}{a_3-a_2}\left[\qsec-\left(8+\frac{16a_2a_3}{a_3-a_2}\right)\qsec^2+\sum_{m=3}^{\infty}\C_m\left(8+\frac{16a_2a_3}{a_3-a_2}\right)\qsec^m\right],
	\end{equation*}
	in $\qsec=\exp\{\pi i\tau\}$, $\tau\in\mathbb{H}$, where $\C_m\left(-8-\frac{16a_2a_3}{a_3-a_2}\right)$ is a polynomial in the term $\left(-8-\frac{16a_2a_3}{a_3-a_2}\right)$ for $m\ge 3$. In particular, $\C_3=\left(-8-\frac{16a_2a_3}{a_3-a_2}\right)^2-20$. In this case, the constants $A,B$ in Theorem \ref{thm:intro-thm-2} are given by the following
	\begin{equation*}
	A=\frac{16a_2a_3}{a_3-a_2},\qquad B=\left(\frac{16a_2a_3}{a_3-a_2}\right)\cdot\left[-\left(8+\frac{16a_2a_3}{a_3-a_2}\right)\right].
	\end{equation*}
\end{example}

\begin{Coro}\label{coro:intro}
	As a consequence of Theorem \ref{thm:intro-thm-2}, the complete K\"{a}hler-Einstein metric on $\Npunctured$ has the following asymptotic expansion
	\begin{equation}
	|ds|=\frac{1}{|f|\log|\frac{f}{A}|}\left|1-\left(\frac{B}{A}\frac{f}{A}-\frac{\Re(\frac{B}{A}\frac{f}{A})}{\log|\frac{f}{A}|}\right)+\sum_{m=2}^{\infty}R_m(\frac{B}{A},\frac{f}{A},\frac{f^s\overline{f^{m-s}}}{A^s\overline{A^{m-s}}\log^j|\frac{f}{A}|})\right||df|
	\end{equation}
	at the cusp $a_1=0$, where $R_m(\frac{B}{A},\frac{f}{A},\frac{f^s\overline{f^{m-s}}}{A^s\overline{A^{m-s}}\log^j|\frac{f}{A}|})$ is a polynomial in $\frac{B}{A},\frac{f}{A},\frac{f^s\overline{f^{m-s}}}{A^s\overline{A^{m-s}}\log^j|\frac{f}{A}|}$, $s,j=0,1,\ldots,m$, with constant coefficients for $m\ge 2$.
\end{Coro}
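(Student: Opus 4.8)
The plan is to obtain the expansion by a single change of variables, pushing the Poincar\'e metric of $\mathbb{H}$ down to $\Npunctured$ through the covering map $f_N$ of Theorem~\ref{thm:intro-thm-2}. By uniformization the complete K\"ahler--Einstein metric of constant curvature $-1$ on $\Npunctured$ is the descent of $|ds|=|d\tau|/\operatorname{Im}\tau$ on $\mathbb{H}$ (the hyperbolic metric being $\Gamma(N)$-invariant, hence well defined on the quotient); since $f_N$ is a local biholomorphism onto a punctured neighborhood of the cusp $a_1=0$ and $f$ is the local coordinate there, it suffices to rewrite $|d\tau|/\operatorname{Im}\tau$ in terms of $f$ and $df$.

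First I would record the two consequences of $\q=\exp\{\frac{2\pi}{N}i\tau\}$: that $\operatorname{Im}\tau=-\frac{N}{2\pi}\log|\q|$, and, by the chain rule through $\q$, that
$$\frac{d\tau}{df}=\frac{N}{2\pi i}\frac{1}{\q}\frac{d\q}{df}=\frac{N}{2\pi i\,f}\,Q(\q),\qquad Q(\q):=\frac{f}{\q\,df/d\q}.$$
Here $Q$ is a power series in $\q$ with $Q(0)=1$ whose coefficients are polynomials in $\frac{B}{A}$, because the overall factor $A$ cancels and the $\C_m$ in Theorem~\ref{thm:intro-thm-2} depend only on $\frac{B}{A}$. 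Substituting into $|ds|=\frac{1}{\operatorname{Im}\tau}\left|\frac{d\tau}{df}\right||df|$, the factors $\frac{N}{2\pi}$ cancel and I obtain
$$|ds|=\frac{|Q(\q)|}{|f|\,(-\log|\q|)}\,|df|.$$

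Next I would eliminate $\q$ in favor of $f$. Inverting $\frac{f}{A}=\q+\frac{B}{A}\q^2+\sum_{m\ge3}\C_m(\frac{B}{A})\q^m$ by Lagrange inversion expresses $\q$ as a convergent power series in $\frac{f}{A}$ with coefficients polynomial in $\frac{B}{A}$; hence $\log\q=\log\frac{f}{A}+P(\frac{f}{A})$ with $P(0)=0$, and $Q$ too becomes such a series in $\frac{f}{A}$. Taking real parts gives $\log|\q|=\log|\frac{f}{A}|+\Re P$, and I factor the leading logarithm out of the denominator,
$$\frac{1}{-\log|\q|}=\frac{1}{-\log|\frac{f}{A}|}\cdot\frac{1}{1+\Re P/\log|\frac{f}{A}|}=\frac{1}{-\log|\frac{f}{A}|}\sum_{\ell\ge0}\left(-\frac{\Re P}{\log|\frac{f}{A}|}\right)^{\ell}.$$
Multiplying this real, positive series against $|Q|$ and absorbing it inside one modulus produces the asserted shape $\frac{1}{|f|\log|\frac{f}{A}|}\,|1+\cdots|\,|df|$ (the positive factor in the denominator being $-\log|\frac{f}{A}|=\log(1/|\frac{f}{A}|)$). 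Tracking the linear terms, $Q=1-\frac{B}{A}\frac{f}{A}+O(\q^2)$ and $\Re P=-\Re(\frac{B}{A}\frac{f}{A})+O(|f|^2)$, reproduces exactly the displayed correction $-\big(\frac{B}{A}\frac{f}{A}-\frac{\Re(\frac{B}{A}\frac{f}{A})}{\log|\frac{f}{A}|}\big)$.

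The remaining, and principal, step is the bookkeeping that collects terms of total homogeneity $m$ in $(\frac{f}{A},\overline{\frac{f}{A}})$ into coefficients of precisely the stated form $R_m$. Two mechanisms generate the monomials: taking real parts couples $\frac{f}{A}$ to its conjugate $\overline{\frac{f}{A}}$ and thereby yields the mixed numerators $f^s\overline{f^{m-s}}$, while each factor of $\Re P/\log|\frac{f}{A}|$ in the geometric expansion contributes one power of $1/\log|\frac{f}{A}|$. I expect the crux to be the degree count: since $\Re P$ has homogeneity $\ge1$, a monomial of homogeneity $m$ can carry at most $m$ such factors, so the power $j$ of $1/\log|\frac{f}{A}|$ is bounded by $m$ and no positive powers of the logarithm survive; this pins down each $R_m$ as a polynomial in $\frac{B}{A}$ and the monomials $\frac{f^s\overline{f^{m-s}}}{A^s\overline{A^{m-s}}\log^j|\frac{f}{A}|}$, $s,j=0,\ldots,m$, with the polynomiality in $\frac{B}{A}$ inherited termwise from Theorem~\ref{thm:intro-thm-2} and the inversion. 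Since suppressing $A$ throughout is exactly what distinguishes this computation from the one behind Theorem~\ref{thm:into-main-1}, the corollary follows by specialization.
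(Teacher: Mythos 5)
Your proposal is correct and takes essentially the same route as the paper: the paper deduces this corollary directly from Theorem \ref{thm:final-metric-formula-expansion-1} together with Theorem \ref{thm:2nd-main-thm} (the latter making every coefficient a polynomial in $\B=\frac{B}{A}$ alone), and the proof of Theorem \ref{thm:final-metric-formula-expansion-1} is exactly your computation — invert the $\q$-expansion with coefficients polynomial in $\B$, write $\log|\q(\f)|=\log|\f|+\Re\log|1+\tB\f+\cdots|$ to generate the mixed monomials $\f^s\overline{\f^{m-s}}$, expand geometrically in $\Re P/\log|\f|$, absorb the real positive factor into a single modulus, and bound the power of $1/\log|\f|$ by the homogeneity $m$. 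Your only deviations are cosmetic: you use $\operatorname{Im}\tau=-\tfrac{N}{2\pi}\log|\q|$ and the pre-inversion factor $Q(\q)=f/\bigl(\q\,df/d\q\bigr)$, whereas the paper (Theorem \ref{thm:metric-expression}) computes $\bigl(\log \q(\f)\bigr)_{\f}=\tfrac{1}{\f}+\tB+\cdots$ after inversion, which is the same calculation.
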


As an application of our result, several examples are given in the section \ref{section:example}. For example, we give the complete K\"{a}hler-Einstein metric of $\mathbb{CP}^1\setminus\{0,1,\infty\}$ in the following example.

\begin{example}\label{example:intro}
	The covering map $f_2:\mathbb{H}\rightarrow\mathbb{CP}^1\setminus\{0,1,\infty\}$ has the following expansion
	\begin{equation}
	f=f_2(\tau)=16\qsec-128\qsec^2+704\qsec^3-3072\qsec^4+11488\qsec^5+O(\qsec^6)
	\end{equation}
	in $\qsec=\exp\{\pi i\tau\}$, $\tau\in\mathbb{H}$, and we write $f=f_2$ for convenience. In this case, the constants $A,B$ in Theorem \ref{thm:intro-thm-2} take values $A=16, B=-128$. Therefore the coefficients of $f=f_2$ in equation \eqref{eq:intro-thm-2} are givien by $B/A=\frac{-128}{16}=-8$, $\C_3=(B/A)^2-20=(-8)^2-20=44$. From Corollary \ref{coro:intro}, the complete K\"{a}hler-Einstein metric has the following explicit expansion
	\begin{align*}
	|ds|&=\frac{1}{|f|\log|f/16|}\left|1+\frac{1}{2}\left(f-\frac{\Re f}{\log|f/16|}\right)\right.\\
	&\qquad\left.-\left[\frac{51}{32}f^2-\frac{1}{4}\frac{f\Re f}{\log|f/16|}+\frac{51}{64}\frac{\Re(f^2)}{\log|f/16|}+\frac{1}{4}\frac{(\Re f)^2}{\log^2|f/16|}\right]+O(f^3)\right||df|
	\end{align*}
	at the cusp $0$, where $f\in\mathbb{CP}^1\setminus\{0,1,\infty\}$.
\end{example}

\begin{remark}
		The K\"ahler-Einstein metric on $\mathbb{CP}^1\backslash\{0,1,\infty\}$ was known by S. Agard. He gave a globle explicit formula in terms of a double integral (see equation (2.6) in \cite{AgardDist}).
\end{remark}

\begin{remark}
        The argument of this article can derive a general case of $a_1,a_2,a_3$, please see Corollaries \ref{coro:section-8-1} and \ref{coro:section-8-2}.
\end{remark}

In this article, we start with studying the fundamental group $\pi_1$ of the punctured Riemann sphere, its topology indicates that every element from $\pi_1(\punctured)$ corresponds to an automorphism of its universal cover, more precisely an element in $\mbox{SL}_2(\mathbb{R})$. Two similar arguments from its monodromy action imply Theorem \ref{thm:generator-are-parabolic}, which indicates that each generator is of \textit{parabolic} type. This fact allows us to interpret the covering map $f$ as an expansion at the corresponding cusp, so as the complete K\"{a}hler-Einstein metric. In the next section, section \ref{section:schwarzian-derivative}, we introduce and discuss the Schwarzian derivative in the analytic sense, and derive Theorem \ref{thm:coefficient-depend-on-A-B} at the end, which plays an important role to our main result. In section \ref{section:ramification-point}, we will focus on the action of a discrete group on $\mathbb{H}$. Couple propositions are introduced in section \ref{section:modular-group} in order to target the congruence subgroups that are of genus \textit{zero} without \textit{elliptic} point. In section \ref{section:modular-forms}, we introduce the modular forms and discuss its relation with the Schwarzian derivative. The main results and couple examples are provided in the last section.%.                                                                                                                                                                                                                                                                                                                                             
    \section{Construction of Universal Covering Space}\label{section:construction of universal covering space}

We will sketch the process of the universal covering from $\mathbb{H}$ to $\punctured$ in the graphic way. The general procedure of constructing the universal covering space is mentioned in \cite[p. 8-12, section 2]{NevaROLF}. I will describe the most straightforward case in this article, which will be sufficient and helpful for readers to understand the topic. To cover the punctured Riemann sphere $\punctured$ from the unit disk $\mathbb{D}$, or equivalently, the upper half plane $\mathbb{H}$, we first connect the $n$ punctures such that it is a polygon without self-crossing, rename the vertexes as $a_1,a_2,\ldots,a_n$ counter-clockwisely. Then, let us pick $n$ points randomly on the unit circle, and name them $c_1,\ldots,c_n$ counter-clockwisely which correspond to $a_1,\ldots,a_n$ respectively, and let $c_1=1, c_n=-1$. The collection of the $n$ points all lie on the upper half unit circle. Connecting the two points that are right next to each other by arcs. In particular, $c_1$ and $c_n$ are connected by the diameter, and we denote this circular polygon as $P_1$. Reflect $P_1$ with respect to the side $c_1c_n$, it will result in $(n-2)$ points from the reflections of $\{c_2,\ldots,c_{n-1}\}$ on the lower half circle, we mark those points counter-clockwisely by $c_{n+1},\ldots,c_{2n-2}$, and mark the reflected polygon by $P_2$. Then we call the combined polygon $P=P_1+P_2$ a fundamental polygon of this covering (see Figure \ref{fig:PolygoOnDisk}).

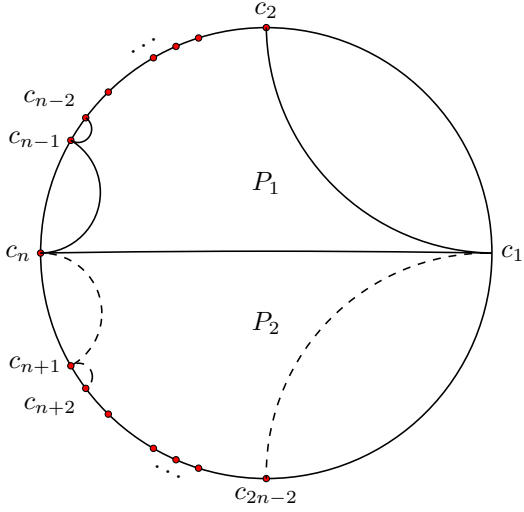
\begin{figure}
	\begin{center}
	\begin{tikzpicture}[scale=3]
	%name the circle (O,A)
	\tkzDefPoint(0,0){O}
	\tkzDefPoint(1,0){A}
	\tkzDrawCircle[D](O,A)
	%name the points and draw them
	\tkzDefPoint(-1,0){c1} %c1=c1'
	\tkzDefPoint({-root(2,0.75)},0.5){c2}
	\tkzDefPoint({-root(2,0.75)},-0.5){c2'}
	\tkzDefPoint(-0.8,0.6){c3}
	\tkzDefPoint(-0.8,-0.6){c3'}
	\tkzDefPoint(-0.7,{root(2,0.51)}){c4}
	\tkzDefPoint(-0.7,{-root(2,0.51)}){c4'}
	\tkzDefPoint(-0.5,{root(2,0.75)}){c5}
	\tkzDefPoint(-0.5,{-root(2,0.75)}){c5'}
	\tkzDefPoint(-0.4,{root(2,0.84)}){c6}
	\tkzDefPoint(-0.4,{-root(2,0.84)}){c6'}
	\tkzDefPoint(-0.3,{root(2,0.91)}){c7}
	\tkzDefPoint(-0.3,{-root(2,0.91)}){c7'}
	\tkzDefPoint(0,1){cn}
	\tkzDefPoint(0,-1){cn'}
	\tkzDefPoint(1,0){c0}
	\tkzDrawPoints[color=black,fill=red,size=6](c1,c2,c3,c4,c2',c3',c4',cn,cn',c5,c6,c7,c5',c6',c7')
	\tkzLabelPoint[left](c1){$c_n$}
	\tkzLabelPoint[left](c2){$c_{n-1}$}
	\tkzLabelPoint[left](c2'){$c_{n+1}$}
	\tkzLabelPoint[above left](c3){$c_{n-2}$}
	\tkzLabelPoint[below left](c3'){$c_{n+2}$}
	\tkzLabelPoint[above](cn){$c_2$}
	\tkzLabelPoint[below](cn'){$c_{2n-2}$}
	\tkzLabelPoint[right](c0){$c_1$}
	\tkzLabelPoint[above,rotate=30](c5){$\cdots$}
	\tkzLabelPoint[below,rotate=-25](c6'){$\cdots$}
	
	%name polygons
	\tkzDefPoint(0,0.4){P1}
	\tkzLabelPoint[below](P1){$P_1$}
	\tkzDefPoint(0,-0.4){P2}
	\tkzLabelPoint[above](P2){$P_2$}
	%drawing geodesics
	\tkzClipCircle(O,A)    % erase everything outside the circle%
	\tkzDrawCircle[orthogonal through=c1 and c2](O,A)
	\tkzDrawCircle[orthogonal through=c2 and c3](O,A)
	\tkzDrawCircle[orthogonal through=cn and c0](O,A)
	\tkzDrawCircle[orthogonal through=c1 and c0](O,A)
	\tkzDrawCircle[orthogonal through=c1 and c2',dashed](O,A)
	\tkzDrawCircle[orthogonal through=c2' and c3',dashed](O,A)
	\tkzDrawCircle[orthogonal through=cn' and c0,dashed](O,A)
	\end{tikzpicture}
\end{center}
	\caption{Polygon on Disk}
	\label{fig:PolygoOnDisk}
\end{figure}

Next we construct the reproduction of the covering process. Let $V$ be the set of vertexes with either only odd index or even index. For a vertex $c_m\in V$, reflect $P$ with respect to an adjacent side $c_{m-1}c_m$, it results in a new polygon $\tilde{P}=\tilde{P}_1+\tilde{P}_2$, where $\tilde{P}_i$ is the reflecting image of $P_i$, $i=1,2$. Notice that $\tilde{P}_1$ is the image of $P_1$ from reflections of odd times. $\tilde{P}_2$ is the image of $P_1$ from reflections of even times since $\tilde{P}_2$ is the reflection of $P_2$ and $P_2$ is the reflection of $P_1$. Next we reflect $P$ with respect to $c_mc_{m+1}$, which is the other adjacent side of $c_m$. We replicate such reflections on both sides of each vertex in the set $V$, it will result in a new polygon $P^{(1)}$ on $\mathbb{D}$, where $P^{(1)}$ is composed by $2\times(n-1)+1=2n-1$ copies of $P^{(0)}=P$, and it has $2\times(2n-3)\times(n-1)$ sides and vertexes. Keep repeating this process on every $P^{(i)}$, it will cover the unit disk $\mathbb{D}$ as $i$ approaches $\infty$.

If we change the universal covering space to the upper half plane $\mathbb{H}$, the construction will be the same through the Cayley transformation,
\begin{equation}\label{eq:cayley-transformation}
   t:\mathbb{H}\rightarrow\mathbb{D},\qquad z\mapsto t(z)=\frac{z-i}{z+i}.
\end{equation}
For example, Figure \ref{fig:PolygonOnH} shows the corresponding fundamental polygon on $\mathbb{H}$.

\begin{figure}
	% % % % % p=4 case % % % % %

\begin{center}
	\begin{tikzpicture}[scale=1]
	% axis
	\draw [->] (-5,0) -- (5,0);
	\draw [->] (0,0) -- (0,4);
	\node [below] at (5,0) {$x$};
	\node [left] at (0,4) {$y$};
	% name points
	\node at (0,5) {$t^{-1}(c_1)=\infty$};
	\tkzDefPoint(0,0){a1};
	\tkzDefPoint(1.2,0){a2};
	\tkzDefPoint(-1.2,0){a2'};
	\tkzDefPoint(1.7,0){a3};
	\tkzDefPoint(-1.7,0){a3'};
	\tkzDefPoint(2,0){a4};
	\tkzDefPoint(-2,0){a4'};
	\tkzDefPoint(2.2,0){a5};
	\tkzDefPoint(-2.2,0){a5'};
	%\tkzDefPoint(2.5,0){a6};
	%\tkzDefPoint(-2.5,0){a6'};
	\tkzDefPoint(3,0){an};
	\tkzDefPoint(-3,0){an'};
	\tkzDrawPoints[color=black,fill=red,size=6](a1,a2,a2',a3,a3',a4,a4',a5,a5',an,an');
	\node [below,scale=0.7] at (a1) {$t^{-1}(c_n)$};
	\node [below,scale=0.7] at (a2) {$t^{-1}(c_{n-1})$};
	\node [below,scale=0.7] at (a2') {$t^{-1}(c_{n+1})$};
	\node [below] at (a4) {$\cdots$};
	\node [below,scale=0.7] at (an) {$t^{-1}(c_2)$};
	\node [below] at (a4') {$\cdots$};
	\node [below,scale=0.7] at (an') {$t^{-1}(c_{2n-2})$};
	% preimage of P_1
	\draw (a2) arc [radius=0.6,start angle=0,end angle=180];
	\draw (a3) arc [radius=0.25,start angle=0,end angle=180];
	\draw (an) arc [radius=0.4,start angle=0,end angle=180];
	\node [above,scale=0.7] at (a4) {$\cdots$};
	\draw (an) -- (3,4);
	\node [above] at (1.5,1.5) {$t^{-1}(P_1)$};
	% preimage of P_2
	\draw [dashed] (a1) arc [radius=0.6,start angle=0,end angle=180];
	\draw [dashed] (a2') arc [radius=0.25,start angle=0,end angle=180];
	\draw [dashed] (a5') arc [radius=0.4,start angle=0,end angle=180];
	\draw [dashed] (an') -- (-3,4);
	\node [above] at (-1.5,1.5) {$t^{-1}(P_2)$};
	\end{tikzpicture}
\end{center}
	\caption{Polygon on $\mathbb{H}$}
	\label{fig:PolygonOnH}
\end{figure}
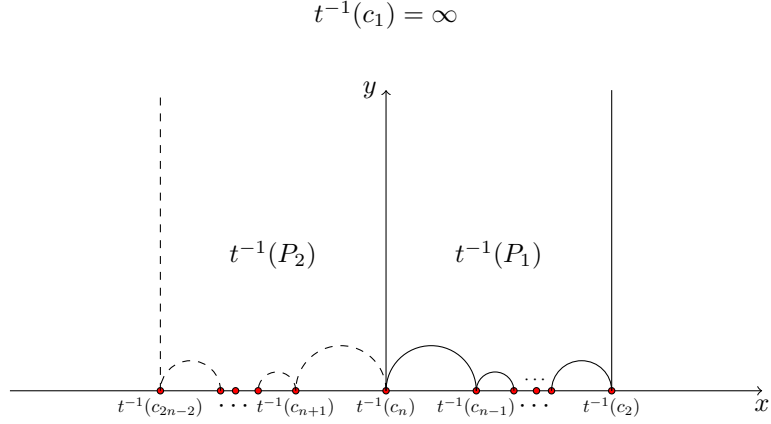
    \section{Deck Transformation Group and Expansion in $q_{_k}$}
\subsection{Deck Transformation Group and Generators}\label{subsection:deck-group-generators}
Let $x\in\punctured$ be a base point and $\gamma:[0,1]\rightarrow\punctured$ be a non-trivial loop with $\gamma(0)=\gamma(1)=x$. Assume $f:\mathbb{H}\rightarrow\punctured$ is a universal covering map. For an arbitrary pre-image $\tau_1\in f^{-1}(x)\subseteq\mathbb{H}$, the end point $\tau_2$ of the lift $\tilde{\gamma}$ with initial point $\tau_1$ is uniquely determined by $\gamma$ and the choice of $\tau_1$. The uniqueness implies that the non-trivial loop $\gamma$ induces a permutation on points of $f^{-1}(x)$, and such permutation induces an isomorphism of its covering space $\mathbb{H}$.
\begin{definition}
	We use $\Aut(f)$ to denote the deck transformation group of the universal covering $f:\mathbb{H}\rightarrow\punctured$. Equivalently, we have $f\circ \gamma(\tau)=f(\tau)$ for every element $\gamma\in\Aut(f)\subseteq\Aut(\mathbb{H})$ and every point $\tau\in\mathbb{H}$.
\end{definition}
%the group $\Aut(f)$ is a subgroup of $\Aut(\mathbb{H})$ such that   
There is a one-to-one correspondence between the deck transformation group $\Aut(f)$ and its fundamental group:
$$\Aut(f)\,\leftrightarrow\,\pi_1(\punctured), \qquad h\,\leftrightarrow\,\mbox{loop}.$$
Notice that the fundamental group $\pi_1(\punctured)$ is a free group generated by the $(n-1)$ loops, each of which circles around only one punctured point $a\in\{a_1,\cdots,a_n\}$. Therefore the corresponding relation implies that $\Aut(f)$ is generated by $(n-1)$ elements from $\Aut(\mathbb{H})$. We state the conclusion as the following statement.
\begin{theorem}\label{thm:fundalmental-group-first-thm}
	The deck transformation group $\Aut(f)$ of the universal covering $f:\mathbb{H}\rightarrow\punctured$ is a free group generated by $(n-1)$ elements from $\Aut(\mathbb{H})$, where each generator corresponds to a single loop in $\pi_1(\punctured)$.
\end{theorem}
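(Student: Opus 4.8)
The plan is to combine two standard ingredients: the identification of the deck transformation group of a universal cover with the fundamental group of the base, and the computation of $\pi_1(\punctured)$ as a free group of rank $n-1$. The first is general covering-space theory; the second is a concrete topological computation for the $n$-punctured sphere.

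First I would make the correspondence sketched in the preceding paragraph into an honest group isomorphism. Since $\mathbb{H}$ is simply connected, $f:\mathbb{H}\to\punctured$ is a universal covering, and the base $\punctured$ is path-connected, locally path-connected, and semi-locally simply connected (it is a smooth surface). Under these hypotheses I would fix a base point $x$ and a preimage $\tau_1\in f^{-1}(x)$, and use the lifting correspondence: each class $\gamma\in\pi_1(\punctured,x)$ lifts to a path starting at $\tau_1$ and ending at a uniquely determined point $\tau_2\in f^{-1}(x)$, and there is a unique $h\in\Aut(f)$ with $h(\tau_1)=\tau_2$. Sending $\gamma\mapsto h$ is the claimed map; checking that it is a bijective group homomorphism (compatible with composition of loops and composition of deck transformations) is routine and establishes $\Aut(f)\cong\pi_1(\punctured,x)$.

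Next I would compute $\pi_1(\punctured)$. Since $\mathbb{CP}^1$ is homeomorphic to the $2$-sphere, the space $\punctured$ is a sphere with $n$ punctures. Sending one puncture, say $a_n$, to $\infty$ by a Möbius transformation identifies it with $\mathbb{C}\setminus\{a_1,\ldots,a_{n-1}\}$, the plane with $n-1$ points removed, which deformation retracts onto a wedge of $n-1$ circles. Hence its fundamental group is free on $n-1$ generators, each generator represented by a loop encircling exactly one of $a_1,\ldots,a_{n-1}$. Transporting these generators back through the isomorphism of the previous step produces the $(n-1)$ generating elements of $\Aut(f)$, each attached to a single loop in $\pi_1(\punctured)$.

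The step I expect to require the most care is reconciling this count with the intuitively symmetric picture of ``$n$ loops, one around each of the $n$ punctures.'' On the sphere the product $\gamma_1\gamma_2\cdots\gamma_n$ of suitably oriented small loops around all $n$ punctures is null-homotopic, because collectively they bound a disk on the complementary side of the sphere; this single relation is exactly what lets one eliminate the loop around $a_n$ and recover precisely $n-1$ \emph{free} generators rather than $n$. The main obstacle is therefore the bookkeeping that passing from the plane to the sphere introduces this one relation and no others, which is precisely the content of the free-group computation above and must be stated carefully to justify the rank $n-1$.
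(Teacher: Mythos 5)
Your proposal is correct and follows essentially the same route as the paper: the paper likewise identifies $\Aut(f)$ with $\pi_1(\punctured)$ via the lifting correspondence and then invokes the fact that $\pi_1$ of the $n$-punctured sphere is free on $(n-1)$ loops, each encircling a single puncture. You merely supply details the paper leaves implicit (the deformation retraction onto a wedge of $n-1$ circles and the single spherical relation $\gamma_1\cdots\gamma_n\simeq 1$ explaining the rank), which is a faithful elaboration rather than a different argument.
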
 

Next we will discuss the properties of $\Aut(f)$. Let us recall some elementary definitions.

\begin{definition}\label{def:matrix-acts-on-upper}
	An element $\gamma=\left(\begin{array}{cc}
	a & b\\
	c & d
	\end{array}\right)\in\mbox{GL}_2(\mathbb{C})$ acts on $[z_0:z_1]\in\mathbb{CP}^1$ in the following sense
	\begin{equation}
	\gamma([z_0:z_1])=\left(\begin{array}{cc}
	a & b\\
	c & d
	\end{array}\right)\left(\begin{array}{c}
	z_0\\
	z_1
	\end{array}\right)=\left(\begin{array}{c}
	az_0+b\\
	cz_1+d
	\end{array}\right)=[az_0+b:cz_1+d].\notag
	\end{equation}
\end{definition}

\begin{remark}
	Let $\gamma,\gamma_1,\gamma_2\in\mbox{GL}_2(\mathbb{C})$, and $[z_0:z_1]\in\mathbb{CP}^1$.
	\begin{enumerate}
		\item $\gamma_2(\gamma_1([z_0:z_1]))=(\gamma_2\cdot\gamma_1)([z_0:z_1])$ because of the associativity of matrix multiplication.
		\item If we consider $z=\frac{z_0}{z_1}$ when $z_1\neq 0$, and $z=\infty$ when $z_1=0$, then we have $\gamma(z)=\frac{az+b}{cz+d}$ and $\gamma(\infty)=\frac{a}{c}$.
	\end{enumerate}
\end{remark}

\begin{definition}
	Let $z\in\mathbb{CP}^1$ and an element $\gamma\in\mbox{GL}_2(\mathbb{C})$ such that $\gamma(z)=z$, we say that $z$ is fixed under the action of $\gamma$, or, equivalently, $z$ is invariant under $\gamma$. 
\end{definition}

Elements in $\Aut(\mathbb{H})$ are the ones that we are interested in. On one side, we know that $\Aut(\mathbb{CP}^1)\subseteq\mbox{SL}_2(\mathbb{C})$, therefore we have  $\Aut(\mathbb{H})\subseteq\mbox{SL}_2(\mathbb{C})$. On the other side, the isomorphism of $\mathbb{H}$ keeps its boundary $\mathbb{R}$ invariant, this implies that $\Aut(\mathbb{H})=\mbox{SL}_2(\mathbb{R})$. We will focus on the group $\mbox{SL}_2(\mathbb{R})$ from now on.

\begin{prop}\label{prop:invariant-pt-infty-0}
	Let $\omega$ be a linear transformation in $\mbox{SL}_2(\mathbb{R})$, and $\tau\in\mathbb{H}$.
	\begin{enumerate}
		\item If $0,\infty$ are invariant under the action of $\omega$, i.e., $\omega(0)=0$ and $\omega(\infty)=\infty$, then $\omega(\tau)=\lambda \tau$ for some $0<\lambda\in\mathbb{R}$.
		\item If $\omega$ has only one invariant point $\infty$, i.e., $\omega(\infty)=\infty$, then $\omega(\tau)=\tau+k$ for some $k\in\mathbb{R}$.
		\item If $\omega$ has only one invariant point $i=\sqrt{-1}$, i.e., $\omega(i)=i$, then $\frac{\omega(\tau)-i}{\omega(\tau)+i}=e^{i\theta}\frac{\tau-i}{\tau+i}$ for some $\theta\in\mathbb{R}$.
	\end{enumerate}
\end{prop}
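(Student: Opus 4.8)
The plan is to carry out all three parts by passing to the explicit matrix form. Write $\omega=\bigl(\begin{smallmatrix}a&b\\c&d\end{smallmatrix}\bigr)$ with $a,b,c,d\in\mathbb{R}$ and $ad-bc=1$, and use the action $\omega(\tau)=\frac{a\tau+b}{c\tau+d}$ together with $\omega(\infty)=a/c$ recorded in the remark after Definition~\ref{def:matrix-acts-on-upper}. Each hypothesis on the fixed points then translates into linear conditions on the entries $a,b,c,d$, and the stated normal forms drop out by direct substitution; the only real points of care are the uniqueness argument in (2) and the algebraic bookkeeping in (3).

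For (1), the condition $\omega(\infty)=\infty$ forces $c=0$ and $\omega(0)=b/d=0$ forces $b=0$; then $ad-bc=1$ becomes $ad=1$, so $\omega(\tau)=(a/d)\tau=a^2\tau$, and $\lambda:=a^2>0$ since $a\in\mathbb{R}\setminus\{0\}$. For (2), $\omega(\infty)=\infty$ again gives $c=0$ and $d=1/a$, reducing $\omega$ to the affine map $\omega(\tau)=a^2\tau+ab$. Here I would invoke the hypothesis that $\infty$ is the \emph{only} fixed point: solving $\tau=a^2\tau+ab$ shows that whenever $a^2\neq1$ there is a second, finite fixed point $ab/(1-a^2)$, contradicting uniqueness. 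Hence $a^2=1$ and $\omega(\tau)=\tau+k$ with $k=ab\in\mathbb{R}$.

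For (3), imposing $\omega(i)=i$, i.e. $ai+b=i(ci+d)$, and separating real and imaginary parts gives $d=a$ and $c=-b$, so that $ad-bc=a^2+b^2=1$ and $\omega$ lies in the rotation subgroup $\mbox{SO}(2)$. Writing $a+ib=e^{i\phi}$, I would then use the identities $b-ia=-i(a+ib)$ and $b+ia=i(a-ib)$ to factor $\omega(\tau)-i=\frac{(a+ib)(\tau-i)}{a-b\tau}$ and $\omega(\tau)+i=\frac{(a-ib)(\tau+i)}{a-b\tau}$; dividing yields $\frac{\omega(\tau)-i}{\omega(\tau)+i}=\frac{a+ib}{a-ib}\cdot\frac{\tau-i}{\tau+i}=e^{2i\phi}\frac{\tau-i}{\tau+i}$, so the claim holds with $\theta=2\phi$. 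A more conceptual alternative is to conjugate $\omega$ by the Cayley transform $t$ of~\eqref{eq:cayley-transformation}: since $\omega$ fixes $i$, the conjugate $t\circ\omega\circ t^{-1}$ is an automorphism of $\mathbb{D}$ fixing the origin, hence a rotation $w\mapsto e^{i\theta}w$ by the Schwarz lemma, which is exactly the asserted identity after unwinding $t$.

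I expect no serious obstacle, since the computations are elementary. The subtlest step is the uniqueness argument in (2), where one must explicitly rule out the hyperbolic/scaling case by producing the second fixed point, and the sign bookkeeping in the factorizations of (3), which the two complex identities above make transparent.
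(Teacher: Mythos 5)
Your proposal is correct and follows essentially the same route as the paper: parts (1) and (2) proceed by the identical matrix reduction ($c=0$, then $b=0$ or the no-finite-fixed-point condition forcing $a/d=1$), and your Cayley-transform alternative for part (3) is precisely the paper's argument, which conjugates $\omega$ to a disk automorphism fixing the origin and concludes it is a rotation. Your additional explicit $\mbox{SO}(2)$ computation for part (3), including the factorizations $\omega(\tau)-i=\frac{(a+ib)(\tau-i)}{a-b\tau}$ and $\omega(\tau)+i=\frac{(a-ib)(\tau+i)}{a-b\tau}$, checks out and makes the relation $\theta=2\phi$ explicit, which the paper leaves implicit.
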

\begin{proof}
	Assume $\omega=\left(\begin{array}{cc}
	a & b\\
	c & d
	\end{array}\right)\in\mbox{SL}_2(\mathbb{R})$. The condition $\omega(\infty)=\infty$ implies that $\frac{a}{c}=\infty$, so we get $c=0$ and $a\neq 0$ since $ad-bc=1$. And $\omega(0)=0$ implies $\frac{b}{d}=0$, so $b=0$ and $d\neq 0$. 
	\begin{enumerate}
      \item If both of $0$ and $\infty$ are invariant under $\omega$, we have $\omega=\left(\begin{array}{cc}
                 a & 0\\
                 0 & d
                 \end{array}\right)$ and $\omega(z)=\frac{a}{d}z$, specially $\lambda=\frac{a}{d}>0$ since $ad=1>0$.\\
      \item If $\infty$ is the only invariant point under the action of $\omega$, we have $\omega=\left(\begin{array}{cc}
                  a & b\\
                  0 & d
               \end{array}\right)$ where $b\neq 0$. Any point other than $\infty$ are not invariant means that the equation $\omega(z)=\frac{a}{d}z+\frac{b}{d}=z$ has no solution, so $\frac{a}{d}=1$. Let $\frac{b}{d}=k$, we conclude $\omega(z)=z+k$.\\
      \item For the last case, the composition $t\circ\omega\circ t^{-1}$ is an automorphism of the unit disk $\mathbb{D}$ with $0$ being fixed, where $t$ denotes the Cayley transformation \eqref{eq:cayley-transformation}. Therefore we have relation $t\circ\omega\circ t^{-1}(\tilde{z})=e^{i\theta}\tilde{z}$ for $\tilde{z}\in\mathbb{D}$. Let $\tilde{z}=t(z)$, the following equation
      $$t\circ\omega\circ t^{-1}(\tilde{z})=t\circ\omega\circ t^{-1}\circ t(z)=t\circ\omega(z)=e^{t\theta}t(z)$$
      holds, where the last equality implies $\frac{\omega(\tau)-i}{\omega(\tau)+i}=e^{i\theta}\frac{\tau-i}{\tau+i}$.
	\end{enumerate}
\end{proof}

\begin{prop}\label{prop:Classification-SL2}
    For an element $\omega\in\mbox{SL}_2(\mathbb{C})$, it fits in one of the following three situations.
	\begin{enumerate}
		\item If $\omega$ has two different invariant points on the boundary $\mathbb{R}\cup\{\infty\}$, it is called of \textit{hyperbolic} type. Furthermore, if $r_1<r_2\in\mathbb{R}$ are the two invariant points, then the transformation has expression $$\frac{\omega(z)-r_2}{\omega(z)-r_1}=\lambda\frac{z-r_2}{z-r_1},\quad\mbox{for some }0<\lambda\in\mathbb{R}.$$
		\item If $\omega$ has only one invariant point on the boundary $\mathbb{R}\cup\{\infty\}$, it is called of \textit{parabolic} type. Furthermore, when the invariant point $r\neq\infty$, it has expression 
		$$-\frac{1}{\omega(z)-r}=-\frac{1}{z-r}+k,\quad\mbox{for some } k\in\mathbb{R}.$$
		\item If $\omega$ has only one invariant point $r\in\mathbb{H}$ in the interior of $\mathbb{H}$, it is called of \textit{elliptic} type. It has expression $$\frac{\omega(z)-r}{\omega(z)-\overline{r}}=e^{i\theta}\frac{z-r}{z-\overline{r}},\quad \mbox{for some }\theta\in\mathbb{R}.$$
	\end{enumerate}
\end{prop}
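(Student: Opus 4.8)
The plan is to reduce everything to the fixed-point equation and then to the three already-normalized cases of Proposition \ref{prop:invariant-pt-infty-0} by conjugation. Since the three situations are distinguished solely by the location of the fixed points, I would first show they are exhaustive and mutually exclusive. Writing $\omega=\left(\begin{smallmatrix} a&b\\ c&d\end{smallmatrix}\right)$ with $ad-bc=1$ and, per the discussion preceding the statement where $\Aut(\mathbb{H})=\mathrm{SL}_2(\mathbb{R})$, real entries, the equation $\omega(z)=z$ becomes $cz^2+(d-a)z-b=0$. When $c\neq 0$ this is a genuine quadratic whose discriminant simplifies, using $ad-bc=1$, to $(d-a)^2+4bc=(a+d)^2-4=(\operatorname{tr}\omega)^2-4$. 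When $c=0$ the point $\infty$ is always fixed and the remaining equation $(d-a)z=b$ is linear. Thus a non-identity $\omega$ has at most two fixed points on $\mathbb{CP}^1$, and the sign of $(\operatorname{tr}\omega)^2-4$ controls the trichotomy: positive gives two distinct real roots (hyperbolic), zero gives a single real double root (parabolic), and negative gives a complex-conjugate pair exactly one of which lies in $\mathbb{H}$ (elliptic). The $c=0$ subcases fall into the hyperbolic/parabolic dichotomy under the same trace criterion, since there $(\operatorname{tr}\omega)^2-4=(a-d)^2$.

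Next I would establish the three explicit formulas by conjugating to the standard forms of Proposition \ref{prop:invariant-pt-infty-0}. For the hyperbolic case, set $g(z)=\frac{z-r_2}{z-r_1}$, a real M\"obius transformation sending $r_2\mapsto 0$ and $r_1\mapsto\infty$; then $g\omega g^{-1}\in\mathrm{SL}_2(\mathbb{R})$ fixes both $0$ and $\infty$, so Proposition \ref{prop:invariant-pt-infty-0}(1) gives $g\omega g^{-1}(\tilde z)=\lambda\tilde z$ with $\lambda>0$. Substituting $\tilde z=g(z)$ and unwinding yields $\frac{\omega(z)-r_2}{\omega(z)-r_1}=\lambda\frac{z-r_2}{z-r_1}$. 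For the parabolic case with finite fixed point $r$, the real map $g(z)=-\frac{1}{z-r}$, which has determinant $1$, sends $r\mapsto\infty$, so $g\omega g^{-1}$ fixes only $\infty$; Proposition \ref{prop:invariant-pt-infty-0}(2) gives $g\omega g^{-1}(\tilde z)=\tilde z+k$, and substituting $\tilde z=g(z)$ produces $-\frac{1}{\omega(z)-r}=-\frac{1}{z-r}+k$. For the elliptic case, I would conjugate by $g(z)=\frac{z-r}{z-\overline{r}}$, a biholomorphism $\mathbb{H}\to\mathbb{D}$ sending the interior fixed point $r$ to $0$; then $g\omega g^{-1}$ is an automorphism of $\mathbb{D}$ fixing $0$, hence a rotation $\tilde z\mapsto e^{i\theta}\tilde z$ by the Schwarz lemma (exactly the argument used in the proof of Proposition \ref{prop:invariant-pt-infty-0}(3)), and substituting $\tilde z=g(z)$ gives $\frac{\omega(z)-r}{\omega(z)-\overline{r}}=e^{i\theta}\frac{z-r}{z-\overline{r}}$.

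The main obstacle I expect is the bookkeeping of the trichotomy rather than the normal-form computations, which are mechanical once the conjugating map is chosen. Specifically, I would need to argue carefully that the three cases are genuinely exhaustive and disjoint: that a non-identity element has no more than two fixed points, that the $c=0$ cases slot correctly into the hyperbolic/parabolic split via the same trace criterion, and above all that in the negative-discriminant case the two complex-conjugate roots separate so that exactly one lies in the open upper half plane $\mathbb{H}$ — which is what licenses calling $r\in\mathbb{H}$ \emph{the} interior fixed point and makes the elliptic normal form well defined. A secondary point to verify is that the conjugating maps $g$ have the required properties: real coefficients and determinant $1$ in the hyperbolic and parabolic cases so that Proposition \ref{prop:invariant-pt-infty-0} applies verbatim, and $g(\mathbb{H})=\mathbb{D}$ in the elliptic case so that the Schwarz-lemma step is justified.
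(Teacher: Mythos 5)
Your proposal is correct and follows essentially the same route as the paper: the paper's proof (stated as ``elementary, directly follows from Proposition \ref{prop:invariant-pt-infty-0},'' with the details matching the references to Shimura and Nevanlinna) likewise classifies the fixed points via the discriminant of $cz^2+(d-a)z-b=0$ and then conjugates by the maps $z\mapsto\frac{z-r_2}{z-r_1}$, $z\mapsto-\frac{1}{z-r}$, and $z\mapsto\frac{z-r}{z-\overline{r}}$ to reduce to the three normalized cases of Proposition \ref{prop:invariant-pt-infty-0}. Your extra care about exhaustiveness (the $c=0$ subcases, the trace criterion $(\operatorname{tr}\omega)^2-4$, and exactly one conjugate root lying in $\mathbb{H}$) is a sound refinement of the same argument rather than a different approach.
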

\begin{proof}
	The proof is elementary. It directly follows from Proposition \ref{prop:invariant-pt-infty-0}. Also see \cite[p. 7, Proposition 1.13]{ShimuraIntroToArith} and \cite[p. 8, section 1.6]{NevaROLF}.
\end{proof}

Next, we will show that each generator in Theorem \ref{thm:fundalmental-group-first-thm} has to be of \textit{parabolic} type. For a universal covering space $f:\mathbb{H}\rightarrow\punctured$, it is equivalent to say that $f$ is biholomorphic. Suppose an element $\gamma\in\Aut(f)$ corresponds to a generating loop $l$ in $\pi_1(\punctured)$, where $l:[0,1]\rightarrow\punctured$ and $l$ isolates only one puncture $a\in\{a_1,\ldots,a_n\}$. Fix a point $\tau_0$ in the fiber $f^{-1}(l(0))$, the lift $\tilde{l}(t)$ of $l(t)$ with initial point $\tau_0$ is uniquely determined and so as its terminal point $\tilde{l}(1)=\tau_1$. Similarly,
the lift $\widetilde{l^n}$ of $l^n=l\ast\cdots\ast l$, is uniquely determined by the fixed initial point $\tau_0$, where $\ast$ means $(l\ast l)(t)=\left\{\begin{array}{ll}
	l(2t), & 0\le t<\frac{1}{2},\\
	l(2t-1), & \frac{1}{2}\le t\le 1.
	\end{array}\right.$
  Denote the terminal point $\tau_n=\widetilde{l^n}(1)$, meanwhile $\tau_n\in f^{-1}(l(0))$. By the corresponding relation between $l$ and $\gamma$, we have $\tau_n=(l\ast\cdots\ast l)^{\sim}(1)=\gamma\circ\cdots\circ \gamma(\tau_0)=\gamma^n(\tau_0)$. We will prove that this $\gamma$ can not be of \textit{hyperbolic} type by analyzing its monodormy behavior. Similarly, we can conclude that $\gamma$ can not be of \textit{elliptic} type either. The proofs of the following propositions are elementary, but it is helpful to understand the uniformizing function at singularities. For this purpose, I will include one of the proofs.

\begin{prop}\label{prop:generator-not-hyper}
	The generator is not of \textit{hyperbolic} type.
\end{prop}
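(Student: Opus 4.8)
The plan is to argue by contradiction, combining the classification in Proposition~\ref{prop:Classification-SL2} with the local structure of $f$ over a single puncture. Suppose the generator $\gamma$, which corresponds to the loop $l$ isolating exactly one puncture $a\in\{a_1,\dots,a_n\}$, were of \emph{hyperbolic} type. By Proposition~\ref{prop:Classification-SL2}(1) it has two distinct fixed points $r_1\neq r_2$ on $\mathbb{R}\cup\{\infty\}$ and a multiplier $\lambda>0$, $\lambda\neq 1$, with
\[
\frac{\gamma(\tau)-r_2}{\gamma(\tau)-r_1}=\lambda\,\frac{\tau-r_2}{\tau-r_1}.
\]
Iterating and recalling from the preceding discussion that $\tau_n=\gamma^{n}(\tau_0)$ gives $(\tau_n-r_2)/(\tau_n-r_1)=\lambda^{n}(\tau_0-r_2)/(\tau_0-r_1)$, so letting $n\to+\infty$ and $n\to-\infty$ shows that the orbit $\{\tau_n\}\subset f^{-1}(x)$ accumulates at \emph{both} of the distinct boundary points $r_1$ and $r_2$. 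This is the signature of hyperbolicity that I will contradict.

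Next I would isolate the monodromy around $a$. Choose $\varepsilon>0$ so small that $U=\{0<|w-a|<\varepsilon\}$ contains $l$ but no other puncture, and let $\Omega$ be the connected component of $f^{-1}(U)$ containing $\tau_0$. Since $f\circ\gamma=f$, the map $\gamma$ permutes the components of $f^{-1}(U)$; as the lift $\tilde l$ joins $\tau_0$ to $\tau_1=\gamma(\tau_0)$ inside $f^{-1}(U)$, we get $\gamma(\Omega)=\Omega$ and, inductively, $\tau_n\in\Omega$ for all $n$. Hence $f|_{\Omega}\colon\Omega\to U$ is a covering of the punctured disk whose deck group contains $\langle\gamma\rangle$. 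Because $\Aut(f)$ is free on the $(n-1)$ generators of Theorem~\ref{thm:fundalmental-group-first-thm} and $\gamma$ is exactly the generator attached to $l$, which generates $\pi_1(U)\cong\mathbb{Z}$, the stabilizer of $\Omega$ is precisely $\langle\gamma\rangle$, so $f|_{\Omega}$ is the \emph{universal} covering of $U$.

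The contradiction then comes from the conformal type of this covering. Taking $a=0$, the universal covering of $\{0<|w|<\varepsilon\}$ is modelled by $\zeta\mapsto\varepsilon\,e^{i\zeta}$ on $\mathbb{H}$, whose deck group is generated by the translation $\zeta\mapsto\zeta+2\pi$ — a \emph{parabolic} fixing the single ideal point $\infty$, by Proposition~\ref{prop:invariant-pt-infty-0}(2). Equivalently, the quotient of $\mathbb{H}$ by an infinite cyclic parabolic group is a punctured disk (an annulus of infinite modulus), while the quotient by a cyclic \emph{hyperbolic} group is a genuine annulus $\{1<|z|<R\}$ of finite modulus. Since $f|_{\Omega}$ identifies the end $\Omega/\langle\gamma\rangle$ conformally with $U$, an end of infinite modulus, the generator $\gamma$ must be parabolic and the orbit $\{\tau_n\}$ can accumulate at only the one cusp — contradicting the two distinct limits $r_1\neq r_2$ produced in the first paragraph. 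This proves that $\gamma$ is not of hyperbolic type.

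The step I expect to be the main obstacle is the identification underlying the second and third paragraphs: one must verify that the stabilizer of the component $\Omega$ is exactly $\langle\gamma\rangle$ — so that $f|_{\Omega}$ is genuinely the \emph{universal} cover of the punctured disk and not an intermediate annular cover — and then invoke the correct conformal invariant (the infinite modulus of a cusp, versus the finite modulus of a hyperbolic annulus) to force parabolicity. Both points hinge on $U$ isolating a single puncture and on the free generation of Theorem~\ref{thm:fundalmental-group-first-thm}; once the punctured-disk model is in place, the remaining computation is the elementary limit of the first paragraph.
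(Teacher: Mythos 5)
Your argument is correct in substance but takes a genuinely different route from the paper's. The paper proceeds function-theoretically: assuming $\gamma$ hyperbolic with fixed points $r_1,r_2$ and multiplier $\lambda$, it constructs the function $\Phi(x)=\exp\left\{\frac{2\pi i}{\log\lambda}\log\left(\frac{f^{-1}(x)-r_2}{f^{-1}(x)-r_1}\right)\right\}$, observes that the relation $\frac{\tau_n-r_2}{\tau_n-r_1}=\lambda^n\frac{\tau_0-r_2}{\tau_0-r_1}$ makes $\Phi$ single-valued on a punctured neighborhood of $a$ (the factor $\frac{2\pi i}{\log\lambda}$ exactly kills the monodromy), shows $|\Phi|$ is pinched between $\exp\{-\frac{2\pi^2}{\log\lambda}\}$ and $1$ on the chosen branch, and concludes by the removable-singularity theorem that $f^{-1}$ extends regularly at $a$, contradicting that $a$ is an essential singularity. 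You instead argue via covering-space theory and conformal moduli: your identification of the stabilizer of the component $\Omega$ is sound, since it corresponds to the image of $\pi_1(U)\to\pi_1(\punctured)$, which is infinite cyclic and maps isomorphically by the freeness in Theorem \ref{thm:fundalmental-group-first-thm}, so $f|_{\Omega}$ is indeed the universal cover of the punctured disk $U$ with deck group exactly $\langle\gamma\rangle$. A pleasant by-product of your method is that it forces parabolicity outright, subsuming Proposition \ref{prop:generator-not-elliptic} as well (elliptic elements have fixed points in $\mathbb{H}$ and so cannot act as deck transformations anyway), whereas the paper runs a separate parallel argument for the elliptic case; on the other hand, the paper's $\Phi$-construction is elementary and directly sets up the uniformizing functions $\Phi_j$ reused in section \ref{section:qk-expansion-metric-formula}.

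Two repairs are needed in your write-up. Cosmetically, shrinking $\varepsilon$ does not make $U$ contain $l$; rather, homotope $l$ into a small punctured disk about $a$ (this replaces $\gamma$ by a conjugate, and conjugates of hyperbolics are hyperbolic, so nothing is lost). More substantively, your final step as written compares $U$ with quotients of $\mathbb{H}$, but $\Omega$ is in general a \emph{proper} $\gamma$-invariant simply connected subdomain of $\mathbb{H}$, so $\Omega/\langle\gamma\rangle$ is only an essential subannulus of $\mathbb{H}/\langle\gamma\rangle$, and from $\Omega/\langle\gamma\rangle\cong U$ alone you only get parabolicity of the conjugated generator on the uniformization of $\Omega$, not of $\gamma$ acting on $\mathbb{H}$. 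The bridge is the monotonicity of modulus under essential inclusion: if $\gamma$ were hyperbolic with translation length $\ell=\log\lambda>0$, then $\operatorname{mod}(\Omega/\langle\gamma\rangle)\le\operatorname{mod}(\mathbb{H}/\langle\gamma\rangle)=\pi/\ell<\infty$, contradicting $\operatorname{mod}(U)=\infty$. Equivalently, by the Schwarz--Pick contraction $\Omega\hookrightarrow\mathbb{H}$, every loop in $U\cong\Omega/\langle\gamma\rangle$ around the puncture has hyperbolic length at least $\ell>0$, while the complete hyperbolic metric on a punctured disk admits arbitrarily short such loops. With that one line added, your proof is complete.
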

%\begin{comment}
	\begin{proof}
	Suppose $l$ is a loop in $\punctured$ which isolates a puncture $a\in\{a_1,\ldots,a_n\}$. $\gamma$ is the corresponding element in $\Aut(f)$, where $f$ is the universal covering map. If $\gamma$ is of \textit{hyperbolic} type, then there exists $r_1<r_2\in\mathbb{R}$ which are the two invariant points of $\gamma$, and let $\lambda\in\mathbb{R}^+$ be the constant in Proposition \ref{prop:Classification-SL2} such that the relation
	$$\frac{\gamma(\tau)-r_2}{\gamma(\tau)-r_1}=\lambda\frac{\tau-r_2}{\tau-r_1},\qquad\tau\in\mathbb{H}$$
	is satisfied. Therefore the following equality 
	$$
	\frac{\tau_{n+1}-r_2}{\tau_{n+1}-r_1}=\frac{\gamma(\tau_n)-r_2}{\gamma(\tau_n)-r_1}=\lambda\frac{\tau_n-r_2}{\tau_n-r_1}$$
	implies relation
	$$\frac{\widetilde{l^{n+1}}(1)-r_2}{\widetilde{l^{n+1}}(1)-r_1}=\lambda\frac{\widetilde{l^n}(1)-r_2}{\widetilde{l^n}(1)-r_1}=\lambda^n\frac{\widetilde{l}(1)-r_2}{\widetilde{l}(1)-r_1},$$	
	which leads to the following equation
	\begin{equation}\label{eq:generator-not-hyper}
	\frac{\tau_n-r_2}{\tau_n-r_1}=\lambda^n\frac{\tau_0-r_2}{\tau_0-r_1}.
	\end{equation}
	On the branch $0<\arg\left(\frac{\tau-r_2}{\tau-r_1}\right)<\pi$, the following calculation
	\begin{align}\label{eq:not-hyperbolic-1}
	\exp\{\frac{2\pi i}{\log\lambda}\cdot\log\frac{\tau_n-r_2}{\tau_n-r_1}\}
	&=\exp\{\frac{2\pi i}{\log\lambda}\cdot\log\left(\lambda^n\cdot\frac{\tau_0-r_2}{\tau_0-r_1}\right)\},\notag\\
	&=\exp\{\frac{2\pi i}{\log\lambda}\cdot n\log\lambda\}\cdot\exp\{\frac{2\pi i}{\log\lambda}\cdot\log\frac{\tau_0-r_2}{\tau_0-r_1}\},\notag\\
	&=\exp\{\frac{2\pi i}{\log\lambda}\cdot\log\frac{\tau_0-r_2}{\tau_0-r_1}\}
	\end{align}
	holds since $\lambda\in\mathbb{R}^+$. Define a function 
	$$\Phi(x)=\exp\left\{\frac{2\pi i}{\log\lambda}\cdot\log\left(\frac{f^{-1}(x)-r_2}{f^{-1}(x)-r_1}\right)\right\},\quad U_l-\{a\}\rightarrow\mathbb{C},$$
	where $U_{l}$ is the open set bounded by $l$ with $a$ inside. Notice that the value defined by the expression $\exp\left\{\frac{2\pi i}{\log\lambda}\cdot\log\left(\frac{f^{-1}\circ l(0)-r_2}{f^{-1}\circ l(0)-r_1}\right)\right\}$ is single valued over the fiber of $l(0)$ due to equation \eqref{eq:not-hyperbolic-1}. Applying the same argument on every point $x\in U_{l}-\{a\}$, we conclude that $\Phi(x)$ is a well-defined single valued function on $U_{l}-\{a\}$ over the branch $0<\arg \frac{f^{-1}(x)-r_2}{f^{-1}(x)-r_1}<\pi$. We estimate $|\Phi(x)|$ in $U_{l}-\{a\}$ as the following,
	\begin{align}
	|\Phi(x)| &=\left|\exp\left\{\frac{2\pi i}{\log\lambda}\cdot\log\left(\frac{f^{-1}(x)-r_2}{f^{-1}(x)-r_1}\right)\right\}\right|,\notag\\
	&=\left|\exp\left\{\frac{2\pi i}{\log\lambda}\right\}\cdot\log\left|\frac{f^{-1}(x)-r_2}{f^{-1}(x)-r_2}\right|\right|\cdot\exp\left\{-\frac{2\pi}{\log\lambda}\cdot\arg\frac{f^{-1}(x)-r_2}{f^{-1}(x)-r_1}\right\},\notag\\
	&=\exp\left\{-\frac{2\pi}{\log\lambda}\cdot\arg\frac{f^{-1}(x)-r_2}{f^{-1}(x)-r_1}\right\}.\label{eq:not-hyperbolic-2}
	\end{align}
	Over the branch $0<\arg \frac{f^{-1}(x)-r_2}{f^{-1}(x)-r_1}<\pi$, equation \eqref{eq:not-hyperbolic-2} implies that $|\Phi(x)|$ is bounded between two positive values $\exp\left\{-\frac{2\pi^2}{\log\lambda}\right\}$ and $1=\exp\{-\frac{2\pi}{\log\lambda}\cdot 0\}$. By Cauchy-Riemann theorem, $a$ is a removable singularity and $\Phi(a)\neq 0$, thus the exponential of $\Phi(x)$ will be regular at $x=a$, as well as  $f^{-1}(x)$. This contradicts with the fact that $a$ is an essential singularity.
	\end{proof}
%\end{comment}

\begin{prop}\label{prop:generator-not-elliptic}
	The generator is not of \textit{elliptic} type.
\end{prop}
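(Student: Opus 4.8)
The plan is to mirror the proof of Proposition \ref{prop:generator-not-hyper} almost verbatim, replacing the real dilation factor $\lambda$ by a rotation. Assume for contradiction that the generator $\gamma\in\Aut(f)$ associated with a loop $l$ isolating the puncture $a$ is of \emph{elliptic} type. By Proposition \ref{prop:Classification-SL2}, $\gamma$ fixes a single interior point $r\in\mathbb{H}$ and satisfies $\frac{\gamma(\tau)-r}{\gamma(\tau)-\overline{r}}=e^{i\theta}\frac{\tau-r}{\tau-\overline{r}}$ for some real $\theta\not\equiv 0 \pmod{2\pi}$. Exactly as in the hyperbolic case, iterating the loop and using $\tau_n=\gamma^n(\tau_0)$ gives $\frac{\tau_n-r}{\tau_n-\overline{r}}=e^{in\theta}\frac{\tau_0-r}{\tau_0-\overline{r}}$, so the unimodular factor $e^{in\theta}$ now records the monodromy that $\lambda^n$ played before.

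Setting $w(\tau)=\frac{\tau-r}{\tau-\overline{r}}$ (the Cayley-type map $\mathbb{H}\to\mathbb{D}$ with $r\mapsto 0$), I would define $\Phi(x)=\exp\{\frac{2\pi}{\theta}\log w(f^{-1}(x))\}$ on $U_l\setminus\{a\}$, where $U_l$ is the disk bounded by $l$ with $a$ inside. The coefficient $\frac{2\pi}{\theta}$ is chosen to play the role of $\frac{2\pi i}{\log\lambda}$: circling $a$ once sends $w\mapsto e^{i\theta}w$, i.e. adds $i\theta$ to $\log w$, and $\frac{2\pi}{\theta}\cdot i\theta=2\pi i$, so $\Phi$ is single-valued on $U_l\setminus\{a\}$. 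Since $f^{-1}(x)\in\mathbb{H}$ forces $0<|w|<1$, one gets $|\Phi(x)|=|w|^{2\pi/\theta}<1$, so $\Phi$ is a bounded single-valued holomorphic function and, by the removable-singularity theorem, extends holomorphically across $a$ with nonzero limiting modulus.

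The contradiction should come from the rotational nature of the monodromy, which is the one genuinely new feature compared with Proposition \ref{prop:generator-not-hyper}: there the factor $\lambda$ was real, so $\arg w$ did not wind and the essential-singularity argument was needed, whereas here circling $a$ rotates $w$ by the nonreal $e^{i\theta}$. Tracking $\arg\Phi$ around $l$ and reconciling the rotation by $e^{i\theta}$ with the single-valuedness and the nonvanishing holomorphic extension of $\Phi$ at $a$ yields a winding-number contradiction: a function that extends holomorphically across $a$ with nonzero modulus there cannot have nonzero winding about $0$ along a small loop around $a$. I expect this winding bookkeeping to be the main obstacle, since one must carefully track $\arg w$ along the \emph{non-closed} lift of $l$ (the path from $\tau_0$ to $\gamma(\tau_0)$) and confirm that the count is genuinely nonzero. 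The cleanest route bypasses this: the identity $|w(\tau_n)|=|w(\tau_0)|$ shows that the whole orbit $\{\gamma^n(\tau_0)\}$ lies on a fixed hyperbolic circle about $r$, a compact subset of $\mathbb{H}$; if $\theta/2\pi\notin\mathbb{Q}$ this infinite orbit accumulates in $\mathbb{H}$, while if $\theta/2\pi=p/q$ then $\gamma^q=\mathrm{id}$. Either alternative contradicts the fact that $\Aut(f)$, being the deck group of the covering $f:\mathbb{H}\to\punctured$ of Theorem \ref{thm:fundalmental-group-first-thm}, acts freely and properly discontinuously, so no nontrivial element can fix an interior point of $\mathbb{H}$.
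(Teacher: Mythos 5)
Your final argument is correct, but it is a genuinely different route from the one the paper takes: the paper's proof of this proposition is a one-line deferral ("similar to the proof of Proposition \ref{prop:generator-not-hyper}, see Nevanlinna"), i.e.\ it intends the same function-theoretic monodromy argument, building a single-valued uniformizing function $\Phi$ near $a$ and deriving a contradiction with the essential singularity. Your attempt to mirror that proof verbatim stalls exactly where you say it does, and the gap is real: with $\Phi=\exp\{\frac{2\pi}{\theta}\log w(f^{-1}(x))\}$ you only get the upper bound $|\Phi|=|w|^{2\pi/\theta}<1$, whereas in the hyperbolic case \emph{both} bounds on $|\Phi|$ came from $0<\arg\frac{f^{-1}(x)-r_2}{f^{-1}(x)-r_1}<\pi$; here the orbit data sits in the modulus $|w|$ rather than the argument (the roles of $\arg$ and $\log|\cdot|$ swap under hyperbolic$\leftrightarrow$elliptic), there is no a priori lower bound, so your claim of "nonzero limiting modulus" is unjustified, and if $\Phi(a)=0$ the winding-number contradiction evaporates. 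Your fallback, however, is complete and in fact simpler than you make it: an elliptic $\gamma$ fixes $r\in\mathbb{H}$ directly, and a nontrivial deck transformation of a covering has no fixed point (uniqueness of lifts), so the contradiction is immediate without the rational/irrational dichotomy --- though that dichotomy (torsion contradicting the freeness of $\Aut(f)$ from Theorem \ref{thm:fundalmental-group-first-thm}, versus orbit accumulation on a compact hyperbolic circle contradicting discreteness of the fiber) is also valid and is essentially the classical argument for excluding elliptic elements from torsion-free discrete groups. What each approach buys: the paper's monodromy argument is uniform with the hyperbolic case and explains the local uniformizing coordinate at the puncture, which the paper reuses in section \ref{section:qk-expansion-metric-formula}; your covering-space argument is shorter, avoids the delicate branch bookkeeping entirely, and isolates the one feature (an interior fixed point) that makes the elliptic case degenerate.
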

\begin{proof}
It is similar to the proof of Proposition \ref{prop:generator-not-hyper} (see \cite[p. 16-17, section 3.2]{NevaROLF}).
\end{proof}

From Propositions \ref{prop:generator-not-hyper} and \ref{prop:generator-not-elliptic}, we have the following conclusion.

\begin{theorem}\label{thm:generator-are-parabolic}
	The deck transformation group $\Aut(f)$ of a universal covering space $f:\mathbb{H}\rightarrow\punctured$ is generated by $(n-1)$ \textit{parabolic} elements in $\mbox{SL}_2(\mathbb{R})$.
\end{theorem}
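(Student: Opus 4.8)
The plan is to obtain this theorem directly by assembling the structural result of Theorem~\ref{thm:fundalmental-group-first-thm} with the two exclusion propositions just established. First I would invoke Theorem~\ref{thm:fundalmental-group-first-thm} to fix a free generating set $\{\gamma_1,\ldots,\gamma_{n-1}\}$ of $\Aut(f)$, where each $\gamma_j$ is the deck transformation corresponding, under the isomorphism $\Aut(f)\leftrightarrow\pi_1(\punctured)$, to a loop $l_j$ that isolates exactly one puncture $a_j\in\{a_1,\ldots,a_n\}$.

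Next, since $\Aut(\mathbb{H})=\mbox{SL}_2(\mathbb{R})$, every $\gamma_j$ lies in $\mbox{SL}_2(\mathbb{R})\subseteq\mbox{SL}_2(\mathbb{C})$, so the trichotomy of Proposition~\ref{prop:Classification-SL2} applies and each $\gamma_j$ is of \emph{hyperbolic}, \emph{parabolic}, or \emph{elliptic} type. Here I would note that $\gamma_j$ is not the identity M\"obius transformation, since $l_j$ is a non-trivial element of the free group $\pi_1(\punctured)$; this is what guarantees that $\gamma_j$ has one or two fixed points on $\mathbb{CP}^1$ and that the three named types are genuinely exhaustive for it.

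Then I would apply Proposition~\ref{prop:generator-not-hyper} and Proposition~\ref{prop:generator-not-elliptic} to each $\gamma_j$. Because every $\gamma_j$ arises precisely as a generator corresponding to a loop isolating a single puncture, which is exactly the hypothesis under which those two propositions are stated, neither the hyperbolic nor the elliptic case can occur for any $\gamma_j$. The only surviving possibility is that each of the $(n-1)$ generators is parabolic, which is the assertion of the theorem.

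The deduction itself is immediate once the preceding results are in hand; the genuine obstacle lives upstream, in the monodromy argument of Proposition~\ref{prop:generator-not-hyper}. There the crux is to construct, on the branch $0<\arg\frac{\tau-r_2}{\tau-r_1}<\pi$, the auxiliary function $\Phi$ whose single-valuedness over the fiber of $l(0)$ forces it to be bounded away from $0$ and $\infty$ near the puncture, making $a_j$ a removable singularity and hence contradicting the essential singularity of $f^{-1}$ at $a_j$; the elliptic case is excluded by the parallel argument. In writing the present proof I would be careful to cite both propositions uniformly for each generator $\gamma_j$, rather than for a single loop, so that the conclusion covers the entire generating set of $\Aut(f)$.
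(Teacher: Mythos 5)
Your proposal is correct and follows exactly the paper's route: the theorem is obtained there as an immediate consequence of Theorem~\ref{thm:fundalmental-group-first-thm} together with Propositions~\ref{prop:generator-not-hyper} and~\ref{prop:generator-not-elliptic}, with the real work living in the monodromy argument you accurately summarize. Your added remark that each $\gamma_j$ is non-trivial (so the trichotomy of Proposition~\ref{prop:Classification-SL2} is exhaustive for it) is a small point the paper leaves implicit, and it is a welcome clarification rather than a deviation.
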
%}                                                                                                                                                                                                                                                                                                                                                                                                                                                                                                    
    \subsection{The Expansion and Metric Formula}\label{section:qk-expansion-metric-formula}
Suppose $\gamma_1,\ldots,\gamma_{n-1}$ are the $(n-1)$ \textit{parabolic} generators corresponding to the $(n-1)$ generating loops of $\pi_1(\punctured)$. Theorem \ref{thm:generator-are-parabolic} shows that each of them satisfies the following relation
\begin{equation*}
-\frac{1}{\gamma_j(\tau)-r_j}=-\frac{1}{\tau-r_j}+k_j,\qquad \mbox{when $r_j\neq\infty$},
\end{equation*}
or 
\begin{equation*}
\gamma_j(\tau)=\tau+k_j,\qquad\mbox{when $r_j=\infty$,}
\end{equation*}
where $r_j$ and $k_j$ are the constants corresponding to $\gamma_j$, $j=1,\ldots,n-1$, in Proposition \ref{prop:Classification-SL2}. Let $a_j$ be the singularity that corresponds to $\gamma_j$, the function
\begin{equation*}
	\Phi_j(x)=\left\{\begin{array}{ll}
	e^{-\frac{2\pi i}{k_j}\frac{1}{f^{-1}(x)-r_j}}, & \mbox{when }r_j\neq\infty,\\
	e^{\frac{2\pi i}{k_j}f^{-1}(x)}, & \mbox{when }r_j=\infty,
	\end{array}\right.\qquad U_l-\{a_j\}\rightarrow\punctured
\end{equation*}
is biholomorphic, where $U_l-\{a_j\}$ is a punctured neighborhood that is defined similar to the one in Proposition \ref{prop:generator-not-hyper}. Such function $\Phi_j(x)$ is single valued on the neighborhood $U_l-\{a_j\}$ around the singularity $a_j$. We call this function a uniformizing function at $a_j$. 

Without loss of generality, we will focus on the situation $\infty\in\{r_1,\ldots,r_{n-1}\}$ for convenience. Let $f:\mathbb{H}\rightarrow\punctured$ be a universal covering map, assume $r_1=\infty$, and let $\gamma_1$ be the corresponding generator and $a_1$ be the corresponding singularity. The uniformizing function $\Phi_1(x)$ is defined as the following
\begin{equation*}
\Phi_1(x)=e^{\frac{2\pi i}{k_1}f^{-1}(x)}, \qquad U_l-\{a_1\}\rightarrow\punctured.
\end{equation*}
 If $\gamma_1\in\mbox{SL}_2(\mathbb{R})$ is a generator of $\Aut(f)$ that corresponds to $r_1$, so $\gamma_1$ fixes $\infty$, we have the following periodic property,
\begin{equation*}
f(x)=f\circ\gamma_1(x)=f(x+k_1).
\end{equation*} 
Fundamental Fourier analysis implies the following proposition. %(see \cite[p. 38-39]{SteinFourier}).
\begin{prop}\label{prop:coveringmap-global-expression}
        Let  $f$ be a covering map $f:\mathbb{H}\rightarrow\punctured$ such that a generator of $\Aut(f)$ fixes $\infty$. Then the map $f$ has the following global expansion
	\begin{equation}\label{eq:f-qk-expansion}
	f(\tau)=f(\qk)=c_0+A\qk+B\qk^2+\sum_{m=3}^{\infty}c_m\qk^m
	\end{equation}
	in $\qk=\qk(\tau)=\exp\{\frac{2\pi i}{k}\tau\}$ for any $\tau\in\mathbb{H}$, where $k\in\mathbb{R}$ is the constant that corresponds to the generator in Proposition \ref{prop:Classification-SL2}, and the constant $A\neq 0$.
\end{prop}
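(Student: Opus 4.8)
The plan is to turn the parabolic behaviour of the distinguished generator into periodicity of $f$, read off a Laurent expansion in $\qk$ by a descent argument, and then use the local analysis at the cusp to see that the expansion is in fact a power series with nonvanishing linear coefficient.

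First I would fix the periodicity. By hypothesis a generator $\gamma_1\in\Aut(f)$ fixes $\infty$, so Proposition~\ref{prop:Classification-SL2} gives $\gamma_1(\tau)=\tau+k$ for the real constant $k$ (replacing $\gamma_1$ by its inverse if necessary, we may take $k>0$). Since $f$ is constant on $\Aut(f)$-orbits, $f(\tau+k)=f\circ\gamma_1(\tau)=f(\tau)$, so $f$ is $k$-periodic on $\mathbb{H}$. Next, the map $\tau\mapsto\qk=\exp\{\frac{2\pi i}{k}\tau\}$ is a holomorphic surjection of $\mathbb{H}$ onto the punctured disk $\{0<|\qk|<1\}$ whose fibers are exactly the orbits $\{\tau+mk:m\in\mathbb{Z}\}$ of $\langle\gamma_1\rangle$. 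The $k$-periodicity therefore lets $f$ descend to a single-valued holomorphic function $\tilde{f}$ on the punctured disk with $f=\tilde{f}\circ\qk$, and the Laurent expansion of $\tilde{f}$ produces $f(\tau)=\sum_{m\in\mathbb{Z}}c_m\qk^m$. This is already the desired shape once the negative-index terms are shown to vanish.

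The crux is the local behaviour at the cusp, i.e.\ at $\qk=0$, which corresponds to $\operatorname{Im}\tau\to\infty$ and to the puncture $a_1$ associated with $\gamma_1$. I would argue removability in two equivalent ways. On one hand, as $\operatorname{Im}\tau\to\infty$ one has $f(\tau)\to a_1$, so $\tilde{f}$ is bounded near $0$ and Riemann's removable singularity theorem forces $c_m=0$ for all $m<0$ and $c_0=a_1$. On the other hand, and this is what also controls the leading term, the uniformizing function $\Phi_1(x)=e^{\frac{2\pi i}{k}f^{-1}(x)}=\qk\circ f^{-1}(x)$ constructed just above the statement was shown to be single-valued and biholomorphic from a punctured neighborhood $U_l-\{a_1\}$ onto a punctured disk about $0$, extending to send $a_1\mapsto 0$. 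Since $\tilde{f}$ is precisely the local inverse of $\Phi_1$, it extends holomorphically across $\qk=0$ with $\tilde{f}(0)=a_1=c_0$, confirming that the cusp is a removable singularity and that the expansion becomes the power series $f=c_0+\sum_{m\ge1}c_m\qk^m$.

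Finally, writing $A:=c_1=\tilde{f}'(0)$ and $B:=c_2$, the nonvanishing $A\neq0$ follows because $\Phi_1$ is biholomorphic at $a_1$, so its inverse $\tilde{f}$ has nonzero derivative at the cusp; equivalently, $k$ is exactly the cusp width, which forces the first Fourier coefficient to be nonzero. This yields the stated expansion. I expect the genuine obstacle to be this third step: ruling out a pole or essential singularity at the cusp and proving $A\neq0$, which is exactly where the earlier biholomorphicity of the uniformizing function does the real work. One should also note that upgrading the expansion from a neighborhood of the cusp to a \emph{global} power series valid for every $\tau\in\mathbb{H}$ uses that $f$ omits the value $\infty$, so that $\tilde{f}$ has no poles in $\{0<|\qk|<1\}$ and the series converges on the full disk.
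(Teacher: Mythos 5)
Your proof is correct, and it reaches the expansion by a genuinely different mechanism than the paper. The paper proves Proposition \ref{prop:coveringmap-global-expression} by direct Fourier analysis: it writes $f(x+iy)=\sum_{n}c_n(y)e^{2\pi inx/k}$, feeds the Cauchy--Riemann equation $i\,\partial f/\partial x=\partial f/\partial y$ into the coefficient functions to obtain the ODE $\frac{2n\pi}{k}c_n(y)+c_n'(y)=0$, solves it as $c_n(y)=C_ne^{-2\pi ny/k}$ so that $f=\sum_n C_n\qk^n$, and then disposes of the negative-index terms and of globality in one stroke by remarking that $\qk$ is a local coordinate at the corresponding singularity, so the Fourier series must coincide with the Taylor expansion of $f$ in $\qk$ there; $A\neq 0$ comes from the covering being unramified. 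You instead observe that $\qk$ realizes the quotient $\mathbb{H}/\langle\gamma_1\rangle$ as the punctured unit disk, descend $f$ to a single-valued $\tilde f$ and take its Laurent series, and then carry out the removability at $\qk=0$ explicitly --- boundedness plus Riemann's removable singularity theorem, or equivalently identifying $\tilde f$ as the local inverse of the uniformizing function $\Phi_1=\qk\circ f^{-1}$ introduced just before the proposition --- with $A=\tilde f'(0)\neq 0$ extracted from the same source (biholomorphicity of $\Phi_1$ at $a_1$, i.e.\ unramifiedness). Your route is the standard covering-theoretic one and is arguably tighter: it isolates exactly where the analytic content sits (removability and nonvanishing of $\tilde f'(0)$), and your closing caveat --- that global convergence of the series on $0<|\qk|<1$ requires $f$ to omit $\infty$, so that $\tilde f$ is pole-free on the punctured disk --- makes explicit a hypothesis the paper's argument passes over silently, since both its Fourier-coefficient integrals and the assertion ``Taylor coincides with Fourier, thus the equation holds globally'' tacitly treat $f$ as a $\mathbb{C}$-valued holomorphic function. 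What the paper's computation buys in exchange is a self-contained elementary derivation that displays the decay structure $c_n(y)=C_ne^{-2\pi ny/k}$ of the coefficients, whereas your descent argument quietly outsources that to the theory of Laurent expansions.
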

\begin{proof}
	If $\infty$ is invariant under a generator $\gamma\in\Aut(f)$, then there is a constant $k\in\mathbb{R}$ such that $\gamma(\tau)=\tau+k$, which implies that $f$ has the following periodic property
	$$f(\tau+k)=f(\tau),\qquad \mbox{for any }\tau\in\mathbb{H}.$$
	Consider $f(\tau)=f(x+iy)=f_y(x)$ as a function of $x$ on $\mathbb{R}$. Its Fourier series
	\begin{equation}\label{eq:FourierExpansion1}
	f(x+iy)=f_y(x)=\sum_{n=-\infty}^{\infty}c_n(y)e^{\frac{2\pi}{k}inx},
	\end{equation} 
	converges uniformly on $x$ due to Corollaries 2.3 and 2.4 in \cite[p. 41-42]{SteinFourier}, where the coefficients are given by the following equation
	\begin{equation}
	c_n(y)=\frac{1}{k}\int_{0}^{k}f_y(x)e^{-\frac{2\pi}{k}inx}dx\notag.
	\end{equation}
	The Cauchy-Riemann equation implies the following equation
	\begin{equation}
	i\frac{\partial f}{\partial x}=\frac{\partial f}{\partial y}
	\end{equation}
	holds since $f(\tau)$ is holomorphic. Calculating the left hand side, due to the uniform convergence on $x$, we have equation
	\begin{align}
	i\frac{\partial f}{\partial x}
	&=i\frac{\partial}{\partial x}\left(\sum_{n=-\infty}^{\infty}c_n(y)e^{\frac{2\pi}{k}inx}\right)=i\left(\sum_{n=-\infty}^{\infty}c_n(y)e^{\frac{2\pi}{k_1}inx}\cdot\frac{2\pi}{k}in\right),\notag\\
	&=\sum_{n=-\infty}^{\infty}-\frac{2n\pi}{k}c_n(y)e^{\frac{2\pi}{k}inx}.\notag
	\end{align}
	Also calculating the right hand side, we have equation
	\begin{align*}
	\frac{\partial f}{\partial y}=\sum_{n=-\infty}^{\infty}c_n'(y)e^{\frac{2\pi}{k}inx}.
	\end{align*}
	Matching the coefficients, the differential equation
    \begin{equation*}
	\frac{2n\pi}{k}c_n(y)+c_n'(y)=0
	\end{equation*}
	holds, which has solution
	\begin{equation*}
		c_n(y)=C_ne^{-\frac{2\pi}{k}ny},\quad\mbox{for some constant $C_n\in\mathbb{C}$.}
	\end{equation*}
	Substituting the solution set in equation \eqref{eq:FourierExpansion1}, we have equation
	\begin{equation*}
	f(\tau)=f(x+iy)=\sum_{n=-\infty}^{+\infty}C_ne^{-\frac{2\pi}{k}ny}e^{\frac{2\pi}{k}inx}=\sum_{n=-\infty}^{+\infty}C_ne^{\frac{2\pi}{k}in\tau},
	\end{equation*}
	the last equality holds because $inx-ny=in(x+iy)=in\tau$. On the other hand, the uniformizing function $\qk=e^{\frac{2\pi i}{k}\tau}$ is a local coordinate on a neighborhood of the corresponding singularity $a\in\{a_1,\ldots,a_n\}$ on $\punctured$, so $f$ has a Taylor expansion
	\begin{equation}\label{eq:Taylor-Fourier}
	f(\tau)=c_0+c_1\qk+c_2\qk^2+\sum_{m=3}^{\infty}c_m\qk^m
	\end{equation}
	 in $\qk$ on a neighborhood of $a$, which will coincide with its Fourier series, thus equation \eqref{eq:Taylor-Fourier} holds globally. For convenience in later sections, we write $A=c_1, B=c_2$,
	\begin{equation*}
	f(\tau)=c_0+A\qk+B\qk^2+\sum_{j=3}^{\infty}c_j\qk^j.
	\end{equation*}
	To see $A\neq 0$, remember that $f(\tau)$ is a covering map without ramification point, this implies that $f_{\qk}|_{\qk=0}\neq 0$, so $A\neq 0$.
\end{proof}

The condition $A\neq0$ in equation \eqref{eq:f-qk-expansion} allows us to find an inversion series $\qk(f)$ in $f$ such that $\qk(f(\qk))=\qk$. We assume the inversion series $\qk(f)$ has expression
\begin{equation}\label{eq:qk-f-inversion}
\qk(f)=\tilde{c}_0+\tilde{A}f+\tilde{B}f^2+\sum_{j=3}^{\infty}\tilde{c}_jf^j.
\end{equation}

\begin{theorem}\label{thm:metric-expression}
	Let $f:\mathbb{H}\rightarrow\punctured$ is a covering map for the universal covering space, and $\Aut(f)$ has a generator $\gamma$ which fixes infinity, then there exists a positive constant $k\in\mathbb{R}$ such that $f$ can be given as the following expansion
	\begin{equation*}
	f(\tau)=f(\qk)=c_0+A\qk+B\qk^2+c_3\qk^3+\sum_{m=4}^{\infty}c_m\qk^m
	\end{equation*}
	in $\qk=\exp\{\frac{2\pi i}{k}\tau\}, \tau\in\mathbb{H}$ and $A\neq 0$. Furthermore, such covering map induces a complete K\"{a}hler-Einstein metric on $\punctured$ from $\mathbb{H}$, it can be given by the following equation
	\begin{equation}
	ds^2=\frac{|\qk'(f)|^2}{|\qk(f)|^2\log^2|\qk(f)|}|df|^2,\quad f\in\punctured,
	\end{equation}
	where $\qk(f)$ is the inversion series \eqref{eq:qk-f-inversion}.
\end{theorem}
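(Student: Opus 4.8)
The first assertion---the $\qk$-expansion of $f$ with $A\neq 0$, the cubic term merely displayed separately for later convenience---is nothing more than a restatement of Proposition~\ref{prop:coveringmap-global-expression}, so no new argument is needed there. The genuine content is the metric formula, and the plan is to obtain it by transporting the Poincar\'e metric of $\mathbb{H}$ along the covering map $f$ and then rewriting it successively in the coordinates $\qk$ and $f$. First I would recall that $\mathbb{H}$ carries the complete Poincar\'e metric $ds_{\mathbb{H}}^2=|d\tau|^2/(\operatorname{Im}\tau)^2$ of constant Gauss curvature $-1$; in complex dimension one this metric is automatically K\"ahler, and ``Einstein'' reduces to constant curvature, so it is K\"ahler-Einstein. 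Since $f:\mathbb{H}\rightarrow\punctured$ is a universal covering and, as established in the discussion preceding Proposition~\ref{prop:invariant-pt-infty-0}, $\Aut(f)\subseteq\Aut(\mathbb{H})=\mathrm{SL}_2(\mathbb{R})$ acts by isometries of $ds_{\mathbb{H}}^2$, the Poincar\'e metric descends to a well-defined metric on the quotient $\punctured$. Because $f$ is a local biholomorphism and a local isometry for the descended metric, that metric again has constant curvature $-1$ and is K\"ahler-Einstein, and it is complete because $\mathbb{H}$ is complete and a quotient by a group acting freely by isometries inherits completeness; moreover the completeness \emph{at} the punctures is precisely what the parabolic nature of the generators (Theorem~\ref{thm:generator-are-parabolic}) guarantees, the cusps sitting at infinite distance.

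The computational heart is then the change of variables. Writing $\qk=\exp\{\frac{2\pi i}{k}\tau\}$ and inverting gives $\tau=\frac{k}{2\pi i}\log\qk$, whence $\operatorname{Im}\tau=-\frac{k}{2\pi}\log|\qk|$ and $d\tau=\frac{k}{2\pi i}\,d\qk/\qk$. Here one takes $k>0$ --- the very condition that $\qk$ vanish at the cusp as $\operatorname{Im}\tau\to+\infty$ forces this sign and simultaneously makes $\operatorname{Im}\tau>0$, since then $\log|\qk|<0$. Substituting both identities into $ds_{\mathbb{H}}^2$, the factors $(k/2\pi)^2$ cancel and I obtain
\[
ds^2=\frac{|d\tau|^2}{(\operatorname{Im}\tau)^2}=\frac{|d\qk|^2}{|\qk|^2\log^2|\qk|}.
\]
Finally I would pass from $\qk$ to $f$ through the inversion series $\qk=\qk(f)$ of \eqref{eq:qk-f-inversion}, whose existence is secured by $A\neq 0$; then $d\qk=\qk'(f)\,df$ and the expression becomes $ds^2=\dfrac{|\qk'(f)|^2}{|\qk(f)|^2\log^2|\qk(f)|}\,|df|^2$, exactly as claimed, with $k$ the constant attached to the infinity-fixing generator in Proposition~\ref{prop:Classification-SL2}.

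The hard part will not be the change-of-variables computation, which is routine, but justifying that the \emph{local} expression in $\qk(f)$ faithfully represents the \emph{global} metric: the inversion series converges only on a neighborhood of the cusp value $f=c_0$, and $\qk$ itself is not single-valued on all of $\punctured$, since deck transformations not fixing $\infty$ move it. The resolution is conceptual rather than computational: the form $|d\tau|^2/(\operatorname{Im}\tau)^2$ is $\mathrm{SL}_2(\mathbb{R})$-invariant, hence globally well defined even though $\qk$ is not, so the displayed formula must be read as the valid local representation of this global metric near the chosen cusp --- which is exactly the regime required for the asymptotic expansion pursued in the later sections. Pinning down this local-to-global compatibility, together with the curvature, K\"ahler-Einstein, and completeness assertions noted above, is where the care lies.
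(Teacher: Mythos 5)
Your proposal is correct and follows essentially the same route as the paper: the first claim is delegated to Proposition~\ref{prop:coveringmap-global-expansion} (as in the paper, via Proposition~\ref{prop:coveringmap-global-expression}), and the metric formula is obtained by descending the Poincar\'e metric $\frac{-4}{(\tau-\overline{\tau})^2}|d\tau|^2=\frac{|d\tau|^2}{(\operatorname{Im}\tau)^2}$ through the branch $\tau(f)=\frac{k}{2\pi i}\log \qk(f)$ and invoking $\mathrm{SL}_2(\mathbb{R})$-invariance to see that the resulting expression is independent of the choice of branch. Your closing discussion of the local-versus-global reading of $\qk(f)$, and the curvature, K\"ahler--Einstein, and completeness remarks, only make explicit points the paper's proof treats implicitly, so there is no substantive divergence.
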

\begin{proof}
	The first statement is directly from Proposition \ref{prop:coveringmap-global-expression}, we only need to show the second part. For any point $x_0\in\punctured$, there is an evenly covered neighborhood $U$ of $x_0$ that is simply connected. Let $\tilde{U}$ and $\tilde{U}'$ are two different pre-images of $U$, i.e., $\tilde{U}\neq\tilde{U}'\in f^{-1}(U)$. Notice that $\tilde{U}$ and $\tilde{U}'$ are both in $\mathbb{H}$, we can define logarithms of $\exp\{\frac{2\pi i}{k}\tau\}=\qk=\qk(f)$ on different branches as the following
	\begin{align}
	\tau=\tau(f)=\frac{k}{2\pi i}\log\qk(f),\qquad\mbox{on }\tilde{U},\notag\\
	\tau'=\tau'(f)=\frac{k}{2\pi i}\log\qk(f),\qquad\mbox{on }\tilde{U}'.\notag
	\end{align}
	Recall that $\tilde{U}$ and $\tilde{U}'$ are homeomorphic through a cover transformation $h\in\Aut(f)\subseteq\mbox{SL}_2(\mathbb{R})$, i.e., $h\circ \tau(f)=\tau'(f)$. Since the Poincar\'{e} metric is invariant under actions of $\mbox{GL}_2(\mathbb{R})$, therefore we have equality
	\begin{equation}\label{eq:invariance}
	ds^2=\frac{-4}{(\tau-\overline{\tau})^2}|d\tau|^2=\frac{-4}{(\tau'-\overline{\tau'})^2}|d\tau'|^2,
	\end{equation}
	Thus the general logarithm property
	$$\frac{d}{df}(\log \qk(f))=\qk'(f)/\qk(f)df$$
	holds since $\log \qk(f)$ is biholomorphic on the branch $\tilde{U}$, . The metric can be induced on $\tilde{U}$ by the following calculation 
	\begin{align}
	ds^2 & =\frac{-4}{(\tau-\overline{\tau})^2}|d\tau|^2,\notag\\
	& =\frac{-4}{((\frac{k}{2\pi i}\log \qk(f)-\overline{\frac{k}{2\pi i}\log \qk(f)})^2}|d(\frac{k}{2\pi i}\log \qk(f))|^2,\notag\\
	& =\frac{-4\cdot (2\pi i)^2}{k^2(\log \qk(f)+\overline{\log \qk(f)})^2}(\frac{k}{2\pi})^2\left|\frac{\qk'(f)}{\qk(f)}df\right|^2,\notag\\
	%& =\frac{4}{(2\log |\qk(f)|)^2}\cdot \frac{|\qk'(f)|^2}{|\qk(f)|^2}|df|^2,\notag\\
	& =\frac{|\qk'(f)|^2}{|\qk(f)|^2\log^2|\qk(f)|}|df|^2.\label{eq:metricexpression}
	\end{align}
    Furthermore, the invariant property in equation \eqref{eq:invariance} implies that the expression \eqref{eq:metricexpression} is independent of the choice of branches. Therefore the complete K\"{a}hler-Einstein metric $ds^2$ can be defined globally by equation \eqref{eq:metricexpression} on $\punctured$.
\end{proof}

One thing that is worth to mention is that Theorems \ref{thm:metric-expression} and \ref{thm:coefficient-depend-on-A-B}, which will be included in the later section, together imply the main result Theorem \ref{thm:into-main-1}.%.                                                                                                                                                                                                                                                                                                                                                                                                                         
    \section{Schwarzian Derivative}\label{section:schwarzian-derivative}
\subsection{Introduction}
The Schwarzian derivative is invariant under linear transformations, let $\tau=\tau(f)=f^{-1}$ be the inverse of the covering map $f:\mathbb{H}\rightarrow\punctured$, the Schwarzian derivative $\{\tau,f\}$ is actually well-defined due to the invariance. This fact builds a connection between $\{f,\tau\}$ and $\{\tau,f\}$, which will lead to the main result of this article.
\begin{definition}\label{def:schwarzian-deirvative}
	If a function $f$ is locally biholomorphically defined on the complex $z-$plane, the Schwarzian derivative of $f$ with respect to $z$ is defined as the following:
	\begin{equation*}
		\{f,z\}=2\left(\frac{f_{zz}}{f_z}\right)_z-\left(\frac{f_{zz}}{f_z}\right)^2,
	\end{equation*}
	where we write $f_z=df/dz$ for convenience.
\end{definition}

In Definition \ref{def:schwarzian-deirvative}, we require $f$ being locally biholomorphic so that we are able to talk about the Schwarzian derivative of its inverse function $z=f^{-1}$. We simply write $\{z,f\}$ to denote the Schwarzian derivative of $z=z(f)$ with respect to $f$. If $w$ is also a locally biholomorphic function defined on the complex plane, then it allows us to consider the Schwarzian derivative of the composition $f\circ w(z)$. Since the functions $f,w$ are biholomorphic, we will simply write the inverse $f^{-1}$ as $z(f)$ and the inverse $w^{-1}=z(w)$.

\begin{prop}\label{prop:schwarzian-basic-properties}
	Assume $f,w$ are two locally biholomorphic functions defined on the complex complex plane, then we have the following properties:
	\begin{enumerate}
		\item $\{\frac{af+b}{cf+d},z\}=\{f,z\}$ for any $\left(\begin{array}{cc}
		a & b\\
		c & d
		\end{array}\right)\in\mbox{SL}_2(\mathbb{C})$,\\
		\item $\{f,z\}=\{f,w\}w_z^2+\{w,z\}$ if $f=f\circ w(z)$. In particular, $\{z,f\}=-\{f,z\}z_f^2$.
	\end{enumerate}
\end{prop}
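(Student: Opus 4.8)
The plan is to prove property (2) first by a direct computation and then to obtain property (1) as a consequence, since Möbius invariance is essentially a corollary of the chain rule once one checks that a Möbius map has vanishing Schwarzian derivative. The computations become transparent if I introduce the \emph{pre-Schwarzian} $P(u):=u_{zz}/u_z=(\log u_z)_z$, so that the defining formula reads $\{u,z\}=2P(u)_z-P(u)^2$.

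For the chain rule I would write $f=f(w)$ with $w=w(z)$ and first record the elementary identities $f_z=f_w w_z$ and $f_{zz}=f_{ww}w_z^2+f_w w_{zz}$, from which the pre-Schwarzian factorizes as
\begin{equation*}
P(f)=\frac{f_{zz}}{f_z}=\frac{f_{ww}}{f_w}\,w_z+\frac{w_{zz}}{w_z}=P_w(f)\,w_z+P(w),
\end{equation*}
where $P_w(f)=f_{ww}/f_w$ denotes the pre-Schwarzian taken with respect to $w$. Differentiating this in $z$, using $[P_w(f)]_z=[P_w(f)]_w w_z$, and substituting into $\{f,z\}=2P(f)_z-P(f)^2$, I would collect the result into the block $\{f,w\}w_z^2=\bigl(2[P_w(f)]_w-P_w(f)^2\bigr)w_z^2$ together with the block $\{w,z\}=2P(w)_z-P(w)^2$. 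The leftover cross terms amount to $2P_w(f)\bigl(w_{zz}-w_z P(w)\bigr)$, which vanish identically because $w_{zz}=w_z P(w)$ by the very definition of $P(w)$. This gives $\{f,z\}=\{f,w\}w_z^2+\{w,z\}$, and this cancellation of the cross terms is the single step that needs genuine care.

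The inverse formula then drops out by applying the chain rule to the identity map. Writing $z=z(f)$ for the inverse of $f$, the composition $z(f(z))$ is the identity, whose Schwarzian is $0$; hence $0=\{z,f\}\,f_z^2+\{f,z\}$, and since $z_f=1/f_z$ this rearranges to $\{z,f\}=-\{f,z\}\,z_f^2$. Finally, for property (1) I would set $g=(af+b)/(cf+d)=M\circ f$ and apply the chain rule to get $\{g,z\}=\{M,f\}f_z^2+\{f,z\}$, so it suffices to verify $\{M,f\}=0$. A one-line computation yields $M_{ff}/M_f=-2c/(cf+d)$, whence $2(M_{ff}/M_f)_f=4c^2/(cf+d)^2$ equals $(M_{ff}/M_f)^2=4c^2/(cf+d)^2$, so $\{M,f\}=0$ and $\{g,z\}=\{f,z\}$. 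I expect no serious obstacle beyond the bookkeeping in the chain-rule step; everything else is routine differentiation and the cross-term cancellation is the only substantive point.
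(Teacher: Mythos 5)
Your proof is correct: the pre-Schwarzian factorization $P(f)=P_w(f)\,w_z+P(w)$, the cancellation of the cross terms via $w_{zz}=w_z P(w)$, the inverse formula obtained from $\{\mathrm{id},z\}=0$, and the verification $\{M,f\}=0$ (valid for any $ad-bc\neq 0$, since $M_{ff}/M_f=-2c/(cf+d)$ is independent of the determinant) all check out, and every step respects the paper's normalization $\{f,z\}=2\left(f_{zz}/f_z\right)_z-\left(f_{zz}/f_z\right)^2$, which is twice the classical Schwarzian but satisfies the same chain rule since both identities are linear in the Schwarzian. The paper itself gives no argument here (its proof reads only ``The proof is obvious''), so there is no competing approach to compare against; yours is the standard chain-rule computation, with the cross-term cancellation correctly identified as the one substantive point, and it validly supplies the details the paper omits.
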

\begin{proof}
	The proof is obvious.
\end{proof}

Recall Theorem \ref{thm:metric-expression} in section \ref{section:qk-expansion-metric-formula}, the covering map $f$ can be expressed as an expansion in $\qk$, let us denote this expansion as $f(\qk)$. Notice that the three functions $f(\tau)$ and $f(\qk)$ and $\qk(\tau)$ are all locally biholomorphic, it makes sense to consider the relation of the Schwarzian derivatives among the three of them.

\begin{prop}\label{prop:schwarzian-expression-qk-tau-relation}
	Let $f:\mathbb{H}\rightarrow\punctured, \tau\mapsto f(\tau)$ be a covering map and $\qk(\tau)=\exp\{\frac{2\pi i}{k}\tau\}$, then we have the following relation
	\begin{equation}\label{eq:schwarzian-expression-qk-tau-relation}
		\{f,\tau\}=\frac{4\pi^2}{k^2}(1-\qk^2\{f,\qk\}).
	\end{equation}
\end{prop}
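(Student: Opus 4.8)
The plan is to regard $f$ as the composition $f = f(\qk)\circ \qk(\tau)$ and apply the chain rule for the Schwarzian derivative recorded in Proposition \ref{prop:schwarzian-basic-properties}(2). Taking $w=\qk$ and $z=\tau$ in that identity gives
\[
\{f,\tau\}=\{f,\qk\}\,(\qk)_\tau^2+\{\qk,\tau\}.
\]
Thus the whole statement reduces to computing the two explicit quantities $(\qk)_\tau$ and $\{\qk,\tau\}$ for the exponential $\qk=\exp\{\frac{2\pi i}{k}\tau\}$, after which the result is pure bookkeeping.

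Next I would differentiate directly. Writing $c=\frac{2\pi i}{k}$ so that $\qk=e^{c\tau}$, one has $(\qk)_\tau=c\,\qk$ and $(\qk)_{\tau\tau}=c^2\,\qk$, hence the logarithmic derivative $(\qk)_{\tau\tau}/(\qk)_\tau=c$ is \emph{constant}. This is the crucial structural feature of the exponential: it makes the first term $2\bigl((\qk)_{\tau\tau}/(\qk)_\tau\bigr)_\tau$ of the Schwarzian vanish, leaving
\[
\{\qk,\tau\}=2(c)_\tau-c^2=-c^2.
\]
Substituting $c^2=\frac{(2\pi i)^2}{k^2}=-\frac{4\pi^2}{k^2}$ yields $\{\qk,\tau\}=\frac{4\pi^2}{k^2}$ and $(\qk)_\tau^2=c^2\qk^2=-\frac{4\pi^2}{k^2}\qk^2$.

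Finally, inserting these two computations into the chain-rule identity gives
\[
\{f,\tau\}=-\frac{4\pi^2}{k^2}\qk^2\{f,\qk\}+\frac{4\pi^2}{k^2}
=\frac{4\pi^2}{k^2}\bigl(1-\qk^2\{f,\qk\}\bigr),
\]
which is exactly \eqref{eq:schwarzian-expression-qk-tau-relation}. There is no genuine obstacle in this argument; the only points demanding care are correctly tracking the sign introduced by $(2\pi i)^2=-4\pi^2$ and observing that the constancy of $(\qk)_{\tau\tau}/(\qk)_\tau$ is precisely what forces the leading term of $\{\qk,\tau\}$ to drop out, so that the final expression collapses to a single constant.
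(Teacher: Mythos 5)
Your proof is correct and follows essentially the same route as the paper: the paper likewise invokes Proposition \ref{prop:schwarzian-basic-properties}(2) with $w=\qk$, computes $(\qk)_\tau=\frac{2\pi i}{k}\qk$ and $\{\qk,\tau\}=0-\left(\frac{2\pi i}{k}\right)^2=\frac{4\pi^2}{k^2}$, and substitutes into the chain-rule identity. Your additional remark that the constancy of $(\qk)_{\tau\tau}/(\qk)_\tau$ is what kills the derivative term of the Schwarzian is exactly the content of the paper's ``$0-$'' in that computation.
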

\begin{proof}
	Assume $w=\qk=e^{\frac{2\pi}{k}i\tau}$ in Proposition \ref{prop:schwarzian-basic-properties}, we have equalities
	$$(\qk)_{\tau}=\frac{2\pi i}{k}\qk,\qquad(\qk)_{\tau\tau}=-(\frac{2\pi}{k})^2\qk,\qquad (\qk)_{\tau}^2=-(\frac{2\pi}{k})^2\qk^2,$$
	and
	$$\{\qk,\tau\}=0-(\frac{2\pi i}{k})^2=\frac{4\pi^2}{k^2}.$$
    Direct calculation leads to the conclusion of this proposition,
	\begin{equation}
	\{f,\tau\}=-(\frac{2\pi}{k})^2\qk^2\{f,\qk\}+(\frac{2\pi}{k})^2=\frac{4\pi^2}{k^2}(1-\qk^2\{f,\qk\}).\notag
	\end{equation}
\end{proof}

It is beneficial to have a closer look at the expansion of $\{f,\qk\}$ in $\qk$.

\begin{prop}\label{prop:schwarzian-f-qk-expansion}
    If a covering map $f:\mathbb{H}\rightarrow\punctured$ has the following expansion
    \begin{equation}\label{eq:f-qk-expanion-with-0}
       f(\tau)=f(\qk)=A\qk+B\qk^2+c_3\qk^3+c_4\qk^4+\sum_{m=5}^{\infty}c_m\qk^m
    \end{equation}
    in $\qk=\exp\{\frac{2\pi i}{k}\tau\}$, where $\tau\in\mathbb{H}$. Then the Schwarzian derivative of $f(\qk)$ with respect to $\qk$ has the following expansion
    \begin{equation}\label{eq:schwarzian-f-qk-expansion-2}
    \{f,\qk\}=P_0(\B,\C_3)+\sum_{m=1}^{\infty}P_m(\B,\C_3,\ldots,\C_{m+3})\qk^m
    \end{equation}
     in $\qk$, where $\B=\frac{B}{A}$, $\C_{m+3}=\frac{c_{m+3}}{A}$ and the coefficient term $P_m(\B,\C_3,\ldots,\C_{m+3})$ is a polynomial in $\B,\C_3,\ldots,\C_{m+3}$ with degree $1$ in $\C_{m+3}$, and degree $m+2$ in $\B$, $m\ge 0$. Specially $P_0(\B,\C_3)=12(\C_3-\B^2)$.
\end{prop}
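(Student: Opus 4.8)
The plan is to reduce the whole computation to a single normalized power series and then read off the structure from two elementary properties of the Schwarzian derivative recorded in Proposition~\ref{prop:schwarzian-basic-properties}: its invariance under M\"obius (in particular scaling) transformations, and its transformation law under a dilation of the variable. First I would invoke Proposition~\ref{prop:schwarzian-basic-properties}(1): the scaling $f\mapsto f/A$ is a fractional linear transformation in $\mathrm{SL}_2(\mathbb{C})$, so $\{f,\qk\}=\{f/A,\qk\}$. Setting $g:=f/A=\qk+\B\qk^2+\C_3\qk^3+\C_4\qk^4+\cdots$, where $\B=B/A$ and $\C_j=c_j/A$, this already makes manifest why the Schwarzian depends only on the normalized coefficients $\B,\C_3,\C_4,\ldots$ and not on $A$ or the $c_m$ separately.

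Next I would establish polynomiality. Since $g_{\qk}=1+2\B\qk+\cdots$ has constant term $1$, its reciprocal $(g_{\qk})^{-1}$ is a formal power series whose coefficients are polynomials in $\B,\C_3,\ldots$; hence the logarithmic derivative $u:=g_{\qk\qk}/g_{\qk}$ and the Schwarzian $\{g,\qk\}=2u_{\qk}-u^2$ are power series in $\qk$ with coefficients polynomial in $\B,\C_3,\ldots$, which is the expansion \eqref{eq:schwarzian-f-qk-expansion-2}. The constant term follows by reading off the $\qk^0$-coefficient directly: expanding gives $u=2\B+(6\C_3-4\B^2)\qk+\cdots$, so $P_0=2u_{\qk}(0)-u(0)^2=2(6\C_3-4\B^2)-4\B^2=12(\C_3-\B^2)$.

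To control which variables occur and their degrees I would derive a weighted homogeneity. Applying Proposition~\ref{prop:schwarzian-basic-properties}(2) to the dilation $m(\qk)=\lambda\qk$, for which $\{m,\qk\}=0$ and $m_{\qk}=\lambda$, together with scale-invariance, gives $\{g_\lambda,\qk\}=\lambda^2\{g,\cdot\}(\lambda\qk)$ for the rescaled series $g_\lambda(\qk)=\lambda^{-1}g(\lambda\qk)=\qk+\lambda\B\qk^2+\lambda^2\C_3\qk^3+\cdots$. Matching $\qk^m$-coefficients yields $P_m(\lambda\B,\lambda^2\C_3,\ldots,\lambda^{j-1}\C_j,\ldots)=\lambda^{m+2}P_m(\B,\C_3,\ldots)$; that is, $P_m$ is weighted-homogeneous of weight $m+2$ once $\C_j$ is assigned weight $j-1$ and $\B$ weight $1$. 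Consequently every monomial $\prod_j\C_j^{e_j}$ of $P_m$ satisfies $\sum_j e_j(j-1)=m+2$, which forces all indices $j\le m+3$, with $\C_{m+3}$ occurring only to the first power (its square already has weight $2(m+2)>m+2$) and $\B$ occurring to at most the power $m+2$.

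Finally I would certify that these extreme degrees are actually attained, since homogeneity supplies only the upper bounds. For the linear term, isolating the part of $u$ that is linear in $\C_{m+3}$ (it first appears at order $\qk^{m+1}$, coming from $g_{\qk\qk}$) shows the coefficient of $\C_{m+3}$ in $P_m$ equals $2(m+1)(m+2)(m+3)\neq0$, so the degree in $\C_{m+3}$ is exactly $1$. For the top power of $\B$, I would set $\C_3=\C_4=\cdots=0$, so that $g=\qk+\B\qk^2$ and a direct computation gives the closed form $\{g,\qk\}=-12\B^2(1+2\B\qk)^{-2}$; expanding by the binomial series shows the $\qk^m$-coefficient carries the nonzero term $(-1)^{m+1}12(m+1)2^m\B^{m+2}$, so $\B$ occurs to degree exactly $m+2$. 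The only genuinely delicate point is this last step: weighted homogeneity is a soft argument that bounds the degrees, but confirming that the top $\B$-power and the linear $\C_{m+3}$ term survive with nonzero coefficient requires these explicit single-variable reductions.
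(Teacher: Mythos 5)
Your proof is correct, and it takes a genuinely different route from the paper's. The paper argues by direct expansion: it computes $f'(\qk)$ and $f''(\qk)$, inverts $f'$ via the geometric series, forms $f''/f'$, and assembles $\{f,\qk\}=2\left(f''/f'\right)'-\left(f''/f'\right)^2$, tracking through each stage that the coefficients are polynomials $Q^{(i)}_m$ in $\B,\C_3,\ldots,\C_{m+3}$ of degree $1$ in $\C_{m+3}$ with constant coefficient; the dependence on the normalized ratios emerges there by factoring $A$ out of the series inside the computation rather than from invariance. You instead normalize once and for all via M\"obius invariance ($\{f,\qk\}=\{f/A,\qk\}$), obtain polynomiality from the formal inversion of $g_{\qk}=1+2\B\qk+\cdots$, and then replace all of the paper's bookkeeping by the scaling identity $P_m(\lambda\B,\lambda^2\C_3,\ldots,\lambda^{m+2}\C_{m+3})=\lambda^{m+2}P_m(\B,\C_3,\ldots,\C_{m+3})$, i.e.\ weighted homogeneity of weight $m+2$ with $\B$ of weight $1$ and $\C_j$ of weight $j-1$. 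That one observation yields at a stroke what the paper tracks by hand (only $\C_j$ with $j\le m+3$ can occur, and $\C_{m+3}$ at most linearly, necessarily with constant coefficient) and in addition bounds the $\B$-degree by $m+2$, a part of the statement the paper's proof never explicitly verifies. Your two specializations then certify that the extreme degrees are attained: the coefficient $2(m+1)(m+2)(m+3)\neq0$ of $\C_{m+3}$, and the closed form $\{\qk+\B\qk^2,\qk\}=-12\B^2(1+2\B\qk)^{-2}$ producing the nonzero top term $(-1)^{m+1}12(m+1)2^m\B^{m+2}$ — both computations check out, and at $m=0$ they reproduce $P_0=12(\C_3-\B^2)$. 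So your argument is not only valid but slightly stronger than the paper's, proving the degree assertions as exact; the only point worth a sentence in a final write-up is that the formal coefficient identities apply to the actual series because $f'(0)=A\neq0$ makes $1/f'$ analytic near $\qk=0$, the same convergence remark the paper makes.
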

\begin{proof}
	We will write $f'(\qk)=\frac{\d f(\qk)}{\d\qk}$ and $f''(\qk)=\frac{\d}{\d\qk}\left(\frac{\d f}{\d\qk}\right)$ for convenience. Direct calculation gives the following equations
\begin{align}
	f'(\qk)&=A+2B\qk+3c_3\qk^2+\sum_{m=4}^{\infty}mc_m\qk^{m-1}=A+A\cdot\left(2\B\qk+\sum_{m=2}^{\infty}Q^{(1)}_m(\C_{m+1})\qk^m\right),\notag\\
	f''(\qk)&=2B+6c_3\qk+\sum_{m=4}^{\infty}m(m-1)c_m\qk^{m-2}=A\left(2\B+\sum_{m=1}^{\infty}Q^{(2)}_m(\C_{m+2})\qk^m\right),\notag
\end{align}
where $Q^{(1)}_m(\C_{m+1})$ is a polynomial in $\C_{m+1}$ with degree $1$, and $Q^{(2)}_m(\C_{m+2})$ is a polynomial in $\C_{m+2}$ with degree $1$ as well. We have the following calculation
\begin{align}
1/f'(\qk)&=\frac{1}{A}\cdot\frac{1}{1+\left(2\B\qk+\sum_{m=2}^{\infty}Q^{(1)}_m(\C_{m+1})\qk^m\right)},\notag\\
         &=\frac{1}{A}\cdot\left[1-2\B\qk+\left(4\B^2-3\C_3\right)\qk^2+\sum_{m=3}^{\infty}Q^{(3)}_m(\B,\C_3,\ldots,\C_{m+1})\qk^m\right],\label{eq:schwarzian-1/fqk}
\end{align}
where the coefficients $Q^{(3)}_m(\B,\C_3,\ldots,\C_{m+1})$ are polynomials in $\B,\C_3,\ldots,\C_{m+1}$, which has degree $1$ in $\C_{m+1}$ with constant coefficient. % and degree $m$ in $\B$.
 We applied the expansion $\frac{1}{1-z}=1+z+z^2+\sum_{m=3}^{\infty}z^m$ in the above calculation. The series \eqref{eq:schwarzian-1/fqk} converges since that $f'(\qk)$ is still analytic on the unit disk $\mathbb{D}$ (see \cite[p. 179-182]{AhlforsComplex}). Then we have the following equation %p179-p182 
\begin{align}
f''(\qk)/f'(\qk)=2\B+(6\C_3-4\B^2)\qk+\sum_{m=2}^{\infty}Q^{(4)}_m(\B,\C_3,\ldots,\C_{m+2})\qk^2,\notag                 
\end{align}
where $Q^{(4)}_m$ are polynomials in $\B,\C_3,\ldots,\C_{m+2}$ with degree $1$ in $\C_{m+2}$ with constant coefficient. %, and degree $m+1$ in $\B$. 
Continue computing the terms in $\{f,\qk\}=2\left(\frac{f''(\qk)}{f'(\qk)}\right)'-\left(\frac{f''(\qk)}{f'(\qk)}\right)^2$, the Schwarzian derivative can be given by the following expansion
\begin{align*}
\{f,\qk\}&=\sum_{m=0}^{\infty}Q^{(5)}_m(\B,\C_3,\ldots,\C_{m+3})\qk^m-\sum_{m=0}^{\infty}Q^{(6)}_m(\B,\C_3,\ldots,\C_{m+2})\qk^m,\notag\\
         &=12(\C_3-\B^2)+\sum_{m=1}^{\infty}P_m(\B,\C_3,\ldots,\C_{m+3})\qk^m
         %=12(\C_3-\B^2)+48(\C_4-2\C_3\B+\B^3)\qk+\sum_{m=2}^{\infty}P_m(\B,\C_3,\ldots,\C_{m+3})\qk^m,
         %\notag\\&=\sum_{m=0}^{\infty}P_m(\B,\C_3,\ldots,\C_{m+3})\qk^m,
\end{align*}
 in $\qk$, where $Q^{(5)}_m$ is a polynomial in $\B,\C_3,\ldots,\C_{m+3}$ which has degree $1$ in $\C_{m+3}$ with constant coefficient; %, and degree $m+2$ in $\B$; 
$Q^{(6)}_m$ is a polynomial in $\B,\C_3,\ldots,\C_{m+2}$ that has degree $1$ in $\C_{m+2}$ but with coefficient in terms of $\B$. % and degree $m+2$ in $\B$. 
Therefore the coefficient $P_m(\B,\C_3,\ldots,\C_{m+2})=Q^{(5)}_m-Q^{(6)}_m$ of $\qk^m$ is a polynomial in $\B,\C_3,\ldots,\C_{m+2}$, which has degree $1$ in $\C_{m+3}$ with constant coefficient. % and degree $m+2$ in $\B$.
\end{proof}

\subsection{Differential Equation}
Recall section \ref{subsection:deck-group-generators}, the branches of the inverse $\tau(f)$ are related by linear transformations, Proposition \ref{prop:schwarzian-basic-properties} implies that $\{\tau(f),f\}=\{\tau,f\}$ is well-defined. In order to discuss its analytic property, we consider the following uniformizing function near each singularity $a\in\{a_1,\ldots,a_n\}$, 
\begin{equation}
	\Phi(f)=\exp\{-\frac{2\pi i}{k}\frac{1}{\tau(f)-r}\},\quad\mbox{when }r\neq\infty\label{eq:schwarzian-uniformize-1},
\end{equation}
or
\begin{equation}
	\Phi(f)=\exp\{\frac{2\pi i}{k}\tau(f)\},\quad \mbox{when }r=\infty,\label{eq:schwarzian-uniformize-2}
\end{equation}
where $r$ and $k$ are the constants with respect to the parabolic transformation $\gamma$ in Propositions \ref{prop:invariant-pt-infty-0} and \ref{prop:Classification-SL2}, and $\gamma$ is the generator corresponding to the singularity $a$. On the other side, from section \ref{subsection:deck-group-generators}, the uniformizing functions are single valued in a neighborhood of $a$. So the uniformizing function $\Phi(f)$ has the following expansion near $a$,
\begin{equation}
    \Phi(f)=\alpha_a(f-a)+\beta_a(f-a)^2+(f-a)^2\phi(f),\quad a\neq\infty,\alpha_a\neq 0,\label{eq:unif-func-singularity-expansion-finite}\\
\end{equation}
\begin{equation}
    \Phi(f)=\alpha_{\infty} f+\beta_{\infty}+\varphi(\frac{1}{f}),\quad a=\infty,\alpha_{\infty}\neq 0,\label{eq:unif-func-singularity-expansion-infty}
\end{equation}
where $\phi(f)$ and $\varphi(\frac{1}{f})$ are regular at $a$ and satisfying
\begin{align*}
\phi(a)=0,\qquad\mbox{when }a\neq\infty,\\
\varphi(\frac{1}{a})=\varphi(0)=0,\qquad\mbox{when }a=\infty.
\end{align*}
Assume one of the singularities is $0$, and the parabolic transformation $\gamma$ that associates to this singularity $a=0$ fixes $r=\infty$, and $k$ is the constnat in Proposition \ref{prop:Classification-SL2}, then we have the following analytic expansion of the uniformizing function $\Phi(f)$:
\begin{equation}\label{eq:unif-func-singularity-expansion-calculation}
	\exp\{\frac{2\pi i}{k}\tau(f)\}=\alpha_{(a=0)} f+\beta_{(a=0)} f^2+f^2\phi(f).
\end{equation}
We use equation \eqref{eq:unif-func-singularity-expansion-calculation} to calculate its Schwarzian derivative $\{\tau,f\}$. Notice that differentiation will make the branch problem no longer an issue if we take the derivative of its logarithm, we write $\alpha=\alpha_{(a=0)}$ and $\beta=\beta_{(a=0)}$ for convenience,
\begin{align}
	\frac{2\pi i}{k}\tau(f)&=\log\alpha+\log f+\log(1+\frac{\beta}{\alpha}f+\frac{1}{\alpha}f\phi(f)),\notag\\
	\frac{2\pi i}{k}\tau_{f}&=\frac{1}{f}+\frac{\frac{\beta}{\alpha}+\frac{1}{\alpha}(\phi(f)+f\phi'(f))}{1+\frac{\beta}{\alpha}f+\frac{1}{\alpha}f\phi(f)}=\frac{1}{f}+\frac{\beta}{\alpha}+\phi_1(f),\notag\\
	\log{\frac{2\pi i}{k}}+\log\tau_f&=\log\frac{1}{f}+\log\left(1+\frac{\beta}{\alpha}f+f\phi_1(f)\right),\notag\\
	\frac{\tau_{ff}}{\tau_f}&=-\frac{1}{f}+\frac{\frac{\beta}{\alpha}+\phi_2(f)}{1+\frac{\beta}{\alpha}f+f\phi_1(f)}=-\frac{1}{f}+\frac{\beta}{\alpha}+\phi_3(f)\notag,
\end{align}
where each $\phi_l(f)$, $l=1,2,3$, is regular at $a=0$, and $\phi_l(0)=0$. Therefore the Schwarzian derivative has expansion around $a=0$ as the following equation
\begin{align}
\{\tau,f\}&=2\left(\frac{\tau_{ff}}{\tau_f}\right)_f-\left(\frac{\tau_{ff}}{\tau_f}\right)^2,\notag\\
          &=\frac{1}{f^2}+\frac{2\beta}{\alpha}\frac{1}{f}+\phi_4(f),\label{eq:tau-f-schwarzian-expansion-special-case}
\end{align}
where $\phi_4(f)$ is regular at $a=0$. Recall that $\{\tau,f\}$ is invariant under any linear transformation on $\tau$, so we have the following equality
\begin{equation*}
	\{\tau(f),f\}=\{\frac{1}{\tau(f)-r},f\}.
\end{equation*}
For the special case $a=\infty$, equation \eqref{eq:unif-func-singularity-expansion-infty} implies that $\{\tau,f\}$ does not have a singularity at $a=\infty$. Therefore, due to Liouville's theorem, we have the following conclusion from equation \eqref{eq:tau-f-schwarzian-expansion-special-case} (see \cite[p. 201]{NehariConformalMap}),
\begin{equation}
\{\tau,f\}=\left\{
\begin{aligned}
&\sum_{j=1}^{n}\frac{1}{(f-a_j)^2}+\frac{2\beta_j/\alpha_j}{f-a_j}, \qquad\mbox{if }\infty\notin\{a_1,\ldots,a_n\},\\
&\sum_{j=1}^{n-1}\frac{1}{(f-a_j)^2}+\frac{2\beta_j/\alpha_j}{f-a_j}, \qquad\mbox{if }a_n=\infty.
\end{aligned}\right.
\end{equation}
\begin{comment}
	The holomorphic function must be reduced to a constant due to Liouville's theorem and the fact that the upper half plane and unit disk are identified.
\end{comment} 
From now on, we will fix one of the singularities to be $0$. Assume $f:\mathbb{H}\rightarrow\mathbb{CP}^1\setminus\{a_1=0,a_2,\ldots,a_n\}, \tau\mapsto f(\tau)$ is a covering map, then we have the following equation
\begin{equation}\label{eq:right-hand-side-of-relation}
	\{\tau,f\}=\frac{1}{f^2}+\frac{2\beta}{\alpha}\frac{1}{f}+\sum_{j=2}^{n}\left[\frac{1}{(f-a_j)^2}+\frac{2\beta_j}{\alpha_j}\frac{1}{f-a_j}\right],
\end{equation}
where $\alpha=\alpha_{(a_1=0)},\beta=\beta_{(a_1=0)}$, and $\alpha_j=\alpha_{a_j},\beta_j=\beta_{a_j}$, $j=2,\ldots,n$, are the coefficients of the first and second order leading terms in the analytic expansion \eqref{eq:schwarzian-uniformize-1} of the unifomizing function at the singularity $a_j$. It is worth to mention that $\alpha_j,\beta_j$ are uniquely determined by the given covering map $f$ due to its analytic property.

Let us recall Propositions \ref{prop:schwarzian-basic-properties} and \ref{prop:schwarzian-expression-qk-tau-relation}, we have the following equation
\begin{equation}
	\{\tau,f\}=-\frac{4\pi^2}{k^2}(1-\qk^2\{f,\qk\})\tau_f^2\notag
\end{equation}
and equation
%By the following equation from chain rule,
%\begin{equation}
%    1=f_{\qk}\cdot(\qk)_{\tau}\cdot \tau_f=\frac{2\pi i}{k}\qk f_{\qk}\tau_f,\notag
%\end{equation}
\begin{equation}
	\{\tau,f\}=\frac{1}{f^2_{\qk}}(\frac{1}{\qk^2}-\{f,\qk\})
\end{equation}
due to $\tau_f=\frac{k}{2\pi i}\qk^{-1}f_{\qk}^{-1}$ from chain rule. Therefore, we have the following relation that is connected by $\{\tau,f\}$,
\begin{equation}\label{eq:schwarzian-qk-relation}
	\frac{1}{f^2_{\qk}}(\frac{1}{\qk^2}-\{f,\qk\})=\frac{1}{f^2}+\frac{2\beta}{\alpha}\frac{1}{f}+\sum_{j=2}^{n}\left[\frac{1}{(f-a_j)^2}+\frac{2\beta_j}{\alpha_j}\frac{1}{f-a_j}\right].
\end{equation}
From equations \eqref{eq:schwarzian-f-qk-expansion-2} and \eqref{eq:schwarzian-1/fqk}, we have the following expansion of the left hand side of equation \eqref{eq:schwarzian-qk-relation},
\begin{equation}
   \frac{1}{A^{2}}\left[\frac{1}{q_{k}^{2}}-4\B\frac{1}{q_{k}}+(24\B^2-18\C_3)+\sum_{m=1}^{\infty}\tilde{P}_m(\B,\C_3,\ldots,\C_{m+3})\qk^m\right],\label{eq:schwarzian-tau-f-qk-expansion} 
\end{equation}
where the coefficient term $\tilde{P}_m(\B,\C_3,\ldots,\C_{m+3})$ is a polynomial in $\B,\C_3,\ldots,\C_{m+3}$ which has degree $1$ in $\C_{m+3}$ for $m\ge1$. % and degree $m+2$ in $\B$. 
For convenience, we denote the constant term $\tilde{P}_0(\B,\C_3)=(24\B^2-18\C_3)$.

On the punctured sphere $\mathbb{CP}^1\setminus\{a_1=0,a_2,\ldots,a_n\}$, the singularities are discrete since it is a set of finite points. Take a neighborhood $U$ which contains the one and only singularity $a_1=0$, then we have the following expansion in $U$,
\begin{equation}
\frac{1}{f}=\frac{1}{A}\frac{1}{\qk}[1-\B\qk+\sum_{m=2}^{\infty}Q^{(7)}_m(\B,\C_3,\ldots,\C_{m+1})\qk^m],
\end{equation}
where $Q^{(7)}_m$ is a polynomial in $\B,\C_3,\ldots,\C_{m+1}$ which has degree $1$ in $\C_{m+1}$ with constant coefficient. % and degree $m$ in $\B$.
And for other terms $\frac{1}{f-a_j}$, $j=2,\ldots,n$, it has expansion 
\begin{align}
	\frac{1}{f-a_{j}}=-\frac{1}{a_{j}}\cdot\left[1+\frac{A}{a_{j}}\qk+\sum_{m=2}^{\infty}Q^{(8)}_m(\frac{A}{a_j},\frac{c_2}{a_j},\ldots,\frac{c_{m}}{a_j})\qk^m\right],
\end{align}
where $c_2=B$, $Q^{(8)}_m$ is a polynomial in $\frac{A}{a_j},\frac{c_2}{a_j},\ldots,\frac{c_{m}}{a_j}$ which has degree $1$ in $\frac{c_m}{a_j}$ with constant coefficient. % and degree $m$ in $\frac{A}{a_j}$. 
Therefore equation \eqref{eq:right-hand-side-of-relation} on the right hand side of equation \eqref{eq:schwarzian-qk-relation} has expansion in $\qk$ as the following
\begin{equation}\label{eq:tau-f-right-hand-side-expansion-specific}
\begin{aligned}
\frac{1}{A^2}&\left\{\frac{1}{\qk^2}-4\B\frac{1}{\qk}+\left[5\B^2-2\C_3+A^2\sum_{j=2}^{n}\left(\frac{1}{a^2_j}-\frac{1}{a_j}\frac{2\beta_j}{\alpha_j}\right)\right]\right.\\
&\qquad+\left.\sum_{m=1}^{\infty}\left[Q^{(9)}_m(\B,\C_3,\ldots,\C_{m+3})+\sum_{j=2}^{n}Q^{(10)}_{j_m}(\frac{1}{a_j},c_1,\ldots,c_m)\right]\qk^m\right\}, 
\end{aligned}
\end{equation}
where $c_1=A,c_2=B$, and $Q^{(9)}_m(\B,\C_3,\ldots,\C_{m+3})$ is a polynomial in $\B,\C_3,\ldots,\C_{m+3}$ that has degree $1$ in $\C_{m+3}$ with constant coefficient; % and degree $m+2$ in $\B$; 
$Q^{(10)}_{j_m}(\frac{1}{a_j},c_1,\ldots,c_m)$ is a polynomial in $\frac{1}{a_j},c_1,\ldots,c_m$ which has degree $1$ in $c_m$ with constant coefficient. % and degree $m+2$ in $\frac{1}{a_j}$. 
If we consider each $c_m=\C_m\cdot A$ for $m\ge 1$, then $Q^{(10)}_{j_m}$ is a polynomial $Q^{(10)'}_{j_m}$ in $A,\B,\C_3,\ldots,\C_m$. We identify equation \eqref{eq:tau-f-right-hand-side-expansion-specific} with equation \eqref{eq:schwarzian-tau-f-qk-expansion} to get a set of equations
\begin{equation}\label{eq:schwarzian-analytic-equation-set}
	\tilde{P}_m(\B,\C_3,\ldots,\C_{m+3})=Q^{(9)}_m(\B,\C_3,\ldots,\C_{m+3})+\sum_{j=2}^{n}Q^{(10)'}_{j_m}(A,\B,\C_3,\ldots,\C_m),
\end{equation}
where $m\ge 1$. And the constant term gives equation
\begin{equation}
	24\B^2-18\C_3=5\B^2-2\C_3+A^2\sum_{j=2}^{n}\left(\frac{1}{a^2_j}-\frac{1}{a_j}\frac{2\beta_j}{\alpha_j}\right),
\end{equation}
thus the solution for $\C_3$ is given by the following equation
\begin{equation}
	\C_3=\C_3(A,\B)=\frac{1}{16}\left[19\B^2-A^2\sum_{j=2}^{n}\left(\frac{1}{a^2_j}-\frac{1}{a_j}\frac{2\beta_j}{\alpha_j}\right)\right].
\end{equation}
Notice that the left hand side of equation \eqref{eq:schwarzian-analytic-equation-set} is a polynomial in $\B,\C_3,\ldots,\C_{m+3}$ that has degree $1$ in $\C_{m+3}$ with constant coefficient, and the right hand side of equation \eqref{eq:schwarzian-analytic-equation-set} is a polynomial in $A,\B,\C_3,\ldots,\C_{m+3}$ that also has degree $1$ in $\C_{m+3}$ with constant coefficient. Let us start with $m=1$, equation \eqref{eq:schwarzian-analytic-equation-set} has one and the only one unknown term $\C_4$, we can solve for $\C_4$ and the solution is a polynomial in $\C_3,A$ and $\B$. The solution $\C_4(\C_3,A,\B)$ can be expressed as a polynomial $\C_4(A,\B)$ in $A,\B$ with constant coefficients since $\C_3=\C_3(A,\B)$. By induction, it is easy to conclude that $\C_m$ can be solved as a polynomial $\C_m(A,\B)$ in $A,\B$ with constant coefficients for $m\ge 3$, i.e.,
\begin{equation}
	\C_m=\C_m(A,\B,a_2,\ldots,a_n)=\C_m(A,\B),
\end{equation}
where $a_2,\ldots,a_n$ are the singularities. Therefore we can conclude the following theorem.
\begin{theorem}\label{thm:coefficient-depend-on-A-B}
	Let $f:\mathbb{H}\rightarrow\mathbb{CP}^1\setminus\{a_1=0,a_2,\ldots,a_n\}$ be a covering map, and the parabolic generator corresponding to $a_1=0$ fixes infinity, then the map $f$ can be uniquely determined up to the coefficients $A,B$ of the first two order leading terms of its expansion \eqref{eq:f-qk-expanion-with-0}, i.e., 
	\begin{equation}\label{eq:coefficient-depend-on-A-B}
		f/A=\qk+\B\qk^2+\C_3(A,\B)\qk^3+\sum_{m=4}^{\infty}\C_m(A,\B)\qk^m,
	\end{equation}
	where $\qk=\exp\{\frac{2\pi i}{k}\tau\}$, and $\C_m(A,\B)$ are polynomials in $A,\B=\frac{B}{A}$ with constant coefficients for $m\ge 3$.
\end{theorem}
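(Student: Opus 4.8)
The plan is to extract the coefficients $\C_m$ by matching the two Laurent expansions of $\{\tau,f\}$ order by order in $\qk$ and then to run an induction on $m$. Everything rests on the master identity \eqref{eq:schwarzian-qk-relation}, which equates the analytic expression for $\{\tau,f\}$ coming from the $\qk$-expansion of $f$ (Propositions \ref{prop:schwarzian-basic-properties}, \ref{prop:schwarzian-expression-qk-tau-relation} and \ref{prop:schwarzian-f-qk-expansion}) with the geometric expression \eqref{eq:right-hand-side-of-relation} forced by Liouville's theorem. After normalizing $f/A=\qk+\B\qk^2+\sum_{m\ge3}\C_m\qk^m$ and clearing the double pole by multiplying through by $A^2\qk^2$, both sides become genuine power series in $\qk$ whose coefficients are exactly the quantities recorded in \eqref{eq:schwarzian-tau-f-qk-expansion} and \eqref{eq:tau-f-right-hand-side-expansion-specific}.

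First I would dispose of the $\qk^{-2}$ and $\qk^{-1}$ terms, which agree identically and impose no constraint, and then treat the constant term. Equating the $\qk^0$ coefficients gives $24\B^2-18\C_3=5\B^2-2\C_3+A^2\sum_{j}\bigl(1/a_j^2-(2\beta_j/\alpha_j)/a_j\bigr)$; since $\C_3$ occurs on both sides with net coefficient $-16\neq0$, this solves explicitly for $\C_3=\C_3(A,\B)$, establishing both the base case of the induction and the stated formula.

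For the inductive step I would compare the $\qk^m$ coefficients for $m\ge1$, which is precisely relation \eqref{eq:schwarzian-analytic-equation-set}. The decisive structural point, already packaged in Proposition \ref{prop:schwarzian-f-qk-expansion} and the expansions \eqref{eq:schwarzian-tau-f-qk-expansion} and \eqref{eq:tau-f-right-hand-side-expansion-specific}, is that the top-index coefficient $\C_{m+3}$ enters both sides linearly while every other coefficient present has index $\le m+2$. Assuming inductively that $\C_3,\ldots,\C_{m+2}$ are already polynomials in $A,\B$ with constant coefficients (the constants depending only on the fixed data $a_j,\beta_j/\alpha_j$), relation \eqref{eq:schwarzian-analytic-equation-set} is a single linear equation for $\C_{m+3}$; solving it and substituting the inductive hypothesis shows that $\C_{m+3}$ is again a polynomial in $A,\B$ with constant coefficients. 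Since fixing $A$ and $B$ then determines every $c_m=A\,\C_m$, and hence $f$ itself, this simultaneously yields the asserted uniqueness up to $A,B$.

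The main obstacle I anticipate is verifying that the net coefficient of $\C_{m+3}$ in \eqref{eq:schwarzian-analytic-equation-set} is nonzero for every $m\ge0$, since solvability of the recursion hinges on it; the degree-one claims by themselves do not rule out that the two sides' $\C_{m+3}$-terms cancel. Tracking only the top-order contributions, on the analytic side $\C_{m+3}$ enters through $h=(f_{\qk}/A)^{-2}$ paired with $\qk^{-2}$ and through the leading term of $\{f,\qk\}$, whereas on the geometric side it enters solely through $1/f^2$. A short linearization of these factors should give the net coefficient $2-2(m+3)\bigl(1+(m+1)(m+2)\bigr)$, which specializes to $-16$ at $m=0$ and is manifestly negative, hence nonzero, for all $m\ge0$. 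Once this non-vanishing is confirmed, the induction proceeds mechanically.
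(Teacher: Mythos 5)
Your proposal is correct and follows essentially the same route as the paper: both expand the two sides of the Schwarzian identity \eqref{eq:schwarzian-qk-relation} in $\qk$, solve the constant term $24\B^2-18\C_3=5\B^2-2\C_3+A^2\sum_{j=2}^{n}\left(\frac{1}{a_j^2}-\frac{1}{a_j}\frac{2\beta_j}{\alpha_j}\right)$ for $\C_3(A,\B)$, and then induct on $m$ through equation \eqref{eq:schwarzian-analytic-equation-set}, in which $\C_{m+3}$ is the unique top-index unknown and enters linearly on both sides. Your one genuine addition is the explicit verification that the net coefficient of $\C_{m+3}$ equals $2-2(m+3)\bigl(1+(m+1)(m+2)\bigr)\le -16<0$ for all $m\ge 0$ (which checks out under the linearization $g=\qk+\epsilon\qk^{m+3}$: the geometric side contributes $-2$ via $1/f^2$, the analytic side $-2(m+3)$ via $f_{\qk}^{-2}$ and $-2(m+3)(m+2)(m+1)$ via $\{f,\qk\}$, with all cross terms entering at order $\qk^{m+1}$ or higher); this nonvanishing is exactly what the paper leaves implicit when it asserts that each equation \eqref{eq:schwarzian-analytic-equation-set} "can be solved" for its single unknown, so your argument closes that small gap rather than deviating from the paper's method.
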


Recall Theorem \ref{thm:metric-expression} in section \ref{section:qk-expansion-metric-formula} and Theorem \ref{thm:coefficient-depend-on-A-B}, they together imply our main result Theorem \ref{thm:into-main-1}, which will be concluded in the last section.%.                                                                                                                                                                                                                                                                                                

    \section{Ramification Point}\label{section:ramification-point}
\subsection{Introduction}
In section \ref{subsection:deck-group-generators}, we concluded that the deck transformation group $\Aut(f)$ is generated by $(n-1)$ \textit{parabolic} transformations from $\Aut(\mathbb{H})$. On the other hand, we will discover properties of such subgroups of $\mbox{GL}_2(\mathbb{R})$. Notice that $\gamma(\tau)=(\frac{\gamma}{\det{\gamma}})(\tau)$, so we will only consider $\mbox{SL}_2(\mathbb{R})$ from now. Consider $\mbox{SL}_2(\mathbb{R})$ with the normal $\mathbb{R}^4$ topology. The general matrix multiplication
$$\mbox{SL}_2(\mathbb{R})\times\mbox{SL}_2(\mathbb{R})\rightarrow\mbox{SL}_2(\mathbb{R}),\quad (\gamma_1,\gamma_2)\mapsto \gamma_1\gamma_2$$
and inversion
$$\mbox{SL}_2(\mathbb{R})\rightarrow\mbox{SL}_2(\mathbb{R}),\quad \gamma\mapsto\gamma^{-1}$$
are continuous maps.
\begin{definition}
	We say that $\mbox{SL}_2(\mathbb{R})$ acts on $\mathbb{H}$ continuously if the following map
	\begin{equation}
	\mbox{SL}_2(\mathbb{R})\times\mathbb{H}\rightarrow \mathbb{H},\qquad (\gamma,\tau)\mapsto\gamma(\tau)\notag
	\end{equation}
	is continuous.
\end{definition}

	In general, if $G$ is a subgroup of $\mbox{SL}_2(\mathbb{R})$. An orbit of a point $\tau\in\mathbb{H}$ is the set of images of $\tau$ under actions in $G$, i.e., 
	$$\mbox{Orbit}_G(\tau)=\{g(\tau)\,|\,g\in G\}.$$
	The stabilizer of a point $\tau\in\mathbb{H}$ are the elements in $G$ that fixes $\tau$, i.e., 
	$$\mbox{Stab}_G(\tau)=\{g\,|\,g(\tau)=\tau, g\in G\}.$$
	We say that two points $\tau,\tau'\in\mathbb{H}$ are equivalent under the action of $G$ if $\tau'\in\mbox{Orbit}_G(\tau)\tau$, i.e.,
	$$\tau\sim\tau'\qquad\mbox{if and only if}\qquad\tau'=g(\tau),\,\mbox{for some } g\in G.$$
	The set $\mbox{Orbit}_G(\tau)$ is the set of points that are equivalent to $\tau$ under the action of $G$.

\begin{definition}
	If $G$ is a discrete subgroup of $\mbox{SL}_2(\mathbb{R})$, define the quotient space $\mathbb{H}/G$ to be the space of equivalent classes of $\mathbb{H}$ under the action of $G$, i.e.,
	$$\mathbb{H}/G=\mathbb{H}/\sim,$$
	equipped the induced topology through the quotient map.
\end{definition}
Let $\mbox{SL}_2(\mathbb{R})/G$ to be the left cosets of $G$ in $\mbox{SL}_2(\mathbb{R})$, i.e.,
$$\mbox{SL}_2(\mathbb{R})/G=\bigcup_{h\in H}hG,$$
where $H$ is the left coset representation  of $G$ in $\mbox{SL}_2(\mathbb{R})$. The topology on $\mbox{SL}_2(\mathbb{R})/G$ is induced from the left group action. 

\begin{definition}\label{def:elliptic-point}
	A point $P\in\mathbb{H}$ is \textit{elliptic} of a discrete subgroup $G\subset\mbox{SL}_2(\mathbb{R})$ if it is invariant under the action of an \textit{elliptic} element in $G$. 
\end{definition}

The goal for this section is to show that a point $P\in\mathbb{H}/G$ is \textit{elliptic} if and only if the point $P\in\mathbb{H}/G$ is a ramification point.

\subsection{Discrete Group, Quotient Space and Ramification Point}
All the conclusions can be found in \cite[p. 2-20, Chapter 1]{ShimuraIntroToArith}. For the completion of this article, I still state them.
\begin{lemma}\label{lemma:fact1-SO2-homeomorphism}
	$\mbox{SO}_2(\mathbb{R})$ is compact in $\mbox{SL}_2(\mathbb{R})$. Furthermore, the map 
	\begin{equation}
	\tilde{\alpha}:\mbox{SL}_2(\mathbb{R})/\mbox{SO}_2(\mathbb{R})\rightarrow\mathbb{H},\qquad [\gamma]\mapsto \tilde{\alpha}([\gamma])=\gamma(i)
	\end{equation}
	is a homeomorphism.
\end{lemma}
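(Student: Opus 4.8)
The plan is to handle the two assertions separately: compactness of $\mbox{SO}_2(\mathbb{R})$ is an elementary topological observation, while the homeomorphism statement I would obtain from the orbit--stabilizer correspondence for the $\mbox{SL}_2(\mathbb{R})$-action on $\mathbb{H}$, supplemented by an explicit continuous section to handle the inverse map.

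First I would dispose of compactness. Inside $\mbox{SL}_2(\mathbb{R})\subset\mathbb{R}^4$, the subgroup $\mbox{SO}_2(\mathbb{R})$ is cut out by the closed conditions $d=a$, $b=-c$, $a^2+c^2=1$ (equivalently, it is $\mbox{SL}_2(\mathbb{R})$ intersected with the orthogonal group), so it is closed; since each entry of such a matrix has absolute value at most $1$, it is bounded, and Heine--Borel gives compactness. Equivalently one may note that $\theta\mapsto\left(\begin{array}{cc}\cos\theta & -\sin\theta\\ \sin\theta & \cos\theta\end{array}\right)$ exhibits $\mbox{SO}_2(\mathbb{R})$ as the continuous image of the compact interval $[0,2\pi]$.

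For the homeomorphism the two decisive computations are transitivity and the identification of the stabilizer of $i$. Given $\tau=x+iy\in\mathbb{H}$, the matrix $s(\tau)=\left(\begin{array}{cc}\sqrt{y} & x/\sqrt{y}\\ 0 & 1/\sqrt{y}\end{array}\right)\in\mbox{SL}_2(\mathbb{R})$ satisfies $s(\tau)(i)=\tau$, so the action is transitive; I would record $s$ for reuse as a section. Solving $\gamma(i)=i$ for $\gamma=\left(\begin{array}{cc}a & b\\ c & d\end{array}\right)$ forces $b=-c$ and $a=d$, which together with $ad-bc=1$ yields $a^2+c^2=1$, so $\mbox{Stab}_{\mbox{SL}_2(\mathbb{R})}(i)=\mbox{SO}_2(\mathbb{R})$. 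Consequently $\tilde{\alpha}$ is well defined on cosets (right multiplication by an element of $\mbox{SO}_2(\mathbb{R})$ is applied before $\gamma$ and fixes $i$), injective (if $\gamma(i)=\gamma'(i)$ then $\gamma^{-1}\gamma'$ fixes $i$, hence lies in $\mbox{SO}_2(\mathbb{R})$), and surjective (by transitivity).

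It then remains to verify continuity in both directions, which I expect to be the only genuinely delicate point. Since the denominator $ci+d$ of $\gamma(i)$ satisfies $|ci+d|^2=c^2+d^2\neq 0$ throughout $\mbox{SL}_2(\mathbb{R})$, the map $\gamma\mapsto\gamma(i)$ is continuous, and being constant on $\mbox{SO}_2(\mathbb{R})$-cosets it descends, by the universal property of the quotient topology, to a continuous $\tilde{\alpha}$. For the inverse I would deliberately avoid any ``continuous bijection of compact Hausdorff spaces'' shortcut, since neither space here is compact, and instead invoke the explicit section: $s:\mathbb{H}\to\mbox{SL}_2(\mathbb{R})$ is continuous in $(x,y)$, so $\pi\circ s$ is continuous, where $\pi$ is the quotient projection, and $\tilde{\alpha}(\pi(s(\tau)))=s(\tau)(i)=\tau$ identifies $\pi\circ s$ as $\tilde{\alpha}^{-1}$. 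Thus $\tilde{\alpha}^{-1}$ is continuous and $\tilde{\alpha}$ is a homeomorphism. The main obstacle is exactly this continuity of $\tilde{\alpha}^{-1}$, and the resolution is to exhibit the global continuous section $s$ rather than to appeal to a compactness argument.
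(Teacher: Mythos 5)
Your proof is correct: the stabilizer computation ($\gamma(i)=i$ forces $a=d$, $b=-c$, hence $a^2+c^2=1$), the nonvanishing of the denominator ($c=d=0$ would contradict $ad-bc=1$), the descent of $\gamma\mapsto\gamma(i)$ through the quotient topology, and the identity $s(\tau)(i)=\tau$ with $\det s(\tau)=1$ all check out, and the conclusion $\tilde{\alpha}^{-1}=\pi\circ s$ follows legitimately from $\tilde{\alpha}\circ(\pi\circ s)=\mathrm{id}_{\mathbb{H}}$ plus bijectivity of $\tilde{\alpha}$. Your route, however, differs from the paper's: the paper does not prove the lemma at all, deferring to Shimura \cite[p.~2, Theorem 1.1]{ShimuraIntroToArith} and Milne \cite[p.~25, Proposition 2.1(d)]{JSMilne}, and the author's draft proof (left commented out in the source) handles the one delicate step---continuity of the inverse---by showing $\tilde{\alpha}$ is \emph{open}, via an implicit function theorem analysis of the relation $(x^2+y^2)d-bx-ay=0$ defining the fiber over $x+yi$. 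You replace that entire openness computation with the explicit global continuous section $s(\tau)=\left(\begin{smallmatrix}\sqrt{y} & x/\sqrt{y}\\ 0 & 1/\sqrt{y}\end{smallmatrix}\right)$ (the $NA$ part of the Iwasawa decomposition), which does double duty: it proves transitivity and immediately exhibits $\tilde{\alpha}^{-1}$ as the continuous map $\pi\circ s$. This buys a shorter, fully self-contained argument with no multivariable calculus; the draft's openness route is what one falls back on for actions admitting no global section, but here the section exists and your use of it is the cleaner choice. Your explicit flagging that a compact-Hausdorff shortcut is unavailable (neither space is compact) is also a point the commented draft never addresses.
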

\begin{proof}
	See \cite[p. 2, Theorem 1.1]{ShimuraIntroToArith} or \cite[p. 25, Proposition 2.1 (d)]{JSMilne}
\end{proof}

\begin{lemma}\label{lemma:fact2-discrete-compact-finite}
	Assume $\Gamma$ is a discrete subgroup of $\mbox{SL}_2(\mathbb{R})$, if $V_1,V_2$ are any compact subsets in $\mathbb{H}$, then the set $\{\gamma\in\Gamma|\gamma(V_1)\cap V_2\neq\emptyset\}$ is finite.
\end{lemma}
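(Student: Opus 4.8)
The plan is to reduce the statement to the relative compactness of an auxiliary subset of $\mbox{SL}_2(\mathbb{R})$, combined with the fact that a discrete subgroup is closed. First I would set
\[
C=\{\gamma\in\mbox{SL}_2(\mathbb{R}) : \gamma(V_1)\cap V_2\neq\emptyset\},
\]
so that the set in question is exactly $\Gamma\cap C$. The entire argument then rests on showing that $C$ is relatively compact in $\mbox{SL}_2(\mathbb{R})$; once this is established, $\Gamma\cap C$ is a closed discrete subset (recall that a discrete subgroup of a topological group is closed) of the compact set $\overline{C}$, hence finite.

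To prove that $C$ is relatively compact I would lift points of $\mathbb{H}$ back into the group using the structure behind Lemma \ref{lemma:fact1-SO2-homeomorphism}. The Iwasawa-type map $s:\mathbb{H}\rightarrow\mbox{SL}_2(\mathbb{R})$ sending $z=x+iy$ to the upper-triangular matrix $\left(\begin{smallmatrix} y^{1/2} & xy^{-1/2}\\ 0 & y^{-1/2}\end{smallmatrix}\right)$ is continuous and satisfies $s(z)(i)=z$; consequently $s(V_1)$ and $s(V_2)$ are compact subsets of $\mbox{SL}_2(\mathbb{R})$ that surject onto $V_1$ and $V_2$ under $\gamma\mapsto\gamma(i)$. (Alternatively, one may take the preimage of $\tilde{\alpha}^{-1}(V_i)$, which is compact because $\mbox{SO}_2(\mathbb{R})$ is compact.)

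Now, given $\gamma\in C$, choose $z\in V_1$ with $w:=\gamma(z)\in V_2$, and set $g_1=s(z)$, $g_2=s(w)$. Then $g_1(i)=z$ and $g_2(i)=w=\gamma(z)=\gamma\big(g_1(i)\big)$, so $(g_2^{-1}\gamma g_1)(i)=i$, which by Lemma \ref{lemma:fact1-SO2-homeomorphism} (the stabilizer of $i$ is $\mbox{SO}_2(\mathbb{R})$) forces $g_2^{-1}\gamma g_1\in\mbox{SO}_2(\mathbb{R})$. Hence $\gamma\in s(V_2)\cdot\mbox{SO}_2(\mathbb{R})\cdot s(V_1)^{-1}$, and therefore
\[
C\subseteq s(V_2)\cdot\mbox{SO}_2(\mathbb{R})\cdot s(V_1)^{-1}.
\]
The right-hand side is the image of the compact set $s(V_2)\times\mbox{SO}_2(\mathbb{R})\times s(V_1)^{-1}$ under the continuous multiplication map of $\mbox{SL}_2(\mathbb{R})$, hence compact, so $C$ is relatively compact.

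Finally I would conclude as follows: $\Gamma$ is discrete, hence closed, in $\mbox{SL}_2(\mathbb{R})$, so $\Gamma\cap\overline{C}$ is compact and, being a subset of the discrete set $\Gamma$, discrete; a compact discrete space is finite. Since $\{\gamma\in\Gamma : \gamma(V_1)\cap V_2\neq\emptyset\}=\Gamma\cap C\subseteq\Gamma\cap\overline{C}$, this set is finite. The main obstacle is the relative compactness of $C$, that is, the \emph{sandwiching} of each such $\gamma$ between two compact pieces via the $\mbox{SO}_2(\mathbb{R})$-coset decomposition; everything else is a formal consequence of discreteness and compactness.
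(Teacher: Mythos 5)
Your proposal is correct and is essentially the argument the paper delegates to its citations (Shimura, Prop.~1.6; Milne, Prop.~2.4): you sandwich $C=\{\gamma:\gamma(V_1)\cap V_2\neq\emptyset\}$ inside the compact set $s(V_2)\cdot\mbox{SO}_2(\mathbb{R})\cdot s(V_1)^{-1}$ using a continuous section of $g\mapsto g(i)$ together with $\mbox{Stab}(i)=\mbox{SO}_2(\mathbb{R})$, and then use that a discrete subgroup is closed, so its intersection with a compact set is finite. Since the paper gives no inline proof, your write-up is a complete, self-contained version of exactly that standard approach, with no gaps.
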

\begin{proof}
	See \cite[p. 3, Proposition 1.6]{ShimuraIntroToArith} or \cite[p. 26, Proposition 2.4]{JSMilne}.
\end{proof}

\begin{prop}\label{prop:discreteneighborhood}
	If $\Gamma\in\mbox{SL}_2(\mathbb{R})$ is a discrete subgroup, then for any $\tau\in\mathbb{H}$, there is a neighborhood $U$ of $\tau$ such that if $\gamma\in\Gamma$ and $U\cap\gamma(U)\neq\emptyset$, then $\gamma(\tau)=\tau$.
\end{prop}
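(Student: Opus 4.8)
The plan is to reduce the proposition to a statement about a \emph{finite} set of group elements by invoking Lemma \ref{lemma:fact2-discrete-compact-finite}, and then to separate $\tau$ from its images under the finitely many group elements that do not fix it, using only the Hausdorff property of $\mathbb{H}$ together with the continuity of the action.

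First I would fix a relatively compact open neighborhood $U_0$ of $\tau$, say an open hyperbolic disk, so that its closure $\overline{U_0}$ is compact. Applying Lemma \ref{lemma:fact2-discrete-compact-finite} with $V_1=V_2=\overline{U_0}$, the set
\begin{equation*}
F=\{\gamma\in\Gamma\,:\,\gamma(\overline{U_0})\cap\overline{U_0}\neq\emptyset\}
\end{equation*}
is finite. Every $\gamma\in\Gamma$ that we must worry about — namely those with $U\cap\gamma(U)\neq\emptyset$ for some neighborhood $U\subseteq\overline{U_0}$ — necessarily lies in $F$, since $U\cap\gamma(U)\neq\emptyset$ forces $\overline{U_0}\cap\gamma(\overline{U_0})\neq\emptyset$. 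Thus it suffices to control the finitely many elements of $F$, and among these only the ones with $\gamma(\tau)\neq\tau$ cause trouble.

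Next, for each $\gamma\in F$ with $\gamma(\tau)\neq\tau$, I would separate the two distinct points $\tau$ and $\gamma(\tau)$ by disjoint open sets $W_\tau$ and $W_{\gamma(\tau)}$, which is possible because $\mathbb{H}$ is Hausdorff. Since $\gamma$ acts as a homeomorphism of $\mathbb{H}$, the set $U_\gamma:=W_\tau\cap\gamma^{-1}(W_{\gamma(\tau)})$ is an open neighborhood of $\tau$ satisfying $\gamma(U_\gamma)\subseteq W_{\gamma(\tau)}$, whence $U_\gamma\cap\gamma(U_\gamma)\subseteq W_\tau\cap W_{\gamma(\tau)}=\emptyset$. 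Taking the intersection
\begin{equation*}
U=U_0\cap\bigcap_{\substack{\gamma\in F\\ \gamma(\tau)\neq\tau}}U_\gamma,
\end{equation*}
which is a finite intersection of open neighborhoods of $\tau$ and hence again an open neighborhood of $\tau$ contained in $\overline{U_0}$, yields the desired set: if $\gamma\in\Gamma$ satisfies $U\cap\gamma(U)\neq\emptyset$ then $\gamma\in F$ as noted, and if additionally $\gamma(\tau)\neq\tau$ then $U\subseteq U_\gamma$ gives the contradiction $U\cap\gamma(U)\subseteq U_\gamma\cap\gamma(U_\gamma)=\emptyset$; therefore $\gamma(\tau)=\tau$.

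The only genuinely substantive input is the finiteness from Lemma \ref{lemma:fact2-discrete-compact-finite}; everything after that is point-set topology. Accordingly, the step I would watch most carefully is the bookkeeping of inclusions — ensuring $U\subseteq\overline{U_0}$ so that a nonempty overlap genuinely places $\gamma$ in $F$, and that passing to the smaller $U\subseteq U_\gamma$ preserves the disjointness $U_\gamma\cap\gamma(U_\gamma)=\emptyset$. These are routine but are where an off-by-one in the containments could break the argument, so I would state them explicitly rather than leave them implicit.
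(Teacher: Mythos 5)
Your proof is correct and follows essentially the same route as the paper's (the paper defers to Shimura, Proposition~1.7, whose standard argument is exactly this): fix a compact neighborhood of $\tau$, invoke Lemma~\ref{lemma:fact2-discrete-compact-finite} to reduce to a finite set $F$ of group elements, separate $\tau$ from $\gamma(\tau)$ by disjoint open sets for the finitely many $\gamma\in F$ not fixing $\tau$, and take the finite intersection $U_0\cap\bigcap_\gamma\bigl(W_\tau\cap\gamma^{-1}(W_{\gamma(\tau)})\bigr)$. Your explicit bookkeeping that $U\subseteq\overline{U_0}$ (so that a nonempty overlap really forces $\gamma\in F$) is a sound, slightly more careful rendering of the same construction.
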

\begin{proof}
	See \cite[p. 3, Proposition 1.7]{ShimuraIntroToArith} or \cite[p. 27, Proposition 2.5 (b)]{JSMilne}
\end{proof}
\begin{comment}
	\begin{proof}
	For any $\tau\in\mathbb{H}$, let $V$ be a compact neighborhood of $\tau$ in $\mathbb{H}$. By lemma \ref{lemma:fact2-discrete-compact-finite}, we assume there is a finite subset $\{\gamma_1,\ldots,\gamma_l\}\subseteq\Gamma$ such that $\gamma(V)\cap V\neq\emptyset$. Suppose $\{\gamma_1,\ldots,\gamma_s\}$ are the ones that fixes $\tau$, i.e., $\gamma_i(\tau)=\tau$ for $1\le i\le s$. And for any element $\gamma_i$ which is from the rest of the set $\{\gamma_{s+1},\ldots,\gamma_l\}$ does not fix $\tau$, i.e.,  $\gamma_i(\tau)\neq\tau(s+1\le i\le l)$. Take some disjoint neighborhoods $U_i^{(1)}$ of $\tau$ and $U_i^{(2)}$ of $\gamma_i(\tau)$ in $V$, consider the intersection
	$$U=\cap_{i=s+1}^l (U_i^1\cap\gamma_i^{-1}(U_i^2)),$$
	then $U$ is the neighborhood we are looking for.
	\end{proof}
\end{comment}

Next, we are able to show that if $\Gamma$ is a discrete subgroup of $\mbox{SL}_2(\mathbb{R})$, a point $[\tau]=[\tau_0]\in\mathbb{H}/\Gamma$ is ramified if and only if following condition
\begin{equation}\label{eq:elliptic-point-equivalent-condition}
	\mbox{Stab}_{\Gamma}(\tau)-\{I\}\neq \emptyset
\end{equation}
holds, for any $\tau\in[\tau_0]\subset\mathbb{H}$. Recall Definition \ref{def:elliptic-point}, a point $P$ satisfying condition \eqref{eq:elliptic-point-equivalent-condition} if and only if $P$ is an \textit{elliptic} point. The following proposition can be concluded from \cite[p. 8, Proposition 1.18]{ShimuraIntroToArith}, we will present proof here since it is not obvious.

\begin{prop}\label{proposition-5.7}
	Let $\Gamma$ be a discrete subgroup of $\mbox{SL}_2(\mathbb{R})$. Then on the quotient space $\mathbb{H}/\Gamma$, a point $\tau\in\mathbb{H}$ is ramified if and only if it is an \textit{elliptic} point of $\Gamma$.
\end{prop}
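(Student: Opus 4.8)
The plan is to analyze the quotient map $\pi:\mathbb{H}\to\mathbb{H}/\Gamma$ in a small neighborhood of $\tau$ and to match its ramification behaviour with the order of the stabilizer $\mathrm{Stab}_{\Gamma}(\tau)$. First I would record that this stabilizer is finite: taking $V_1=V_2=\{\tau\}$ in Lemma \ref{lemma:fact2-discrete-compact-finite} shows that $\mathrm{Stab}_{\Gamma}(\tau)=\{\gamma\in\Gamma:\gamma(\tau)=\tau\}$ is a finite set (this also follows from Lemma \ref{lemma:fact1-SO2-homeomorphism}, since the full stabilizer of an interior point is a conjugate of the compact group $\mathrm{SO}_2(\mathbb{R})$, and a discrete subgroup of a compact group is finite). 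Using Proposition \ref{prop:discreteneighborhood} I would choose a neighborhood $U_0$ of $\tau$ with $\gamma(U_0)\cap U_0\neq\emptyset\Rightarrow\gamma(\tau)=\tau$, and then replace it by the finite intersection $U=\bigcap_{g\in\mathrm{Stab}_{\Gamma}(\tau)}g(U_0)$, a $\mathrm{Stab}_{\Gamma}(\tau)$-stable neighborhood of $\tau$. The defining property of $U_0$ guarantees that two points of $U$ are $\Gamma$-equivalent exactly when they lie in the same $\mathrm{Stab}_{\Gamma}(\tau)$-orbit, so $\pi$ descends to a homeomorphism $U/\mathrm{Stab}_{\Gamma}(\tau)\xrightarrow{\sim}\pi(U)$ onto an open neighborhood of $[\tau]$.

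Next I would put the local action into normal form. Conjugating by the biholomorphism $\mathbb{H}\to\mathbb{D}$ of \eqref{eq:cayley-transformation}, post-composed with an automorphism of $\mathbb{D}$ that carries $\tau$ to the origin, the stabilizer becomes a finite subgroup of $\mathrm{Aut}(\mathbb{D})$ fixing $0$; by Proposition \ref{prop:Classification-SL2} every nontrivial such element is \emph{elliptic}, and fixing $0$ forces it to act as a rotation $z\mapsto e^{i\theta}z$. A finite group of rotations is cyclic, say of order $e=|\mathrm{Stab}_{\Gamma}(\tau)|$ (working in $\mathrm{PSL}_2(\mathbb{R})$, so that $-I$, which acts trivially on $\mathbb{H}$, is discarded), generated by $z\mapsto e^{2\pi i/e}z$. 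In this model the quotient map is realized by $z\mapsto z^{e}$, the standard branched self-cover of the disk.

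With this normal form in hand the equivalence follows by inspecting two cases. If $\tau$ is \emph{not} elliptic then, by Definition \ref{def:elliptic-point}, $\mathrm{Stab}_{\Gamma}(\tau)$ contains no elliptic element; since by Proposition \ref{prop:Classification-SL2} every nontrivial element of $\mathrm{PSL}_2(\mathbb{R})$ fixing an interior point of $\mathbb{H}$ is elliptic, the stabilizer must be trivial in $\mathrm{PSL}_2(\mathbb{R})$, so $e=1$ and $\pi|_{U}$ is a homeomorphism; hence $\pi$ is a local biholomorphism at $\tau$ and $[\tau]$ is unramified. Conversely, if $\tau$ is elliptic then $e>1$, the local model $z\mapsto z^{e}$ has a genuine branch point of order $e$ at the origin, and therefore $[\tau]$ is a ramification point of $\pi$ with ramification index $e$.

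The main obstacle is the second paragraph: one must verify cleanly that the finite stabilizer is cyclic and acts by rotations, and then confirm that $\pi$ is, under the normal form, literally the map $z\mapsto z^{e}$ rather than merely abstractly isomorphic to a quotient, since it is this identification that licenses calling $[\tau]$ a ramification point in the Riemann-surface sense. A related bookkeeping point that must be handled with care is the role of $-I$: because $-I\in\mathrm{SL}_2(\mathbb{R})$ acts trivially on $\mathbb{H}$, both the condition \eqref{eq:elliptic-point-equivalent-condition} and the ramification count should be read in $\mathrm{PSL}_2(\mathbb{R})$, so that the ramification index equals the order of the image of $\mathrm{Stab}_{\Gamma}(\tau)$ there.
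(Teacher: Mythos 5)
Your proposal is correct and follows essentially the same route as the paper's proof: one direction via Proposition \ref{prop:discreteneighborhood} (a sufficiently small neighborhood meets its nontrivial translates only when $\tau$ is fixed, so a non-elliptic point is unramified), and the other via finiteness of $\mbox{Stab}_{\Gamma}(\tau)$ obtained by conjugating into $\mbox{SO}_2(\mathbb{R})$ and intersecting a compact set with a discrete one. In fact you go a bit beyond the paper, which simply asserts that an elliptic $\tau$ is ramified of index $m=\#\mbox{Stab}_{\Gamma}(\tau)$: your cyclic-rotation normal form realizing the quotient locally as $z\mapsto z^{e}$, together with the observation that $-I$ acts trivially so the index is the order of the image of the stabilizer in $\mbox{PSL}_2(\mathbb{R})$, supplies the justification (and the correct count when $-I\in\Gamma$) that the paper leaves implicit.
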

\begin{proof}
	If $\tau\in\mathbb{H}$ is not an elliptic point, then by Proposition \ref{prop:discreteneighborhood}, there is a neighborhood $U$ of $\tau$ such that $\gamma(U)\cap U=\emptyset$, for any $\gamma\in\Gamma$. It implies that $\tau$ is not ramified. If $\tau\in\mathbb{H}$ is an elliptic point of $\Gamma$, assume there is an elliptic element $\gamma\in\Gamma$ such that $\gamma(\tau)=\tau$. Let $\sigma\in\mbox{SL}_2(\mathbb{R})$ such that $\sigma(i)=\tau$, then the composition $\sigma^{-1}\circ\gamma\circ\sigma\in\mbox{Stab}(i)=\mbox{SO}_2(\mathbb{R})$, so $<\sigma^{-1}\circ\gamma\circ\sigma>=\sigma^{-1}\circ<\gamma>\circ\sigma$ is a subgroup of the conjugate $\sigma^{-1}\Gamma\sigma$, which is still a discrete subgroup of $\mbox{SL}_2(\mathbb{R})$. Also we have the following correspondence relation
	\begin{equation*}
	<\gamma>\cong \sigma^{-1}<\gamma>\sigma=\mbox{SO}_2(\mathbb{R})\bigcap\sigma^{-1}\Gamma\sigma,
	\end{equation*}
	the right hand side is an intersection of a compact set and a discrete set, so it has to be finite. Therefore if $m=\#\mbox{Stab}_{\Gamma}(\tau)$, $\tau$ is a ramification point of index $m$.
\end{proof}%                                                                                                                                                                                                                                                         
    \section{Modular Group}\label{section:modular-group}
\subsection{The Full Modular Group $\Gamma(1)$}
In this section, we will start with introducing the full modular group.
\begin{definition}
	The full modular group $\Gamma(1)$ is defined to be the image of $\mbox{SL}_2(\mathbb{Z})$ by identifying $+\gamma$ and $-\gamma$ for any element $\gamma\in\mbox{SL}_2(\mathbb{R})$. Equivalently, it is the same as the following definition:
	\begin{equation}
		\Gamma(1)=\mbox{PSL}_2(\mathbb{Z})=\mbox{SL}_2(\mathbb{Z})/\pm I.\notag
	\end{equation}
\end{definition}
	
In this article, we focus on the principal congruence subgroups of $\Gamma(1)$.

\begin{definition}\label{def:principal-congruence-subgroup}
	The principle congruence subgroup of level $N$ is defined as the following:
	\begin{equation}
		\Gamma(N)=\left\{\left(\begin{array}{cc}
		a & b\\
		c & d
		\end{array}\right)\in\mbox{SL}_2(\mathbb{Z})\left|\left(\begin{array}{cc}
			a & b\\
			c & d
		\end{array}\right)\equiv\pm\left(\begin{array}{cc}
		 1 & 0\\
		0 &  1
	\end{array}\right)\,(\mbox{mod}\,N)\right\}\right. /\pm I.\notag
\end{equation}
\end{definition}

The goal is to determine all the principal congruence subgroups $\Gamma(N)$ such that they are candidates for $\Aut(f)$, i.e., $\mathbb{H}/\Aut(f)=\mathbb{H}/\Gamma(N)\cong\punctured$ for some $n\ge 3$. Notice that $\punctured$ does not have any ramification point, and the compactification of $\punctured$ is the Riemann sphere, which is of genus $zero$. We will use these two properties to determine all suitable $\Gamma(N)$. First, we will use the \textit{fundamental domain} of $\Gamma(1)$ to locate all ramification points of $\mathbb{H}/\Gamma(1)$.
\begin{definition}\label{def:fundamental domain}
	The \textit{fundamental domain} for a discrete subgroup $\Gamma\subseteq\mbox{SL}_2(\mathbb{R})$ is a connected open subset $D$ of $\mathbb{H}$ such that every pair of points in $D$ are inequivalent under $\Gamma$, and meanwhile $\mathbb{H}\subseteq\bigcup_{\gamma\in\Gamma}[\gamma(\overline{D})]$. 
\end{definition}

These conditions are equivalent to $D\rightarrow\mathbb{H}/\Gamma$ is injective and $\overline{D}\rightarrow\mathbb{H}/\Gamma$ is surjective. Now we recall the \textit{fundamental domain} of $\Gamma(1)$ (see \cite[p. 16]{ShimuraIntroToArith}, \cite[p. 19, Proposition 1.2.2]{BumpAuto} or \cite[p. 78-79]{SerreJP}).

\begin{theorem}\label{fact:fundamental-domain-Gamma-1}
	The fundamental domain of $\Gamma(1)$ is the set
	\begin{equation}
	D=\{z=x+yi\,|\,|z|>1,|x|<\frac{1}{2},y>0\}.
	\end{equation}
\end{theorem}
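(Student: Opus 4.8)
The plan is to verify directly the two defining properties of a fundamental domain from Definition \ref{def:fundamental domain}: that the composite $\overline{D}\to\mathbb{H}/\Gamma(1)$ is surjective, i.e. every point of $\mathbb{H}$ is $\Gamma(1)$-equivalent to a point of $\overline{D}$, and that $D\to\mathbb{H}/\Gamma(1)$ is injective, i.e. no two distinct points of the open set $D$ are equivalent. Throughout I use the two elements $T=\begin{pmatrix}1 & 1\\ 0 & 1\end{pmatrix}$ and $S=\begin{pmatrix}0 & -1\\ 1 & 0\end{pmatrix}$ of $\Gamma(1)$, which act by $T(z)=z+1$ and $S(z)=-1/z$, together with the standard identity $\operatorname{Im}(\gamma z)=\operatorname{Im}(z)/|cz+d|^2$ valid for $\gamma=\begin{pmatrix}a & b\\ c & d\end{pmatrix}$.

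For surjectivity, fix $z\in\mathbb{H}$. First I would observe that as $(c,d)$ ranges over the coprime integer pairs occurring as bottom rows of elements of $\mathrm{SL}_2(\mathbb{Z})$, the number $cz+d$ ranges within the lattice $\mathbb{Z}z+\mathbb{Z}\subset\mathbb{C}$, which is discrete because $1$ and $z$ are $\mathbb{R}$-linearly independent; hence $|cz+d|$ is bounded below by a positive constant away from $0$, and consequently $\operatorname{Im}(\gamma z)$ attains a maximum over $\gamma\in\Gamma(1)$, say at $\gamma_0$. Applying a suitable power $T^n$, which leaves the imaginary part unchanged, I may arrange that the representative $w=T^n\gamma_0(z)$ satisfies $|\operatorname{Re}(w)|\le \tfrac{1}{2}$. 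I then claim $|w|\ge 1$: were $|w|<1$, the point $S(w)=-1/w$ would have $\operatorname{Im}(Sw)=\operatorname{Im}(w)/|w|^2>\operatorname{Im}(w)=\operatorname{Im}(\gamma_0 z)$, contradicting maximality. Thus $w\in\overline{D}$.

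For injectivity, suppose $z,z'\in D$ with $z'=\gamma(z)$ for some $\gamma=\begin{pmatrix}a & b\\ c & d\end{pmatrix}\in\Gamma(1)$, and assume without loss of generality that $\operatorname{Im}(z')\ge\operatorname{Im}(z)$, equivalently $|cz+d|\le 1$. Every point of $D$ satisfies $\operatorname{Im}(z)>\sqrt{3}/2$, since $|\operatorname{Re}(z)|<\tfrac12$ and $|z|>1$ force $\operatorname{Im}(z)^2>\tfrac34$; hence $|cz+d|^2\ge c^2\operatorname{Im}(z)^2>\tfrac34 c^2$ forces $c\in\{-1,0,1\}$. If $c=\pm1$, then $|cz+d|\le 1$ gives $(\operatorname{Re}(z)\pm d)^2\le 1-\operatorname{Im}(z)^2<\tfrac14$, so $|\operatorname{Re}(z)\pm d|<\tfrac12$; combined with $|\operatorname{Re}(z)|<\tfrac12$ this yields $d=0$ and then $|z|^2\le 1$, contradicting $|z|>1$. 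Therefore $c=0$, whence $ad=1$ gives $a=d=\pm1$ and $\gamma$ acts as the translation $z\mapsto z+b$; as $|\operatorname{Re}(z)|,|\operatorname{Re}(z')|<\tfrac12$ this forces $b=0$, so $\gamma=\pm I$ and $z=z'$.

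The step I expect to require the most care is the existence of a maximizer of $\operatorname{Im}(\gamma z)$ in the surjectivity argument: this is exactly the place where discreteness of $\Gamma(1)$, equivalently of the lattice $\mathbb{Z}z+\mathbb{Z}$, must be invoked to guarantee $\inf_{\gamma}|cz+d|>0$, so that the supremum of $\operatorname{Im}(\gamma z)$ is genuinely attained rather than merely approached; once the maximizer exists, the reduction into $\overline{D}$ and the finite injectivity case check are elementary. I would also note in passing that carrying out the surjectivity reduction using only the subgroup $\langle S,T\rangle$ yields, as a byproduct, the classical fact that $S$ and $T$ generate $\Gamma(1)$, though this generation statement is not logically needed for the fundamental-domain claim itself.
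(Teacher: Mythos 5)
Your proof is correct and complete: it is exactly the classical reduction argument (maximize $\operatorname{Im}(\gamma z)$ via discreteness of the lattice $\mathbb{Z}z+\mathbb{Z}$, translate by $T^n$, rule out $|w|<1$ using $S$, then the finite case check $c\in\{-1,0,1\}$ for injectivity). The paper itself gives no proof, deferring to Shimura, Bump, and Serre, and your argument is precisely the standard one in those cited references (e.g.\ Serre, \emph{A Course in Arithmetic}, pp.~78--79), so there is nothing to reconcile.
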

\begin{comment}
	\begin{remark}\label{rmk:fundamental-domain-Gamma-1}
	The half closure of $D$: 
	$$\{z=x+yi\,|\,-\frac{1}{2}\le x<\frac{1}{2},|z|>1,y>0\}\cup\{z=x+yi\,|\,-\frac{1}{2}\le x\le 0,|z|=1, y>0\}$$
	is a set of representatives of the quotient space $\mathbb{H}/\Gamma(1)$. Since $\Gamma(1)$ acts transitively on $\mathbb{Q}\cup\{\infty\}$, we will write 
	$$\mathbb{H}^*=\mathbb{H}\cup\mathbb{Q}\cup\{\infty\}.$$
	In fact, $\mathbb{H}^*/\Gamma\cong\mathbb{CP}^1$, where $\mathbb{CP}^1$ is the compactification of $\punctured$.
	\end{remark}
\end{comment}

\begin{prop}  % P14-P15 Shimura
	Let $p$ be the quotient map 
	$$p:\mathbb{H}\rightarrow\mathbb{H}/\Gamma(1),\qquad \tau\mapsto [p(\tau)].$$
	The \textit{elliptic} points in $\mathbb{H}/\Gamma(1)$ are either $[i]$ or $[\rho]$, i.e.,
	$$\{\mbox{ramification points of }\mathbb{H}\rightarrow\mathbb{H}/\Gamma(1)\}=p^{-1}(i)\bigcup p^{-1}(\rho),$$
	where $\rho=e^{\frac{\pi}{3}i}=\frac{1+\sqrt{3}i}{2}$, the root of $1$ of order $6$.
\end{prop}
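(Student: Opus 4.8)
The plan is to combine Proposition \ref{proposition-5.7} with the explicit fundamental domain of Theorem \ref{fact:fundamental-domain-Gamma-1} and reduce the claim to a finite computation. By Proposition \ref{proposition-5.7} a point of $\mathbb{H}/\Gamma(1)$ is ramified precisely when it is elliptic, so it is enough to locate the elliptic points. Since the elliptic points form a union of full $\Gamma(1)$-orbits, and since $\Gamma(1)$-equivalent points have conjugate (hence simultaneously trivial or nontrivial) stabilizers, it suffices to decide which points of the closed fundamental domain $\overline{D}$ carry a nontrivial stabilizer; every orbit meets $\overline{D}$ by Theorem \ref{fact:fundamental-domain-Gamma-1}.

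First I would set up the fixed-point condition. Suppose $\tau=x+iy\in\overline{D}$ is fixed by some $\gamma=\begin{pmatrix} a & b\\ c & d\end{pmatrix}\in\Gamma(1)$ with $\gamma\neq I$. Comparing imaginary parts in $\mathrm{Im}(\gamma\tau)=\mathrm{Im}(\tau)/|c\tau+d|^2=\mathrm{Im}(\tau)$ forces $|c\tau+d|=1$. If $c=0$ then $\gamma$ is parabolic or the identity and fixes no interior point of $\mathbb{H}$, so I may assume $c\neq 0$.

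Next I would bound the entries. Writing out
\begin{equation}
1=|c\tau+d|^2=c^2|\tau|^2+2cd\,x+d^2\ge c^2-|cd|+d^2\ge\tfrac{3}{4}c^2,\notag
\end{equation}
where I used $|\tau|\ge 1$ and $|x|\le\tfrac12$ on $\overline{D}$ (the last inequality being $(\tfrac12|c|-|d|)^2\ge 0$), I conclude $|c|=1$ and then $|d|\le 1$. This leaves only finitely many matrices, and for each the fixed points are the roots of $c\tau^2+(d-a)\tau-b=0$. Its discriminant equals $(a+d)^2-4$ after using $ad-bc=1$, so a root lies in $\mathbb{H}$ exactly when the trace satisfies $|a+d|<2$, i.e. $a+d\in\{-1,0,1\}$. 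Running through the cases $d=0$ and $|d|=1$, the non-real roots landing in $\overline{D}$ are exactly $\tau=i$ (fixed by $S=\begin{pmatrix} 0 & -1\\ 1 & 0\end{pmatrix}$, trace $0$) and the two corners $\rho=e^{\pi i/3}$, $\rho^2=e^{2\pi i/3}$ (fixed by trace $\pm1$ elements such as $\begin{pmatrix} 1 & -1\\ 1 & 0\end{pmatrix}$).

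Finally I would note that the two corners are identified in the quotient: $T\rho^2=\rho^2+1=\rho$, so $[\rho^2]=[\rho]$, while $i$ and $\rho$ are inequivalent. Hence the ramification locus is $p^{-1}(i)\cup p^{-1}(\rho)$, as claimed. I expect the only real work to be the finite case analysis in the previous step, together with the verification that the discriminant condition $(a+d)^2-4<0$ correctly detects the genuinely elliptic (non-real fixed point) elements; everything else is inequality bookkeeping on $\overline{D}$.
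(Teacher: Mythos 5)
Your proof is correct, and it is worth noting that it is genuinely more self-contained than what the paper does: the paper's own ``proof'' of this proposition is a citation to Shimura \cite[p.~14--15, section 1.4]{ShimuraIntroToArith}, so you have supplied the classical Serre-style argument that the paper outsources. (The LaTeX source does contain a commented-out draft proof, and it is an instructive cousin of yours: it starts from the trace condition $a+d\in\{0,\pm 1\}$, writes the fixed point explicitly as $\frac{a-d}{2c}+\frac{\sqrt{(a+d)^2-4}}{2c}$, and then filters by the possible imaginary parts $\frac{1}{c}$ and $\frac{\sqrt{3}}{2c}$ against $\overline{D}$, using $\operatorname{Im} z\ge\frac{\sqrt 3}{2}$ on $\overline{D}$ to force $c=1$. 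You instead extract $|c\tau+d|=1$ from the fixed-point condition and combine it with the fundamental-domain inequalities $|\tau|\ge 1$, $|x|\le\frac12$ to get $\frac34 c^2\le 1$, hence $|c|=1$ and $|d|\le 1$, \emph{before} solving any quadratic. Your ordering makes the finiteness of the search transparent and avoids the draft's slightly delicate step of enumerating admissible imaginary parts.) Your inequality chain is sound, the discriminant computation $(d-a)^2+4bc=(a+d)^2-4$ is right, and you correctly both exhibit stabilizing matrices for $i$ and $\rho$ (so these points really are elliptic) and identify $\rho^2$ with $\rho$ via $T$. Two small points to tighten when writing it up: justify the assertion that $i$ and $\rho$ are $\Gamma(1)$-inequivalent (quickest: their stabilizers in $\mathrm{PSL}_2(\mathbb{Z})$ have orders $2$ and $3$, and stabilizers of equivalent points are conjugate), and note that since $\Gamma(1)=\mathrm{PSL}_2(\mathbb{Z})$ you may normalize $c>0$, so $|c|=1$ really means $c=1$ in the final case analysis.
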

\begin{proof}
	See \cite[p. 14-15, section 1.4]{ShimuraIntroToArith}.
\end{proof}

\subsection{Genus Formula for Subgroups of Modular Group}
We consider a subgroup $\Gamma$ of $\Gamma(1)$ with finite index, then the quotient space $\mathbb{H}/\Gamma$ is composed by copies of $\mathbb{H}/\Gamma(1)$. The genus of $\mathbb{H}/\Gamma$ can be computed by applying the Riemann-Roch Theorem and the Riemann-Hurwitz Formula.
\begin{prop}\label{prop:genus-formula-subgroup-of-Gamma1}
	If $\Gamma$ is a subgroup of $\Gamma(1)$ with finite index $m$, then the genus $g$ of $\mathbb{H}/\Gamma$ is given by the following formula
	\begin{equation}\label{eq:genus-formula-subgroup-of-Gamma1}
	g=1+m/12-\nu_2/4-\nu_3/3-\nu_{\infty}/2,
	\end{equation}
	where $\nu_2$ is the number of ramification points of order $2$, $\nu_3$ is the number of ramification points of order $3$, $\nu_{\infty}$ is the number of cusps and $m=[\Gamma(1):\Gamma]$.
\end{prop}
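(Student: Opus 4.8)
The plan is to deduce the genus formula from the Riemann--Hurwitz formula applied to the natural branched covering of compact Riemann surfaces
\[
\pi:\; X(\Gamma):=\mathbb{H}^{*}/\Gamma \longrightarrow X(1):=\mathbb{H}^{*}/\Gamma(1),
\]
where $\mathbb{H}^{*}=\mathbb{H}\cup\mathbb{Q}\cup\{\infty\}$ is the extended upper half plane obtained by adjoining the cusps. First I would record that $X(1)\cong\mathbb{CP}^{1}$ has genus $0$ and that $\pi$ is holomorphic of degree $m=[\Gamma(1):\Gamma]$, since the fibre over a non-branch point has $m$ elements. The charts making $X(\Gamma)$ a Riemann surface are $z\mapsto z^{e}$ at an elliptic point of order $e$ and $q=\exp\{2\pi i\tau/h\}$ at a cusp of width $h$; with these charts $\pi$ is holomorphic, and its ramification is exactly what must be tallied.

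The second step is to locate the branch locus. By Proposition~\ref{proposition-5.7} a point of $\mathbb{H}$ ramifies in the quotient precisely when it is \emph{elliptic}, and the proposition preceding Proposition~\ref{prop:genus-formula-subgroup-of-Gamma1} identifies the elliptic points of $\Gamma(1)$ as the orbits of $i$ (order $2$) and $\rho$ (order $3$). Hence $\pi$ can branch only over $[i]$, $[\rho]$, and the single cusp $[\infty]$ of $X(1)$. The key local fact is multiplicativity of ramification indices in the tower $\mathbb{H}\to X(\Gamma)\to X(1)$: the map $\mathbb{H}\to X(1)$ has local degree $2$ at $i$ and $3$ at $\rho$, so for $P\in X(\Gamma)$ over $[i]$ arising from $\tau\in\mathbb{H}$ the ramification index of $\pi$ at $P$ is $2/\#\mbox{Stab}_{\Gamma}(\tau)$; thus $e_{P}=1$ exactly when $P$ is itself elliptic of order $2$, and $e_{P}=2$ otherwise, with the analogous dichotomy over $[\rho]$ (replacing $2$ by $3$).

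Third, I would convert these local indices into fibrewise sums via the degree identity $\sum_{P\mapsto Q}e_{P}=m$. If $\nu_{2}$ of the points over $[i]$ are elliptic (so $e_{P}=1$) and the rest have $e_{P}=2$, then there are $(m+\nu_{2})/2$ points over $[i]$ and
\[
\sum_{P\mapsto[i]}(e_{P}-1)=m-\frac{m+\nu_{2}}{2}=\frac{m-\nu_{2}}{2}.
\]
Likewise $\sum_{P\mapsto[\rho]}(e_{P}-1)=\tfrac{2}{3}(m-\nu_{3})$, while over the cusp the ramification index at a cusp $P$ equals its width $h_{P}$, so $\sum h_{P}=m$ gives $\sum(e_{P}-1)=m-\nu_{\infty}$, since every cusp of $X(\Gamma)$ lies above the unique cusp of $X(1)$. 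Substituting into $2g-2=m(2\cdot 0-2)+\sum_{P}(e_{P}-1)$ and collecting the coefficient of $m$ as $-2+\tfrac12+\tfrac23+1=\tfrac16$ yields $2g-2=m/6-\nu_{2}/2-2\nu_{3}/3-\nu_{\infty}$, which is the asserted formula upon dividing by $2$.

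The main obstacle I anticipate lies in the local analysis underpinning the second step: rigorously establishing that $\mathbb{H}\to X(1)$ has local degree $2$ at $i$ and $3$ at $\rho$ (using that the $\Gamma(1)$-stabilizers are cyclic of orders $2$ and $3$), and that the ramification index of $\pi$ above a special point is the ratio of stabilizer orders, so that remaining elliptic of the same order is equivalent to being unramified. One must also confirm that $X(\Gamma)$ is a smooth compact Riemann surface and that the cusp widths are well defined with $\sum h_{P}=m$; once these standard uniformizer facts are granted, the remainder is the elementary bookkeeping carried out above.
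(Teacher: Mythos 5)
Your proposal is correct and follows essentially the same route as the paper: the paper's proof is a citation to Shimura (Proposition 1.40) and Milne (Theorem 2.22), whose argument -- and the proof left commented out in the paper's own source -- is exactly your Riemann--Hurwitz computation for $\mathbb{H}^*/\Gamma\to\mathbb{H}^*/\Gamma(1)$ with the fibre counts $(m-\nu_2)/2$ over $[i]$, $\tfrac{2}{3}(m-\nu_3)$ over $[\rho]$, and $m-\nu_\infty$ over the cusp. Your treatment of the cusp fibre (every point over $[\infty]$ is a cusp, with $\sum h_P=m$) is in fact stated more cleanly than in the paper's commented-out version.
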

\begin{proof}
	See \cite[p. 23, Proposition 1.40]{ShimuraIntroToArith} or \cite[p. 37-38, Theorem 2.22]{JSMilne}.
\end{proof}

\subsection{Principle Congruence Subgroups}
Recall that our subject is the finitely punctured Riemann sphere, if we consider it as a quotient space $\mathbb{H}/\Gamma(N)$ for some suitable $N\in\mathbb{Z}$, the fact that the punctured Riemann sphere has genus $\it{zero}$ implies $g(\Gamma(N))=0<1$. The following theorem will show that $N=2,3,4,5$ are the only values such that $g(\Gamma(N))=0$.
\begin{theorem}\label{thm:genus-formula-Gamma-N}
	The genus of $\mathbb{H}/\Gamma(N)$ is given by the following formula:
	\begin{equation}\label{eq:genus-formula-Gamma-N}
	g(\Gamma(N))=\left\{\begin{array}{lc}
		0, & N=2,\\
		1+\frac{N-6}{24}\cdot N^2\prod_{p|N}(1-p^{-2}), & N\ge 3.
		\end{array}\right.
	\end{equation}
\end{theorem}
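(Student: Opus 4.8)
The plan is to apply the genus formula of Proposition~\ref{prop:genus-formula-subgroup-of-Gamma1} to the group $\overline{\Gamma(N)}\subseteq\Gamma(1)$, so that everything reduces to computing four quantities: the index $m=[\Gamma(1):\overline{\Gamma(N)}]$, the numbers $\nu_2,\nu_3$ of elliptic points of orders $2$ and $3$, and the number $\nu_\infty$ of cusps. Once these are in hand, \eqref{eq:genus-formula-subgroup-of-Gamma1} gives $g(\Gamma(N))$ after a short algebraic simplification, with the case $N=2$ kept separate throughout.

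First I would show $\nu_2=\nu_3=0$ for every $N\ge 2$, i.e.\ that $\Gamma(N)$ has no elliptic point; by Proposition~\ref{proposition-5.7} this amounts to showing $\overline{\Gamma(N)}$ has no element of finite order. For $N\ge 3$ this is Minkowski's lemma: the kernel of the reduction $\mathrm{SL}_2(\mathbb{Z})\to\mathrm{SL}_2(\mathbb{Z}/N\mathbb{Z})$ is torsion-free, and since $-I$ does not lie in this kernel for $N\ge 3$, the kernel injects into $\Gamma(1)$ with image $\overline{\Gamma(N)}$, which is therefore torsion-free. For $N=2$ I would argue directly: an elliptic $\gamma\in\mathrm{SL}_2(\mathbb{Z})$ has $\mathrm{tr}\,\gamma\in\{-1,0,1\}$, whereas $\gamma\equiv\pm I\pmod 2$ forces $\mathrm{tr}\,\gamma$ to be even, leaving only $\mathrm{tr}\,\gamma=0$, and a short computation modulo $4$ shows that no trace-zero matrix is congruent to $I$ modulo $2$. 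Hence $\nu_2=\nu_3=0$ and the formula collapses to $g(\Gamma(N))=1+m/12-\nu_\infty/2$.

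Next I would compute $m$ and $\nu_\infty$. The reduction map $\mathrm{SL}_2(\mathbb{Z})\to\mathrm{SL}_2(\mathbb{Z}/N\mathbb{Z})$ is surjective with kernel the level-$N$ group, so $[\mathrm{SL}_2(\mathbb{Z}):\Gamma(N)]=|\mathrm{SL}_2(\mathbb{Z}/N\mathbb{Z})|=N^3\prod_{p\mid N}(1-p^{-2})$. Passing to $\Gamma(1)=\mathrm{PSL}_2(\mathbb{Z})$ introduces the factor of $2$ responsible for the split in the statement: for $N\ge 3$ one has $-I\notin\Gamma(N)$, so $m=\tfrac12 N^3\prod_{p\mid N}(1-p^{-2})$, whereas for $N=2$ the matrix $-I$ already lies in $\Gamma(2)$ and $m=6$. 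For the cusps I would note that the stabilizer of $\infty$ in $\overline{\Gamma(N)}$ is generated by $\tau\mapsto\tau+N$, so the cusp at $\infty$ has width $N$; since $\Gamma(N)$ is normal in $\Gamma(1)$ and $\Gamma(1)$ acts transitively on $\mathbb{Q}\cup\{\infty\}$, every cusp has width $N$, and the widths sum to the index. Hence $\nu_\infty=m/N$, giving $\nu_\infty=\tfrac12 N^2\prod_{p\mid N}(1-p^{-2})$ for $N\ge 3$ and $\nu_\infty=3$ for $N=2$.

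Substituting into $g=1+m/12-\nu_\infty/2$ and using $\tfrac{N^3}{24}-\tfrac{N^2}{4}=\tfrac{N^2(N-6)}{24}$ yields $g(\Gamma(N))=1+\tfrac{N-6}{24}N^2\prod_{p\mid N}(1-p^{-2})$ for $N\ge 3$, while the case $N=2$ gives $g=1+6/12-3/2=0$. I expect the main obstacle to be bookkeeping rather than anything conceptual: the delicate point is correctly tracking the role of $-I$, which simultaneously forces the separate treatment of $N=2$ in both the index and the cusp count and accounts for the factor $\tfrac12$ distinguishing the two branches of the formula. The borderline torsion case $N=2$ in the elliptic-point argument is the other spot demanding care.
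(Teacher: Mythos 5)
Your proposal is correct and follows essentially the same route as the paper: set $\nu_2=\nu_3=0$ in the genus formula of Proposition~\ref{prop:genus-formula-subgroup-of-Gamma1}, combine the index $m$ from Proposition~\ref{prop:principle-congruence-index} with the cusp count $\nu_\infty=m/N$ from Proposition~\ref{prop:number-cusps-principal-subgroup}, and simplify, treating $N=2$ separately because $-I\in\Gamma(2)$. The only difference is that you supply proofs (Minkowski's lemma, the trace argument mod $4$, surjectivity of reduction, the cusp-width count) for the ingredients the paper simply cites to Shimura, and all of these details check out.
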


Recall that if $N>1$, $\Gamma(N)$ does not have \textit{elliptic} element, which is equivalent to the fact that $\mathbb{H}/\Gamma(N)$ does not contain any elliptic point, so it does not have any ramification point as well. Therefore $\nu_2=\nu_3=0$ in equation \eqref{eq:genus-formula-subgroup-of-Gamma1}, the genus is given by the following simplified formula 
\begin{equation}
g(\Gamma(N))=1+m/12-\nu_{\infty}/2,\qquad n\ge 2.\label{eq:genus-Gamma-N-formula-1}
\end{equation}

By the properties of elements in $\Gamma(N)$, the index $m$ of $\Gamma(N)$ in $\Gamma(1)$ is given by the following proposition. 
\begin{prop}\label{prop:principle-congruence-index}
	The index $m$ of a principle congruence group $\Gamma(N)$ in the full modular group is given by the formula
	\begin{equation}\label{eq:principle-congruence-index-1}
	m=(\Gamma(1):\Gamma(N))=\left\{\begin{array}{lc}
	6, & N=2,\\
	\frac{1}{2}\#(\mbox{SL}_2(\mathbb{Z}/N\mathbb{Z}))=\frac{1}{2}N^3\prod_{p|N}(1-p^{-2}), & N\ge 3,
	\end{array}\right.
	\end{equation}
	where $p$ are the prime divisors of $N$.
\end{prop}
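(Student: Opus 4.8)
The plan is to compute the index one level up, inside $\mbox{SL}_2(\mathbb{Z})$, via the reduction-mod-$N$ homomorphism, and only afterwards descend to $\Gamma(1)=\mbox{PSL}_2(\mathbb{Z})$ while carefully tracking the $\pm I$ quotient. Let $\widetilde{\Gamma}(N)=\ker(\pi_N)$ be the kernel of
$$\pi_N:\mbox{SL}_2(\mathbb{Z})\rightarrow\mbox{SL}_2(\mathbb{Z}/N\mathbb{Z}),\qquad \gamma\mapsto\gamma\ (\mbox{mod }N).$$
First I would establish that $\pi_N$ is surjective. This is standard and may be argued either by strong approximation or by an elementary lifting: every matrix in $\mbox{SL}_2(\mathbb{Z}/N\mathbb{Z})$ lifts to an integer matrix of determinant $1$ after correcting entries by multiples of $N$, which reduces to lifting one column to a primitive integer vector and completing it to a unimodular matrix. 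Surjectivity yields $\mbox{SL}_2(\mathbb{Z})/\widetilde{\Gamma}(N)\cong\mbox{SL}_2(\mathbb{Z}/N\mathbb{Z})$, hence $(\mbox{SL}_2(\mathbb{Z}):\widetilde{\Gamma}(N))=\#\mbox{SL}_2(\mathbb{Z}/N\mathbb{Z})$.

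Next I would compute $\#\mbox{SL}_2(\mathbb{Z}/N\mathbb{Z})$. Writing $N=\prod_p p^{e_p}$ and applying the Chinese Remainder Theorem to $\mathbb{Z}/N\mathbb{Z}$, the group factors as $\mbox{SL}_2(\mathbb{Z}/N\mathbb{Z})\cong\prod_p\mbox{SL}_2(\mathbb{Z}/p^{e_p}\mathbb{Z})$. For a prime power one has $\#\mbox{SL}_2(\mathbb{Z}/p^{e}\mathbb{Z})=p^{3e}(1-p^{-2})$: the count $\#\mbox{SL}_2(\mathbb{F}_p)=p(p^2-1)$ follows from $\#\mbox{GL}_2(\mathbb{F}_p)=(p^2-1)(p^2-p)$ divided by $(p-1)$, and the reduction map $\mbox{SL}_2(\mathbb{Z}/p^{e}\mathbb{Z})\rightarrow\mbox{SL}_2(\mathbb{F}_p)$ is onto with kernel of size $p^{3(e-1)}$. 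Taking the product over $p\mid N$ gives $\#\mbox{SL}_2(\mathbb{Z}/N\mathbb{Z})=N^3\prod_{p\mid N}(1-p^{-2})$.

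The final and most delicate step is the passage to $\Gamma(1)=\mbox{SL}_2(\mathbb{Z})/\{\pm I\}$. Set $Z=\{\pm I\}$. Under the correspondence between subgroups of $\mbox{PSL}_2(\mathbb{Z})$ and subgroups of $\mbox{SL}_2(\mathbb{Z})$ containing $Z$, the group $\Gamma(N)$ of Definition \ref{def:principal-congruence-subgroup} is the image of $\widetilde{\Gamma}(N)$, with preimage $\widetilde{\Gamma}(N)\cdot Z$; thus $(\Gamma(1):\Gamma(N))=(\mbox{SL}_2(\mathbb{Z}):\widetilde{\Gamma}(N)Z)$. The factorization
$$(\mbox{SL}_2(\mathbb{Z}):\widetilde{\Gamma}(N))=(\mbox{SL}_2(\mathbb{Z}):\widetilde{\Gamma}(N)Z)\cdot(\widetilde{\Gamma}(N)Z:\widetilde{\Gamma}(N))$$
then reduces everything to deciding whether $-I\in\widetilde{\Gamma}(N)$, i.e. whether $-I\equiv I\pmod N$. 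This is exactly the source of the case split: for $N\ge 3$ we have $-I\notin\widetilde{\Gamma}(N)$, so $Z\cap\widetilde{\Gamma}(N)=\{I\}$, the second factor equals $2$, and the index downstairs is $\tfrac12\#\mbox{SL}_2(\mathbb{Z}/N\mathbb{Z})$; whereas for $N=2$ we have $-I\equiv I\pmod 2$, so $Z\subseteq\widetilde{\Gamma}(2)$, the second factor is $1$, and the index is the full $\#\mbox{SL}_2(\mathbb{F}_2)=2^3(1-2^{-2})=6$. I expect this $\pm I$ bookkeeping — rather than the surjectivity or the elementary counting — to be the main obstacle, precisely because the paper's definition of $\Gamma(N)$ builds the $\pm I$ identification into the congruence condition and the behaviour of $-I$ genuinely differs between $N=2$ and $N\ge 3$.
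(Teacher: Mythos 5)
Your proof is correct and follows essentially the same route as the paper's own argument (given in the source via the reduction homomorphism $\mbox{SL}_2(\mathbb{Z})\rightarrow\mbox{SL}_2(\mathbb{Z}/N\mathbb{Z})$, the Chinese Remainder Theorem with a prime-power count, and the $\pm I$ case split using $-I\equiv I\pmod 2$), with only the cosmetic difference that you count $\mbox{SL}_2$ directly at prime powers while the paper counts $\mbox{GL}_2$ and divides by $\#(\mathbb{Z}/p^{e}\mathbb{Z})^{\times}$. Your explicit attention to the surjectivity of reduction mod $N$ and to the distinction between $\widetilde{\Gamma}(N)$ and $\widetilde{\Gamma}(N)\cdot\{\pm I\}$ is exactly the bookkeeping the paper's definition of $\Gamma(N)$ requires, so there is no gap.
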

\begin{proof}
	See \cite[p. 22, Equation (1.62)]{ShimuraIntroToArith} (where $\tilde{\Gamma}(N)$ is in our notation $\Gamma(N)$).
\end{proof}

Consider the stabilizer of $\infty$ in $\Gamma(1)$ and the stabilizer of $\infty$ in $\Gamma(N)$, we have the following proposition.
\begin{prop}\label{prop:number-cusps-principal-subgroup}
	The number of cusps of $\Gamma(N)$ is $\nu_{\infty}=m/N$, where $N\ge 2$.
\end{prop}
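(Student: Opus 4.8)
The plan is to realize the cusps of $\Gamma(N)$ as the $\Gamma(N)$-orbits on the boundary rationals $\mathbb{Q}\cup\{\infty\}$, and then to count these orbits by exploiting the normality of $\Gamma(N)$ in $\Gamma(1)$. Recall that $\Gamma(1)$ acts transitively on $\mathbb{Q}\cup\{\infty\}$: any reduced fraction $p/q$ is $\gamma(\infty)$ for some $\gamma=\begin{pmatrix} p & \ast \\ q & \ast\end{pmatrix}\in\Gamma(1)$ (completing the first column by B\'{e}zout), and the stabilizer of the cusp $\infty$ is $\Gamma(1)_\infty=\langle T\rangle$ with $T=\begin{pmatrix}1 & 1\\0 & 1\end{pmatrix}$, where $-I$ has already been identified with $I$. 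Hence $\mathbb{Q}\cup\{\infty\}\cong\Gamma(1)/\Gamma(1)_\infty$ as $\Gamma(1)$-sets, and the set of cusps of $\Gamma(N)$ is the double-coset space
\[
\nu_\infty = \#\left(\Gamma(N)\backslash\Gamma(1)/\Gamma(1)_\infty\right).
\]

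Next I would reduce this infinite double-coset count to a finite computation. Since $\Gamma(N)$ is the kernel of reduction modulo $N$, it is normal in $\Gamma(1)$, so the quotient $G=\Gamma(1)/\Gamma(N)$ is a finite group with $|G|=m$ by Proposition \ref{prop:principle-congruence-index}, and $\Gamma(N)\backslash\Gamma(1)$ is canonically this group $G$. Under this identification the right action of $\Gamma(1)_\infty$ descends to right translation by $H:=$ the image of $\Gamma(1)_\infty$ in $G$. Right translation of a group on itself is free, so every orbit has size $|H|$ and the number of orbits is $[G:H]=m/|H|$; that is, $\nu_\infty=m/|H|$.

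It then remains to show $|H|=N$. I have $H\cong \Gamma(1)_\infty/(\Gamma(1)_\infty\cap\Gamma(N))=\Gamma(1)_\infty/\Gamma(N)_\infty$, so I must compute the order of the image of $T$, i.e. the least integer $n\ge 1$ with $T^n\in\Gamma(N)$. Since $T^n=\begin{pmatrix}1 & n\\0 & 1\end{pmatrix}$, the congruence $T^n\equiv \pm I\pmod N$ forces $n\equiv 0\pmod N$: the alternative $T^n\equiv -I$ would require $1\equiv -1\pmod N$, which only occurs when $N\mid 2$, and even then still forces $N\mid n$. Hence $\Gamma(N)_\infty=\langle T^N\rangle$ and $[\Gamma(1)_\infty:\Gamma(N)_\infty]=N$, giving $|H|=N$ and therefore $\nu_\infty=m/N$.

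The bookkeeping is routine; the one step demanding care is the interaction with the quotient by $\pm I$ built into the definitions of $\Gamma(1)$ and $\Gamma(N)$. Concretely, the main obstacle is to verify cleanly that $T^n\in\Gamma(N)$ exactly when $N\mid n$, handling the degenerate level $N=2$ (where $-I\equiv I\pmod 2$) so that the order of $T$ modulo $\Gamma(N)$ is exactly $N$ for every $N\ge 2$; this is precisely what pins down $|H|=N$ and hence the stated formula.
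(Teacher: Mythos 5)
Your proof is correct and is essentially the argument the paper relies on: the printed proof simply cites Shimura, and the proof sketched in the paper's source performs the very same count --- transitivity of $\Gamma(1)$ on $\mathbb{Q}\cup\{\infty\}$, normality of $\Gamma(N)$, and $[\Gamma(1)_\infty:\Gamma(N)_\infty]=N$ --- only packaged as a short exact sequence with an ad hoc group structure on the cusp set, rather than your cleaner identification of the cusps with the double cosets $\Gamma(N)\backslash\Gamma(1)/\Gamma(1)_\infty \cong (\Gamma(1)/\Gamma(N))/H$ and the free right-translation action. Your explicit verification that $T^n\in\Gamma(N)$ forces $N\mid n$ even under the $\pm I$ identification (including the degenerate case $N=2$) is a detail the paper's version leaves to ``direct calculation,'' and it is handled correctly here.
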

\begin{proof}
	See \cite[p. 22-23]{ShimuraIntroToArith}.
\end{proof}

Therefore equation \eqref{eq:genus-Gamma-N-formula-1} becomes
\begin{equation}
g(\Gamma(N))=1+\frac{m}{12N}(N-6),\label{eq:genus-Gamma-N-formula-2}
\end{equation}
equation \eqref{eq:genus-Gamma-N-formula-2} and \eqref{eq:principle-congruence-index-1} imply equation \eqref{eq:genus-formula-Gamma-N}. Therefore the number of cusps of $\Gamma(N)$ is given by the formula:
$$\nu_{\infty}(\Gamma(N))=m/N=\left\{\begin{array}{lc}
3, & N=2,\\
\frac{1}{2}N^2\Pi_{p|N}(1-p^{-2}), & N\ge 3.
\end{array}\right.$$
We list the number of cusps for each $\Gamma(N)$, $N=2,3,4,5$,
\begin{align}
&\nu_{\infty}(\Gamma(2))=\frac{m(\Gamma(2))}{2}=\frac{6}{2}=3, &\nu_{\infty}(\Gamma(3))&=\frac{1}{2}\cdot 9\cdot\frac{8}{9}=4,\notag\\
&\nu_{\infty}(\Gamma(4))=\frac{1}{2}\cdot 16\cdot\frac{3}{4}=6, &\nu_{\infty}(\Gamma(5))&=\frac{1}{2}\cdot 25\cdot\frac{24}{25}=12.\notag
\end{align}
From now on, we will use $n(N)$ to denote the number of cusps of $\Gamma(N)$. More precisely,
\begin{equation}
n(2)=3,\quad n(3)=4,\quad n(4)=6,\quad n(5)=12.
\end{equation}

Let us conclude the following theorem.

\begin{theorem}\label{thm:Gamma-Aut-sec-6}
	Let us define $n(N)$, $N=2,3,4,5,$ as above. If we have a covering space
	$$f:\mathbb{H}\rightarrow\mathbb{CP}^1\backslash\{a_1,a_2,\ldots,a_{_n(N)}\}=\mathbb{H}/\Gamma(N)$$
	for approporiate choices of $\{a_1,a_2,\ldots,a_{_{n(N)}}\}\subset\mathbb{CP}^1$, then $\Aut(f)=\Gamma(N)$.
\end{theorem}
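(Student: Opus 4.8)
The plan is to recognize the quotient map $\pi_N\colon\mathbb{H}\to\mathbb{H}/\Gamma(N)$ as a universal covering whose deck group is tautologically $\Gamma(N)$, and then to identify its target with the $n(N)$-punctured sphere using the genus and cusp computations assembled in this section. The three ingredients are: freeness of the $\Gamma(N)$-action (to get an honest unramified covering), genus zero (to identify the compactified quotient with $\mathbb{CP}^1$), and the cusp count $n(N)$ (to match the number of punctures).

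First I would show that $\Gamma(N)$ acts freely on $\mathbb{H}$ for $N\ge 2$. An element of $\mathrm{SL}_2(\mathbb{R})$ fixing a point in the interior of $\mathbb{H}$ is of \emph{elliptic} type by Proposition \ref{prop:Classification-SL2}, and $\Gamma(N)$ contains no elliptic element when $N>1$; hence only the identity fixes a point of $\mathbb{H}$, so the action is free. Combining this with discreteness, Proposition \ref{prop:discreteneighborhood} provides, for each $\tau\in\mathbb{H}$, a neighborhood $U$ with $\gamma(U)\cap U=\emptyset$ for every $\gamma\in\Gamma(N)\setminus\{I\}$, which is precisely the condition that $\pi_N$ is an unramified covering map with $\Gamma(N)$ acting by deck transformations. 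Since $\mathbb{H}$ is simply connected, $\pi_N$ is a \emph{universal} covering, and by the standard correspondence for quotients by free, properly discontinuous actions its full deck group is exactly $\Gamma(N)$: indeed $\gamma\in\Aut(\pi_N)$ iff $\pi_N\circ\gamma=\pi_N$ iff $\gamma\in\Gamma(N)$.

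Next I would identify the quotient as a punctured sphere. Because $N\ge 2$, Proposition \ref{proposition-5.7} together with the absence of elliptic points shows that $\mathbb{H}/\Gamma(N)$ carries no ramification, and adjoining the finitely many cusps produces a compact Riemann surface $\mathbb{H}^*/\Gamma(N)$, where $\mathbb{H}^*=\mathbb{H}\cup\mathbb{Q}\cup\{\infty\}$. Its genus is $g(\Gamma(N))=0$ for $N=2,3,4,5$ by Theorem \ref{thm:genus-formula-Gamma-N}, and a compact Riemann surface of genus zero is biholomorphic to $\mathbb{CP}^1$; fix such a biholomorphism $\psi\colon\mathbb{H}^*/\Gamma(N)\to\mathbb{CP}^1$. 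By Proposition \ref{prop:number-cusps-principal-subgroup} the number of cusps equals $\nu_\infty=n(N)$, and writing $a_1,\ldots,a_{n(N)}$ for their images under $\psi$, the restriction of $\psi$ maps $\mathbb{H}/\Gamma(N)$ biholomorphically onto $\mathbb{CP}^1\setminus\{a_1,\ldots,a_{n(N)}\}$. This is the ``appropriate choice'' of punctures in the statement.

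Finally, under this identification $\psi$ the map $f=\psi\circ\pi_N\colon\mathbb{H}\to\mathbb{CP}^1\setminus\{a_1,\ldots,a_{n(N)}\}=\mathbb{H}/\Gamma(N)$ is a universal covering, and since $\psi$ is a biholomorphism one has $f\circ\gamma=f$ iff $\pi_N\circ\gamma=\pi_N$ iff $\gamma\in\Gamma(N)$, whence $\Aut(f)=\Gamma(N)$. The main point is not a single hard estimate but the careful assembly of freeness, genus zero, and the cusp count $n(N)$, so that the canonical quotient covering is recognized as the covering of the stated punctured sphere. It is worth noting the one genuine subtlety: a different universal covering of the same base would have deck group only \emph{conjugate} to $\Gamma(N)$ in $\mathrm{PSL}_2(\mathbb{R})$; the equality $\Aut(f)=\Gamma(N)$ holds precisely because the punctures are taken to be the cusp images, so that $f$ coincides with the quotient map $\pi_N$ up to the biholomorphism $\psi$.
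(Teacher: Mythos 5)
Your proof is correct and follows essentially the same route as the paper's: the absence of elliptic elements in $\Gamma(N)$ for $N\ge 2$ makes the action free, Proposition \ref{prop:discreteneighborhood} upgrades this to a covering space action, and the standard correspondence for quotients by such actions (the paper simply cites Hatcher, Proposition 1.40(b)) identifies the deck group of the quotient map with $\Gamma(N)$ itself. Your extra verification via Theorem \ref{thm:genus-formula-Gamma-N} and Proposition \ref{prop:number-cusps-principal-subgroup} that $\mathbb{H}/\Gamma(N)$ really is an $n(N)$-punctured sphere is material the paper folds into the hypothesis (``for appropriate choices of $\{a_1,\ldots,a_{n(N)}\}$''), and your closing remark about conjugacy correctly pinpoints why equality, rather than mere conjugacy, of the deck group holds.
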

\begin{proof}
	The definition of $\Gamma(N)$ implies that $\Gamma(N)$ does not contain elliptic elements. From Propositions \ref{prop:discreteneighborhood} and \ref{proposition-5.7}, for an arbitrary point $x\in\mathbb{H}$, there exists a neighborhood $x\in U$ such that $\gamma_1(U)\cap\gamma_2(U)=\emptyset$ for any $\gamma_1\neq\gamma_2\in\Gamma(N)$. The conclusion directly follows from \cite[p. 72, Proposition 1.40 (b)]{AllenHatcher}.
	%It directly follows from Proposition \ref{proposition-5.7}, Theorem \ref{thm:genus-formula-Gamma-N}, Propositions \ref{prop:principle-congruence-index} and \ref{prop:number-cusps-principal-subgroup}.
\end{proof}

    \section{Modular Forms and Modular Functions}\label{section:modular-forms}
\begin{definition}
	A function $f:\mathbb{H}\rightarrow\mathbb{C}$ is called a modular function of weight $k$ if $f$ satisfies the following properties:
	\begin{enumerate}
		\item The function $f$ is meromorphic on $\mathbb{H}$;
		\item The equation $f\circ\gamma(\tau)=(c\tau+d)^{k}f(\tau)$ holds for any $\tau\in\mathbb{H}$ and any $\gamma=\left(\begin{array}{cc}
		a & b\\
		c & d
		\end{array}\right)\in\Gamma(1)$, where $\Gamma(1)$ is the modular group;\\
		\item The function $f$ is meromorphic at infinity.
	\end{enumerate}
	Furthermore, if $f$ is a modular function of weight $k$ and holomorphic on $\mathbb{H}\cup\{\infty\}$, we call $f$ a modular form of weight $k$.
\end{definition}

As a matter of fact, the space of weight $4$ modular form for $\Gamma(1)$ is one dimensional and generated by the Eisenstein series $E_4(\tau)$. This is a classic result, see \cite[p. 88, Theorem 4 (ii) and p. 93, Examples]{SerreJP} (where $E_2$ is in our notation $E_4$).

\begin{theorem}\label{thm:weight-4-modular-form-E-4}
	The weight $4$ modular form is a dimension one vector space generated by the Eisenstein series $E_4(\tau)$. $E_4(\tau)$ has the following expansion
	\begin{equation}\label{eq:weight-4-modular-form-e-4}
    E_4(\tau)=1+240\sum_{m=1}^{\infty}\sigma_3(m)q^m
	\end{equation}
	in $q=\exp\{2\pi i\tau\}$, $\tau\in\mathbb{H}$, where $\sigma_3(m)=\sum_{d|m}d^3$-the sum of the cubes of all positive divisors of $m$.
\end{theorem}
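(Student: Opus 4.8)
The plan is to establish the two assertions of the theorem in turn: first that the space $M_4(\Gamma(1))$ of weight $4$ modular forms for $\Gamma(1)=\operatorname{PSL}_2(\mathbb{Z})$ is at most one-dimensional, and then that the Eisenstein series $E_4$ is a nonzero element of it carrying the Fourier expansion \eqref{eq:weight-4-modular-form-e-4}; exhibiting this nonzero element simultaneously upgrades the dimension bound to an equality.

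For the dimension bound I would invoke the valence formula (the ``$k/12$ formula''): for any nonzero $f\in M_k(\Gamma(1))$,
$$
v_\infty(f)+\tfrac12 v_i(f)+\tfrac13 v_\rho(f)+\sum_{P}v_P(f)=\frac{k}{12},
$$
where $P$ ranges over the remaining $\Gamma(1)$-inequivalent points of $\mathbb{H}$ and $v_P$ is the order of vanishing. This identity comes from integrating $\tfrac{1}{2\pi i}\,f'/f$ around the boundary of the fundamental domain $D$ of Theorem \ref{fact:fundamental-domain-Gamma-1}, with the standard indentations at the elliptic corners $i,\rho$ and at the cusp $\infty$, the sides identified by $\tau\mapsto\tau+1$ and $\tau\mapsto-1/\tau$ contributing the term $k/12$. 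Specializing to $k=4$ gives right-hand side $1/3$. Since every $v_P$ is a nonnegative integer, the only way the integer part and the half-integer part of the left-hand side can vanish while the total equals $1/3$ is $v_\infty=v_i=0$, $\sum_P v_P=0$, and $v_\rho=1$. Thus any two nonzero elements of $M_4(\Gamma(1))$ have the same divisor, so their ratio is a weight-$0$ holomorphic modular function, which the valence formula with $k=0$ forces to be constant. Hence $\dim M_4(\Gamma(1))\le 1$.

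To produce a nonzero element and its expansion I would take the Eisenstein series
$$
G_4(\tau)=\sum_{(c,d)\neq(0,0)}\frac{1}{(c\tau+d)^4},
$$
which converges absolutely since the weight exceeds $2$ and transforms with weight $4$ under $\Gamma(1)$, and set $E_4=G_4/(2\zeta(4))$. The Fourier expansion comes from the Lipschitz summation formula, obtained by differentiating the cotangent partial-fraction identity
$$
\pi\cot(\pi\tau)=\frac{1}{\tau}+\sum_{n=1}^{\infty}\left(\frac{1}{\tau+n}+\frac{1}{\tau-n}\right)=\pi i-2\pi i\sum_{m=0}^{\infty}q^m,
$$
valid for $\operatorname{Im}\tau>0$ with $q=e^{2\pi i\tau}$, three times to get $\sum_{d\in\mathbb{Z}}(\tau+d)^{-4}=\tfrac{(2\pi i)^4}{3!}\sum_{m\ge1}m^3q^m$. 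Splitting the double sum into the $c=0$ part $2\zeta(4)$ and the $c\neq0$ part, applying this to each fixed $c$ and collecting powers of $q$ by the divisor identity $\sum_{m\mid n}m^3=\sigma_3(n)$, then dividing by $2\zeta(4)=\pi^4/45$ (equivalently using $\zeta(4)=\pi^4/90$ and the Bernoulli value $B_4=-1/30$, which gives $-2k/B_k=240$), collapses the constant term to $1$ and the coefficient of $q^n$ to $240\,\sigma_3(n)$, yielding exactly \eqref{eq:weight-4-modular-form-e-4}. Since its constant term is $1\neq0$, $E_4$ is nonzero, so $\dim M_4(\Gamma(1))=1$ and $E_4$ generates the space.

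The main obstacle is the valence formula itself: its proof demands careful bookkeeping of the contour around the elliptic fixed points $i$ and $\rho$, where the boundary arcs meet at angles $\pi$ and $2\pi/3$ and produce the fractional weights $\tfrac12$ and $\tfrac13$, together with matching the two identified vertical sides and the arc on the unit circle. Once that formula is available the dimension count is immediate, and the expansion reduces to the Lipschitz computation above, the only subtlety there being the conditional convergence of the double sum defining $G_4$, which is handled by the absolute convergence guaranteed in weight $4$.
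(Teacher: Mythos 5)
Your proposal is correct, and it coincides with what the paper implicitly relies on: the paper gives no proof of this theorem, instead citing \cite[p.~88, Theorem 4 (ii) and p.~93, Examples]{SerreJP}, and your argument --- the valence formula $v_\infty(f)+\tfrac12 v_i(f)+\tfrac13 v_\rho(f)+\sum_P v_P(f)=k/12$ forcing $v_\rho=1$ and all other orders zero in weight $4$, hence $\dim M_4(\Gamma(1))\le 1$, combined with the absolutely convergent Eisenstein series $G_4$, the Lipschitz/cotangent computation of its $q$-expansion, and the normalization $2\zeta(4)=\pi^4/45$ giving the coefficient $240\,\sigma_3(n)$ --- is exactly the classical proof in that reference. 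The only step you defer, the contour bookkeeping at $i$, $\rho$, and the cusp in the valence formula, is standard and is carried out in the same source, so nothing is missing.
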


\subsection{Schwarzian Derivative and Modular Function and Modular Form}
For convenience, we introduce the following definition.
\begin{definition}
	We say that a function $f:\mathbb{H}\rightarrow\mathbb{C}$ is an automorphic function for a discrete group $\Gamma'\subseteq\mbox{SL}_2(\mathbb{R})$ if $f$ is meromorphic on $\mathbb{H}\cup\{\infty\}$ and satisfies property
	\begin{equation*}
	f\circ\gamma(\tau)=f(\tau),\qquad\mbox{for all } \tau\in\mathbb{H} \mbox{ and }\gamma\in\Gamma'.
	\end{equation*}
\end{definition}

\begin{remark}
	Modular functions and modular forms are automorphic functions and automorphic forms of weight $zero$ for the modular group $\Gamma(1)$, respectively. Notice that the covering map $f:\mathbb{H}\rightarrow\punctured$ is an automorphic function, it is a generator of the function field over $\mathbb{H}/\Aut(f)$ which has a traditional name \textit{Hauptmodul}. We shall refer the covering map $f:\mathbb{H}\rightarrow \punctured$ as a Hauptmodul for the group $\Aut(f)$ in the future.
\end{remark}

The following lemma is mentioned in \cite[Proposition 3.2]{McKay2000}, for the completion of this article, I will restate and proof it.

\begin{lemma}\label{lemma:auto-func-dimension-one-fraction}
		Let $f$ be a Hauptmodul for a genus zero discrete group $\Aut(f)\subset \mbox{SL}_2(\mathbb{R})$. For every $\gamma$ that normalizes $\Aut(f)$ in $\mbox{SL}_2(\mathbb{R})$, there exists a corresponding matrix $\eta=\left(\begin{array}{cc}
	\eta_1 & \eta_2\\
	\eta_3 & \eta_4
	\end{array}\right)\in\mbox{SL}_2(\mathbb{C})$ such that the following equation
	\begin{equation*}
	f\circ\gamma(\tau)=\eta\circ f(\tau)
	\end{equation*}	
	is true for any $\tau\in\mathbb{H}$.
\end{lemma}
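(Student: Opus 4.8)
The plan is to exploit the defining property of a Hauptmodul on a genus zero group: since $\mathbb{H}/\Aut(f)$ has genus zero, its function field is exactly $\mathbb{C}(f)$, so every function invariant under $\Aut(f)$ that is meromorphic on $\mathbb{H}\cup\{\infty\}$ is a rational function of $f$. My strategy is to show that $g:=f\circ\gamma$ is such an invariant function, conclude $g=R\circ f$ for a rational map $R$, and then argue that $R$ has degree one.

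First I would verify that $g=f\circ\gamma$ is automorphic for $\Aut(f)$. For any $h\in\Aut(f)$, the normalizing hypothesis $\gamma\Aut(f)\gamma^{-1}=\Aut(f)$ gives $\gamma h\gamma^{-1}\in\Aut(f)$, so
\begin{equation*}
g(h\tau)=f(\gamma h\tau)=f\big((\gamma h\gamma^{-1})(\gamma\tau)\big)=f(\gamma\tau)=g(\tau),
\end{equation*}
where the third equality uses that $f$ is $\Aut(f)$-invariant. Since $\gamma\in\mbox{SL}_2(\mathbb{R})$ permutes the cusps and $f$ is meromorphic at infinity, $g$ is also meromorphic on $\mathbb{H}\cup\{\infty\}$. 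Hence $g$ is an automorphic function for the genus zero group $\Aut(f)$, and the Hauptmodul property yields a rational $R$ with $f\circ\gamma=R\circ f$.

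Next I would show $R$ is a M\"obius transformation. Applying the same argument to $\gamma^{-1}$, which also normalizes $\Aut(f)$, produces a rational $S$ with $f\circ\gamma^{-1}=S\circ f$. Composing these two relations and using that the range of $f$ is an infinite subset of $\mathbb{CP}^1$ (two rational functions agreeing on infinitely many points coincide) forces $R\circ S=S\circ R=\mathrm{id}$, so $R$ is invertible as a rational map and therefore has degree one. Writing $R(w)=\frac{\eta_1 w+\eta_2}{\eta_3 w+\eta_4}$ and rescaling the coefficients so that $\eta_1\eta_4-\eta_2\eta_3=1$ gives the desired $\eta\in\mbox{SL}_2(\mathbb{C})$ with $f\circ\gamma=\eta\circ f$.

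The main conceptual point, and the place where the genus zero hypothesis is essential, is the passage from \emph{``$g$ is $\Aut(f)$-invariant''} to \emph{``$g$ is rational in $f$''}: this is precisely the statement that $f$ generates the function field of the genus zero quotient, and the one delicate check is that $g$ extends meromorphically across the cusps, so that it genuinely lives in that function field rather than being merely a meromorphic invariant on the open surface. Once that is in hand, the degree-one claim is a short surjectivity argument rather than a genuine difficulty.
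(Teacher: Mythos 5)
Your proof is correct and follows essentially the same route as the paper: show $f\circ\gamma$ is automorphic for $\Aut(f)$ via the normalizer property, then use that the Hauptmodul $f$ generates the function field of the compactified genus-zero quotient to write $f\circ\gamma=R\circ f$ with $R$ rational. Your explicit degree-one step (running the argument for $\gamma^{-1}$ to get $S$ and composing to force $R\circ S=S\circ R=\mathrm{id}$, hence $R$ M\"obius) is actually more complete than the paper's proof, which passes directly from transcendence degree zero to the assertion that $f\circ\gamma$ and $f$ are related by an automorphism of $\mathbb{CP}^1$ without spelling out why the rational map has degree one.
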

\begin{proof}
	Notice that $\gamma$ normalizes $Aut(f)$ in $\mbox{SL}_2(\mathbb{R})$, i.e., $\gamma\in\mathbb{H}=\mbox{SL}_2(\mathbb{R})$. Therefore the composition $f\circ\gamma(\tau)$ is also an automorphic function for $\Aut(f)$. The compactification of $\punctured\cong\mathbb{H}/\Aut(f)$ is the Riemann sphere $\mathbb{CP}^1$ which has genus zero, which implies that $\mathbb{H}^*/\Aut(f)$ has transcendental degree zero. Thus $f\circ\gamma(\tau)$ and $f(\tau)$ are related by an automorphism of $\mathbb{CP}^1$, i.e., an element $\eta$ in $\mbox{SL}_2(\mathbb{C})$. More precisely, assume $\gamma=\left(\begin{array}{cc}
	a & b\\
	c & d
	\end{array}\right)\in\mbox{SL}_2(\mathbb{R})$ and $\eta=\left(\begin{array}{cc}
	\eta_1 & \eta_2\\
	\eta_3 & \eta_4
	\end{array}\right)\in\mbox{SL}_2(\mathbb{C})$, we have the conclusion
	\begin{equation*}
	f\left(\frac{a\tau+b}{c\tau+d}\right)=\frac{\eta_1 f(\tau)+\eta_2}{\eta_3f(\tau)+\eta_4}
	\end{equation*}
	holds for every $\tau\in\mathbb{H}\cup\{\infty\}$.
\end{proof}

The following theorem plays an important role in determining the automorphic function for $\Gamma(N)$, $N=2,3,4,5$, it is also mentioned in \cite[Proposition 3.1]{McKay2000}, I restate and proof it here for the completion of this article. We will apply Proposition \ref{prop:schwarzian-basic-properties} and Lemma \ref{lemma:auto-func-dimension-one-fraction} to prove the following theorem.

\begin{theorem}\label{thm:schwarzian-f-tau-weight-4}
		Let $f$ be a Hauptmodul for a genus zero discrete group $\Gamma'\subseteq\mbox{SL}_2(\mathbb{R})$, then $\{f,\tau\}$ is a weight $4$ automorphic form for $\Gamma'$.
\end{theorem}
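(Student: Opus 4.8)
The plan is to read off the weight-$4$ transformation law from the chain rule for the Schwarzian derivative together with the automorphy of $f$, and then to check the holomorphy conditions on $\mathbb{H}$ and at the cusps. First I would fix an element $\gamma\in\Gamma'$ with lower row entries $c,d$, and view the composition $f\circ\gamma$ as a function of $\tau$ obtained by post-composing the M\"obius map $\tau\mapsto\gamma\tau$ with $f$. Applying Proposition \ref{prop:schwarzian-basic-properties}(2) to this composition gives $\{f\circ\gamma,\tau\}=\{f,\gamma\tau\}\,(\gamma\tau)_\tau^2+\{\gamma,\tau\}$. Since $\gamma$ is itself a M\"obius transformation, its Schwarzian vanishes, $\{\gamma,\tau\}=0$, which is the identity-function case of Proposition \ref{prop:schwarzian-basic-properties}(1). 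On the other hand, because $f$ is a Hauptmodul for $\Gamma'$ it is automorphic, so $f\circ\gamma=f$ (equivalently, Lemma \ref{lemma:auto-func-dimension-one-fraction} applies with $\eta$ the identity), whence $\{f\circ\gamma,\tau\}=\{f,\tau\}$.

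Combining these two computations yields $\{f,\tau\}=\{f,\gamma\tau\}\,(\gamma\tau)_\tau^2$. Since $\det\gamma=1$, a direct differentiation gives $(\gamma\tau)_\tau=(c\tau+d)^{-2}$, so $(\gamma\tau)_\tau^2=(c\tau+d)^{-4}$ and therefore $\{f,\gamma\tau\}=(c\tau+d)^4\{f,\tau\}$. This is exactly the defining transformation law of a weight-$4$ automorphic form for $\Gamma'$, and it holds for every $\gamma\in\Gamma'$.

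It remains to verify the holomorphy conditions. On $\mathbb{H}$ the Hauptmodul $f$ is a covering map with no ramification points — the genus-zero group $\Gamma'$ has no elliptic points — so $f$ is everywhere locally biholomorphic and $\{f,\tau\}$ is holomorphic on all of $\mathbb{H}$. For the behavior at a cusp I would invoke Propositions \ref{prop:schwarzian-expression-qk-tau-relation} and \ref{prop:schwarzian-f-qk-expansion}: after normalizing the relevant cusp to $0$ (which does not change $\{f,\tau\}$, since it is invariant under post-composition of $f$ with a M\"obius map), we have $\{f,\tau\}=\tfrac{4\pi^2}{k^2}\bigl(1-\qk^2\{f,\qk\}\bigr)$ with $\{f,\qk\}$ holomorphic at $\qk=0$. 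Hence $\qk^2\{f,\qk\}=O(\qk^2)$ and $\{f,\tau\}$ extends holomorphically across the cusp, with limiting value $4\pi^2/k^2$. This establishes holomorphy at infinity and finishes the proof.

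I expect the transformation-law computation to be essentially forced and short once the chain rule and $\{\gamma,\tau\}=0$ are in hand; the only genuinely delicate points are the cusp analysis — confirming that $\{f,\tau\}$ stays bounded rather than developing a pole there — and justifying the absence of ramification on $\mathbb{H}$, both of which lean on the genus-zero, no-elliptic-point hypothesis.
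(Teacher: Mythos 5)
Your proposal is correct and follows essentially the same route as the paper's proof: the chain rule from Proposition \ref{prop:schwarzian-basic-properties} together with $\{\gamma,\tau\}=0$ and the automorphy $f\circ\gamma=f$ yields the weight-$4$ law $\{f,\gamma\tau\}=(c\tau+d)^4\{f,\tau\}$, and holomorphy on $\mathbb{H}$ and at the cusp is checked exactly as in the paper via local biholomorphy of the covering map and the expansion $\{f,\tau\}=\frac{4\pi^2}{k^2}(1-\qk^2\{f,\qk\})$ from Propositions \ref{prop:schwarzian-expression-qk-tau-relation} and \ref{prop:schwarzian-f-qk-expansion}. Your observation that post-composing $f$ with a M\"obius map leaves $\{f,\tau\}$ unchanged (so one may normalize the cusp value to $0$) is a small but welcome clarification that the paper leaves implicit.
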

\begin{proof}
	First we show that $\{f,\tau\}$ is holomorphic on $\mathbb{H}$ and also holomorphic at infinity. From the assumption that $f(\tau)$ is a covering map, $f(\tau)$ is locally biholomorphic at any point $\tau_0\in\mathbb{H}$ with $f'(\tau_0)\neq0$. The definition of Schwarzian derivate
	$$\{f,\tau\}=2\left(\frac{f_{\tau\tau}}{f_{\tau}}\right)_{\tau}-\left(\frac{f_{\tau\tau}}{f_{\tau}}\right)^2$$
	shows the analyticity of $\{f,\tau\}$ at $\tau_0$. Recall equations \eqref{eq:schwarzian-expression-qk-tau-relation} and \eqref{eq:schwarzian-f-qk-expansion-2}, the following equation
	\begin{align}
	\{f,\tau\}&=\frac{4\pi^2}{k^2}(1-\qk^2\{f,\qk\}),\notag\\
	&=\frac{4\pi^2}{k^2}(1-\sum_{m=0}^{\infty}P_m(\B,\C_3,\ldots,\C_{m+3})\qk^{m+2}),\notag
	\end{align}
	implies that $\{f,\tau\}$ is analytic at $\tau=\infty$. Therefore, $\{f,\tau\}$ is holomorphic on $\mathbb{H}\cup\{\infty\}$. To show $\{f,\tau\}$ is a weight $4$ modular form for $\Gamma'$ we only need  to show equation	
	\begin{equation}\label{eq:schwarzian-weight-4-form-1}
	\{f(\gamma(\tau)),\gamma(\tau)\}=(c\tau+d)^4\{f(\tau),\tau\}
	\end{equation}
	holds for every element $\gamma=\left(\begin{array}{cc}
	a & b\\
	c & d
	\end{array}\right)\in\Gamma'$. On one hand, we know that $f\circ\gamma(\tau)$ and $f(\tau)$ are related by a linear transformation from Lemma \ref{lemma:auto-func-dimension-one-fraction}, and Proposition \ref{prop:schwarzian-basic-properties} implies equation
	\begin{align}
	\{f(\tau),\tau\}&=\{f(\tau),\gamma(\tau)\}(\gamma_{\tau})^2+\{\gamma(\tau),\tau\}=\{f(\tau),\gamma(\tau)\}(\gamma_{\tau})^2+\{\tau,\tau\},\notag\\
	&=\{f(\tau),\gamma(\tau)\}\cdot(c\tau+d)^4,\label{eq:schwarzian-weight-4-form-3}
	\end{align}
	where $\{\tau,\tau\}=0$ since $\tau_{\tau\tau}=(1)_{\tau}=0$. Therefore equation \eqref{eq:schwarzian-weight-4-form-1} holds, i.e., $\{f,\tau\}$ is a weight $4$ modular form for $\Gamma'$.
\end{proof}

Next we have the following conclusion as a corollary of Theorem \ref{thm:schwarzian-f-tau-weight-4}. 

\begin{coro}\label{coro:schwarzian-f-tau-weight-4-normalizer}
	ILet $f$ be a Hauptmodul for a genus zero discrete group $\Gamma'\subseteq\mbox{SL}_2(\mathbb{R})$. Then $\{f,\tau\}$ is a weight $4$ automorphic form for the normalizer of $\Gamma'$ in $\mbox{SL}_2(\mathbb{R})$.
\end{coro}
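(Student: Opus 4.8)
The plan is to reduce the statement to the computation already carried out in the proof of Theorem \ref{thm:schwarzian-f-tau-weight-4}, the only genuinely new ingredient being that the M\"obius relation supplied by Lemma \ref{lemma:auto-func-dimension-one-fraction} is available not merely for $\gamma\in\Gamma'$ but for every element of the normalizer $N=\{\gamma\in\mbox{SL}_2(\mathbb{R}):\gamma\Gamma'\gamma^{-1}=\Gamma'\}$. Holomorphy of $\{f,\tau\}$ on $\mathbb{H}\cup\{\infty\}$ was already established there and is an intrinsic property of the function, independent of which group acts; so it remains only to verify the weight-$4$ transformation law
$$\{f(\gamma(\tau)),\gamma(\tau)\}=(c\tau+d)^4\{f(\tau),\tau\}$$
for each $\gamma=\left(\begin{smallmatrix}a&b\\c&d\end{smallmatrix}\right)\in N$.

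First I would observe that $f\circ\gamma$ is again an automorphic function for $\Gamma'$ whenever $\gamma\in N$: for any $g\in\Gamma'$ one has $(f\circ\gamma)\circ g=f\circ(\gamma g\gamma^{-1})\circ\gamma=f\circ\gamma$, since $\gamma g\gamma^{-1}\in\Gamma'$ and $f$ is $\Gamma'$-invariant. Because $f$ is a Hauptmodul for the genus zero group $\Gamma'$, Lemma \ref{lemma:auto-func-dimension-one-fraction} then furnishes a matrix $\eta\in\mbox{SL}_2(\mathbb{C})$ with $f\circ\gamma=\eta\circ f$. Applying property (1) of Proposition \ref{prop:schwarzian-basic-properties}, the invariance of the Schwarzian under post-composition by a M\"obius transformation, gives
$$\{f\circ\gamma,\tau\}=\{\eta\circ f,\tau\}=\{f,\tau\}.$$

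Next I would set $h=f\circ\gamma$ and expand $\{h,\tau\}$ by the chain rule, property (2) of Proposition \ref{prop:schwarzian-basic-properties}: writing $u=\gamma(\tau)$,
$$\{h,\tau\}=\{f(u),u\}\big|_{u=\gamma(\tau)}\,(\gamma_\tau)^2+\{\gamma,\tau\}.$$
Since $\gamma$ is itself a M\"obius transformation, $\{\gamma,\tau\}=0$, and $\gamma_\tau=(c\tau+d)^{-2}$ gives $(\gamma_\tau)^2=(c\tau+d)^{-4}$, so $\{h,\tau\}=(c\tau+d)^{-4}\{f(\gamma(\tau)),\gamma(\tau)\}$. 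On the other hand the previous paragraph gives $\{h,\tau\}=\{f,\tau\}=\{f(\tau),\tau\}$. Equating the two expressions yields $\{f(\gamma(\tau)),\gamma(\tau)\}=(c\tau+d)^4\{f(\tau),\tau\}$, which is exactly the weight-$4$ law under $\gamma\in N$. Together with the holomorphy on $\mathbb{H}\cup\{\infty\}$ inherited from Theorem \ref{thm:schwarzian-f-tau-weight-4}, this shows $\{f,\tau\}$ is a weight-$4$ automorphic form for $N$.

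The only real subtlety, rather than an obstacle, is conceptual: one must recognize that Lemma \ref{lemma:auto-func-dimension-one-fraction} was stated precisely for normalizing $\gamma$, so the argument of Theorem \ref{thm:schwarzian-f-tau-weight-4} transfers verbatim with $\Gamma'$ replaced by $N$; the genus zero hypothesis is what forces $f\circ\gamma$ and $f$ to differ by an element of $\mbox{SL}_2(\mathbb{C})$, and the Schwarzian's blindness to that element is what produces the clean weight-$4$ law. A minor bookkeeping point is holomorphy at the cusps of $N$: since $\Gamma'\subseteq N$ the two groups share the same set of cusps $\mathbb{Q}\cup\{\infty\}$, and the transformation law just established propagates regularity, so holomorphy at $\infty$ (verified through the $\qk$-expansion in Theorem \ref{thm:schwarzian-f-tau-weight-4}) suffices to conclude regularity at every cusp of $N$.
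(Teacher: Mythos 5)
Your proof is correct and follows essentially the same route as the paper's: invoke Lemma \ref{lemma:auto-func-dimension-one-fraction} for a normalizing element to obtain $f\circ\gamma=\eta\circ f$ with $\eta\in\mbox{SL}_2(\mathbb{C})$, eliminate $\eta$ by the M\"obius-invariance of the Schwarzian, and derive the weight-$4$ transformation law from the chain rule together with $\{\gamma,\tau\}=0$. The only difference is that you make explicit details the paper leaves implicit, namely the verification that $f\circ\gamma$ is $\Gamma'$-automorphic (which is contained in the lemma's own proof) and the remark on regularity at the cusps.
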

\begin{proof}
	Assume $\mu\in\mbox{SL}_2(\mathbb{R})$ normalizes $\Gamma'$. By Lemma \ref{lemma:auto-func-dimension-one-fraction}, there exists an matrix $\eta=\left(\begin{array}{cc}
	\eta_1 & \eta_2\\
	\eta_3 & \eta_4
	\end{array}\right)\in\mbox{SL}_2(\mathbb{C})$ such that $f(\tau)=\eta\circ f(\tau)$.
	Therefore we have the following equalities
	\begin{align}
	\{f(\mu(\tau)),\mu(\tau)\}&=\{\frac{\eta_1f(\tau)+\eta_2}{\eta_3f(\tau)+\eta_4},\mu(\tau)\}=\{f(\tau),\mu(\tau)\},\notag\\
	&=(c\tau+d)^4\{f,\tau\},\notag
	\end{align}
	where $\mu=\left(\begin{array}{cc}
	a & b\\
	c & d
	\end{array}\right)\in\Aut(f)$, and the last equation comes from direct calculation. This shows that $\{f,\tau\}$ is a weight $4$ auromorphic form for the normalizer of $\Gamma'$ in $\mbox{SL}_2(\mathbb{R})$.
\end{proof}

\begin{lemma}\label{lemma:gamma-N-gamma-1-normolizer}
	$\Gamma(1)$ normalizes $\Gamma(N)$ in $\mbox{SL}_2(\mathbb{R})$. Consequently, $\Gamma(N)$ is normal in $\Gamma(1)$.
\end{lemma}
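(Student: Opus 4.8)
The plan is to exploit the description of $\Gamma(N)$ as (the projectivization of) the kernel of reduction modulo $N$, using the fact that kernels of group homomorphisms are automatically normal. I would first work at the level of $\mbox{SL}_2(\mathbb{Z})$ rather than $\Gamma(1)$: write $\widetilde{\Gamma}(N)$ for the set of $g\in\mbox{SL}_2(\mathbb{Z})$ with $g\equiv\pm I\pmod N$, so that $\Gamma(N)=\widetilde{\Gamma}(N)/\pm I$ and $\Gamma(1)=\mbox{SL}_2(\mathbb{Z})/\pm I$ in the notation of Definition \ref{def:principal-congruence-subgroup}. The heart of the argument is one short computation: given $\gamma\in\mbox{SL}_2(\mathbb{Z})$ and $g\in\widetilde{\Gamma}(N)$ with $g\equiv I\pmod N$, write $g=I+NM$ for an integer matrix $M$; then
\begin{equation*}
\gamma g\gamma^{-1}=\gamma(I+NM)\gamma^{-1}=I+N\left(\gamma M\gamma^{-1}\right).
\end{equation*}
Since $\det\gamma=1$, both $\gamma$ and $\gamma^{-1}$ have integer entries, so $\gamma M\gamma^{-1}$ is integral and $\gamma g\gamma^{-1}\equiv I\pmod N$, giving $\gamma g\gamma^{-1}\in\widetilde{\Gamma}(N)$. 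The case $g\equiv -I\pmod N$ is handled identically by writing $g=-I+NM$. This establishes $\gamma\widetilde{\Gamma}(N)\gamma^{-1}\subseteq\widetilde{\Gamma}(N)$ for every $\gamma\in\mbox{SL}_2(\mathbb{Z})$.

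Next I would descend to the quotient by $\pm I$. Because $\pm I$ is central in $\mbox{SL}_2(\mathbb{Z})$, conjugation is well defined on $\Gamma(1)$, and since $\{\pm I\}\subseteq\widetilde{\Gamma}(N)$ the inclusion above projects to $[\gamma]\,\Gamma(N)\,[\gamma]^{-1}\subseteq\Gamma(N)$ for every $[\gamma]\in\Gamma(1)$. Reading $\Gamma(1)$ inside $\mbox{SL}_2(\mathbb{R})$ in the usual way, this says exactly that every element of $\Gamma(1)$ belongs to the normalizer of $\Gamma(N)$ in $\mbox{SL}_2(\mathbb{R})$, i.e. $\Gamma(1)$ normalizes $\Gamma(N)$. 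For the ``consequently'' clause, I would simply note that $\Gamma(N)\subseteq\Gamma(1)$, so conjugation by $\Gamma(1)$ both preserves $\Gamma(N)$ (just shown) and, by taking $\gamma^{-1}$ in place of $\gamma$, yields the reverse inclusion; hence $\gamma\Gamma(N)\gamma^{-1}=\Gamma(N)$ for all $\gamma\in\Gamma(1)$, which is normality of $\Gamma(N)$ in $\Gamma(1)$.

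There is no genuinely hard step; the only care needed is bookkeeping. The first subtlety is the $\pm I$ ambiguity in the definitions of $\Gamma(1)$ and $\Gamma(N)$, which is why I would prove conjugation invariance for the honest matrix groups first and only then projectivize. The second is to read ``normalizes in $\mbox{SL}_2(\mathbb{R})$'' correctly: the claim is that $\Gamma(1)$ is \emph{contained in} the $\mbox{SL}_2(\mathbb{R})$-normalizer of $\Gamma(N)$, not that it equals the full normalizer. With these conventions fixed, the single computation above, resting only on $\det\gamma=1$, does all the work.
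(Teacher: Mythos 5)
Your proof is correct: the conjugation computation $\gamma(\pm I+NM)\gamma^{-1}=\pm I+N(\gamma M\gamma^{-1})$, together with the careful descent through the quotient by $\pm I$, is exactly the standard argument, and your handling of the reverse inclusion via $\gamma^{-1}$ properly upgrades $\gamma\Gamma(N)\gamma^{-1}\subseteq\Gamma(N)$ to equality. The paper itself dismisses this lemma with ``The proof is elementary,'' so your write-up simply supplies in full the elementary argument the paper omits; there is no divergence of method to report.
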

\begin{proof}
	The proof is elementary.
\end{proof}
    \section{Main Result and Examples}

\subsection{Main Result}
In section \ref{section:qk-expansion-metric-formula}, we mentioned the inversion series \eqref{eq:qk-f-inversion}. Now let us consider the situation that one of the singularities is $a_1=0$, and the corresponding parabolic generator fixes infinity. It is equivalent to say that a covering map $f:\mathbb{H}\rightarrow\punctured$ has expansion 
\begin{equation*}
	f=A\qk+B\qk^2+c_3\qk^3+\sum_{m=4}^{\infty}c_m\qk^m
\end{equation*}
in $\qk=\exp\{\frac{2\pi i}{k}\tau\}$, $\tau\in\mathbb{H}$, for some real constant $k$ with $A\neq 0$. Let us denote $\f=\frac{f}{A}$, $f\in\punctured$, for convenience, recall equation \eqref{eq:coefficient-depend-on-A-B} in Theorem \ref{thm:coefficient-depend-on-A-B},
\begin{equation*}
	\f=f/A=\qk+\B\qk^2+\C_3(A,\B)\qk^3+\sum_{m=4}^{\infty}\C_m(A,\B)\qk^m,
\end{equation*}
where $\B=\frac{B}{A}$, $\C_m=\frac{c_m}{A}$ for $m\ge 3$. It is not hard to see that $\qk$ has the following expansion
\begin{equation}\label{eq:qk-f-inversion-poly-expression}
	\qk(\f)=\f+\tB(\B)\f^2+\tc_3(\B,\C_3)\f^3+\sum_{m=4}^{\infty}\tc_m(\B,\C_3,\ldots,\C_m)\f^m
\end{equation}
in $\f$, where $\tB(\B)=-\B$ and $\tc_m(\B,\C_3,\ldots,\C_m)$ are polynomials in $\B,\C_3,\ldots,\C_m$ which has degree $1$ in $\C_m$ with constant coefficient. Let us restate the main result, Theorem \ref{thm:into-main-1}, and proof it here.

\begin{theorem}\label{thm:final-metric-formula-expansion-1}
	Let $f:\mathbb{H}\rightarrow\mathbb{CP}^1\setminus\{a_1=0,a_2,\ldots,a_n\}$ be a covering map with expression
	\begin{equation}
	f=f(\tau)=A\qk+B\qk^2+c_3\qk^3+\sum_{m=4}^{\infty}c_m\qk^m.\notag
	\end{equation}
	Then the complete K\"{a}hler-Einstein metric has asymptotic expansion
	\begin{align}
	|ds|=\frac{1}{|A||\f|\log|\f|}\left|1-\left(\B\f-\frac{\Re(\B\f)}{\log|\f|}\right)+\sum_{m=2}^{\infty}R_m(A,\B,\f,\frac{\f^s\overline{\f^{m-s}}}{\log^j|\f|})\right||df|\label{eq:ds-final-expansion-in-auto-coefficients}
	\end{align}
	at the cusp $0$, where $\f=\frac{f}{A}$ for $f\in\mathbb{CP}^1\setminus\{a_1=0,a_2,\ldots,a_n\}$, $\B=\frac{B}{A}$, and $R_m(A,\B,\f,\frac{\f^s\overline{\f^{m-s}}}{\log^j|\f|})$ is a polynomial in $A,\B,\f,\frac{\f^s\overline{\f^{m-s}}}{\log^j|\f|}$, $s,j=0,1,\ldots,m$, with constant coefficients for $m\ge 2$.
\end{theorem}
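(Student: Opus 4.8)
The plan is to start from the intrinsic metric formula proved in Theorem \ref{thm:metric-expression},
$$ds^2=\frac{|\qk'(f)|^2}{|\qk(f)|^2\log^2|\qk(f)|}|df|^2,$$
take square roots to obtain $|ds|=\dfrac{|\qk'(f)|}{|\qk(f)|\,|\log|\qk(f)||}|df|$, and then reduce the whole statement to expanding the three scalar factors $|\qk'(f)|$, $|\qk(f)|$ and $|\log|\qk(f)||$ near the cusp by means of the explicit inversion series \eqref{eq:qk-f-inversion-poly-expression}. All quantities will be written through $\f=f/A$, so that $\qk'(f)=\frac1A\frac{d}{d\f}\qk(\f)$.

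First I would record the basic factorization. Write $\qk(\f)=\f\,g(\f)$ with $g(\f)=1-\B\f+\sum_{m\ge 2}\tc_{m+1}\f^{m}$ coming from \eqref{eq:qk-f-inversion-poly-expression}, where $g(0)=1$. Then $\frac{d}{d\f}\qk(\f)=g+\f g'$, so $\frac{|\qk'(f)|}{|\qk(f)|}=\frac{1}{|A||\f|}\,|h(\f)|$, where $h(\f)=\frac{g+\f g'}{g}=1+\f(\log g)'$ is holomorphic with $h=1-\B\f+\cdots$; and $\log|\qk(f)|=\log|\f|+\log|g|$, hence $|\log|\qk(f)||=|\log|\f||\cdot\bigl(1+\frac{\log|g|}{\log|\f|}\bigr)$, the last factor being real and positive for $\f$ small since $\log|g|\to 0$ while $\log|\f|\to-\infty$. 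Absorbing this positive real factor into the modulus gives
$$|ds|=\frac{1}{|A|\,|\f|\,|\log|\f||}\left|\frac{h(\f)}{1+\log|g(\f)|/\log|\f|}\right||df|,$$
which already exhibits the stated leading factor (the sign ambiguity of $\log|\f|$ is harmless, the modulus supplying $|\log|\f||$).

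The heart of the argument is the final double expansion of the bracketed quotient. I would expand $\frac{1}{1+x}=\sum_{j\ge 0}(-x)^{j}$ with $x=\log|g|/\log|\f|$, write $\log|g|=\Re\log g$ with $\log g$ holomorphic and $\log g=-\B\f+\cdots$, and multiply by the holomorphic series $h$. The constant term is $1$, and the degree-one contribution is $-\B\f+\frac{\Re(\B\f)}{\log|\f|}=-\bigl(\B\f-\frac{\Re(\B\f)}{\log|\f|}\bigr)$, exactly the leading correction in the statement. Collecting all terms of total degree $m$ in $\f,\overline{\f}$ defines $R_m$: every such term is a product of a power of $\f$ with a monomial $\frac{\f^{s}\overline{\f^{m-s}}}{\log^{j}|\f|}$. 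Since $\log|g|$ vanishes to first order, $(\log|g|)^{j}$ has total degree $\ge j$, which forces $j\le m$ and yields the finite index range $s,j=0,\dots,m$. Finally, the coefficients produced are polynomials in $\B,\C_3,\dots,\C_m$ (the data of $g$ and $h$), and Theorem \ref{thm:coefficient-depend-on-A-B} lets me replace each $\C_m$ by a polynomial in $A,\B$; hence $R_m$ is a polynomial in $A,\B,\f,\frac{\f^{s}\overline{\f^{m-s}}}{\log^{j}|\f|}$ with constant coefficients, as claimed.

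I expect the main obstacle to be bookkeeping rather than anything conceptual: one must run two expansions simultaneously — a convergent holomorphic Taylor series in $\f$ and an asymptotic series in $1/\log|\f|$ — keep track of the antiholomorphic dependence entering through $\log|g|=\Re\log g$, justify pulling the positive real logarithmic factor inside $|\cdot|$, and verify both the degree bound $j\le m$ and the promised polynomial dependence of the coefficients on $A$ and $\B$. Convergence for small $\f$ is automatic, because $g(0)=1\ne 0$ makes $h$ and $\log g$ analytic at $0$ and makes $x=\log|g|/\log|\f|\to 0$, so the geometric series in $x$ is valid.
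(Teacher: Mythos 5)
Your proposal is correct and follows essentially the same route as the paper's proof: both start from the metric formula of Theorem \ref{thm:metric-expression}, substitute $\f=f/A$, expand the logarithmic derivative $\qk_{\f}/\qk$ and the factor $1/\log|\qk(\f)|$ via the inversion series \eqref{eq:qk-f-inversion-poly-expression} (your $h(\f)=1+\f(\log g)'$ and $\log|g|=\Re\log g$ are exactly the paper's computations $(\log|q(\f)|)_{\f}$ and $\tfrac12\log|1+\tB\f+\cdots|^2$ in compressed notation), and conclude by invoking Theorem \ref{thm:coefficient-depend-on-A-B} to make the coefficients of $R_m$ polynomial in $A,\B$. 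Your added care about the sign of $\log|\f|$ and the positivity of $1+\log|g|/\log|\f|$ is a small tidying of a point the paper passes over silently, not a different argument.
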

\begin{proof}
	For convenience, we will simply write $q=\qk$ in this proof. Recall the proof of Theorem \ref{thm:metric-expression}, we apply the substitution $\f=\frac{f}{A}$,
	\begin{align}
	ds^2&=\frac{-4}{\left(\frac{k}{2\pi i}\log q(\f)-\overline{\frac{k}{2\pi i}\log q(\f)}\right)^2}\left|d\left(\frac{k}{2\pi i}\right)\log q(\f)\right|^2,\notag\\
	&=\frac{|q_{\f}(\f)|^2}{|q(\f)|^2\log^2|q(\f)|}\left|\frac{d\f}{df}\right||df|^2,\notag\\
	&=\frac{|q_{\f}(\f)|^2}{A^2|q(\f)|^2\log^2|q(\f)|}|df|^2.\label{eq:metric-expasion-in-bf(f)}
	\end{align}
	We calculate the expansion of each terms in equation \eqref{eq:metric-expasion-in-bf(f)} by \eqref{eq:qk-f-inversion-poly-expression},
	\begin{align}
	\frac{q_{_{\f}}(\f)}{q(\f)}&=(\log|q(\f)|)_{\f}=(\log|\f|+\log|1+\tB\f+\sum_{m=2}^{\infty}\tc_{m+1}\f^m|)_{_{\f}},\notag\\
	                        &=\frac{1}{\f}+\tB+\sum_{m=1}^{\infty}\tilde{Q}^{(11)}_m(\tB,\tc_3,\ldots,\tc_{m+2})\f^m,\label{eq:q'-over-q-expansion}
	\end{align}
    where $\tilde{Q}^{(11)}_m(\tB,\tc_3,\ldots,\tc_{m+2})$ is a polynomial in $\tB,\tc_3,\ldots,\tc_{m+2}$ which has degree $1$ in $\tc_{m+2}$ with constant coefficients for $m\ge 1$. Next we calculate the expansion of $\frac{1}{\log|q(\f)|}$ in $\f$ and $\log|\f|$,
	\begin{align}
	\frac{1}{\log|q(\f)|}
	&=\frac{1}{\log|\f|}\left(1+\frac{\log|1+\tB\f+\sum_{m=2}^{\infty}\tc_{m+1}\f^m|}{\log|\f|}\right)^{-1},\notag\\
	&=\frac{1}{\log|\f|}\left[1-\frac{\log|1+\tB\f+\sum_{m=2}^{\infty}\tc_{m+1}\f^m|}{\log|\f|}+\sum_{l=2}^{\infty}(-1)^l\left(\frac{\log|1+\tB\f+\sum_{m=2}^{\infty}\tc_{m+1}\f^m|}{\log|\f|}\right)^l\right].\label{eq:1/log|q|-expansion-part-1}
	\end{align}
	Let us use the expansion $\log(1+x)=x-\frac{x^2}{2}+\frac{x^3}{3}+\sum_{n\ge 4}(-1)^{n+1}\frac{x^n}{n}$ and write $\tc_1=1$ and $\tc_2=\tB$ for convenience,
	\begin{align}
	\log|1+\tB\f+\sum_{m=2}^{\infty}\tc_{m+1}\f^m|
	&=\frac{1}{2}\log|1+\tB\f+\tc_3\f^2+\sum_{m=2}^{\infty}\tc_{m+1}\f^m|^2,\notag\\
%	&=\frac{1}{2}\log\left[(1+\tB\f+\tc_3\f^2+\sum_{m=2}^{\infty}\tc_{m+1}\f^m)(1+\overline{\tB\f}+\sum_{m=2}^{\infty}\overline{\tc_{m+1}\f^m})\right],\notag\\
	&=\frac{1}{2}\log\left[1+2\Re(\tB\f)+\sum_{m=2}^{\infty}\left(\sum_{s+j=m,s,j\ge 0}\tc_{s+1}\overline{\tc_{j+1}}\f^s\overline{\f^j}\right)\right],\notag\\
	%&=\frac{1}{2}\log\left[1+2\tB\Re\f+(2\tc_3\Re(\f^2)+\tB^2|\f|^2)+\mathsf{O}(\f^3)\right],\label{eq:1/log|q|-expansion-part-2}
	&=\frac{1}{2}\left\{\left[2\Re(\tB\f)+\sum_{m=2}^{\infty}\left(\sum_{s+j=m,s,j\ge 0}\tc_{s+1}\overline{\tc_{j+1}}\f^s\overline{\f^j}\right)\right]\right.\notag\\
	&\qquad\qquad\left.+\sum_{l=2}^{\infty}(-1)^{l+1}\frac{1}{l}\left[2\Re(\tB\f)+\sum_{m=2}^{\infty}\left(\sum_{s+j=m,s,j\ge 0}\tc_{s+1}\overline{\tc_{j+1}}\f^s\overline{\f^j}\right)\right]^l\right\},\notag\\
	&=\frac{1}{2}\left[2\Re(\tB\f)+\sum_{m=2}^{\infty}R^{(1)}_m(\f^m,\f^{m-1}\overline{\f},\ldots,\f\overline{\f^{m-1}},\overline{\f^m})\right],\notag
	\end{align}
	where $R^{(1)}_m(\f^m,\f^{m-1}\overline{\f},\ldots,\f\overline{\f^{m-1}},\overline{\f^m})$ is a polynomial in $\f^m,\f^{m-1}\overline{\f},\ldots,\f\overline{\f^{m-1}},\overline{\f^m}$ with coefficients in $\tB,\tc_3,\ldots,\tc_{m+1}$. Therefore equation \eqref{eq:1/log|q|-expansion-part-1} has expansion
	\begin{align}
	\frac{1}{\log|q(\f)|}
	&=\frac{1}{\log|\f|}\left(1-\frac{1}{2}\frac{1}{\log|\f|}\left[2\Re(\tB\f)+\sum_{m=2}^{\infty}R^{(1)}_m(\f^m,\f^{m-1}\overline{\f},\ldots,\f\overline{\f^{m-1}},\overline{\f^m})\right]\right.\notag\\
	%&\qquad\qquad\left.+\left(\frac{1}{2}\right)^2\frac{1}{\log^2|\f|}\left[2\Re(\tB\f)+\sum_{m=2}^{\infty}\left(\sum_{s+j=m,s,j\ge 0}\tc_{s+1}\overline{\tc_{j+1}}\f^s\overline{\f^j}\right)\right]^2\right).\notag\\
	&\qquad\qquad \left.+\sum_{l=2}^{\infty}\frac{(-1)^l}{2^l\log^l|\f|}\left[2\Re(\tB\f)+\sum_{m=2}^{\infty}R^{(1)}_m(\f^m,\f^{m-1}\overline{\f},\ldots,\f\overline{\f^{m-1}},\overline{\f^m})\right]^l\right),\notag\\
	&=\frac{1}{\log|\f|}\left[1-\frac{\Re(\tB\f)}{\log|\f|}+\sum_{m=2}^{\infty}R^{(2)}_m(\frac{\f^s\overline{\f^{m-s}}}{\log^j|\f|})\right],\label{eq:1/log|q|-expansion-part-final}
	\end{align}
	where $R^{(2)}_m(\frac{\f^s\overline{\f^{m-s}}}{\log^j|\f|})$ is a polynomial in $\frac{\f^s\overline{\f^{m-s}}}{\log^j|\f|}$, $s=0,1,\ldots,m$ and $j=1,\ldots,m$, which has coefficients in $\tB,\tc_3,\ldots,\tc_{m+1}$ for $m\ge 2$. Therefore equation \eqref{eq:q'-over-q-expansion} and \eqref{eq:1/log|q|-expansion-part-final} implies that the metric $|ds|$ defined by equation \eqref{eq:metric-expasion-in-bf(f)} has expression
	\begin{align}
	|ds|&=\frac{1}{|A||\f|\log|\f|}\left|1+\tB\f+\sum_{m=1}^{\infty}\tilde{Q}^{(11)}(\tB,\tc_3,\ldots,\tc_{m+2})\f^{m+1}\right|\cdot\left[1-\frac{\Re(\tB\f)}{\log|\f|}+\sum_{m=2}^{\infty}R^{(2)}_m(\frac{\f^s\overline{\f^{m-s}}}{\log^j|\f|})\right],\notag\\
	&=\frac{1}{|A||\f|\log|\f|}\left|1+\left(\tB\f-\frac{\Re(\tB\f)}{\log|\f|}\right)+\sum_{m=2}^{\infty}\tilde{R}_m(\f,\frac{\f^s\overline{\f^{m-s}}}{\log^j|\f|})\right||df|,
	\end{align}
	where $\tilde{R}_m(\f,\frac{\f^s\overline{\f^{m-s}}}{\log^j|\f|})$ is a polynomial in the variable set $\frac{\f^s\overline{\f^{m-s}}}{\log^j|\f|}$, $s,j=0,1,\ldots,m$, with coefficients in $\tB,\tc_2,\ldots,\tc_{m+1}$ for $m\ge 2$. Due to equation \eqref{eq:qk-f-inversion-poly-expression} and Theorem \ref{thm:coefficient-depend-on-A-B}, each term $\tB,\tc_2,\ldots,\tc_{m+1}$ can be solved as a polynomial in $A,\B$, so the coefficients in $\tilde{R}_m(\f,\frac{\f^s\overline{\f^{m-s}}}{\log^j|\f|})$ are polynomials in $A,\B$. Let us write $\tilde{R}_m(\f,\frac{\f^s\overline{\f^{m-s}}}{\log^j|\f|})$ as $R_m(A,\B,\f,\frac{\f^s\overline{\f^{m-s}}}{\log^j|\f|})$, which denotes a polynomial in $A,\B,\f,\frac{\f^s\overline{\f^{m-s}}}{\log^j|\f|}$ with constant coefficients. Recall that $\tB=-\B$, we have conclusion that the metric $|ds|$ is given by the following expression
	\begin{equation}
       |ds|=\frac{1}{|A||\f|\log|\f|}\left|1-\left(\B\f-\frac{\Re(\B\f)}{\log|\f|}\right)+\sum_{m=2}^{\infty}R_m(A,\B,\f,\frac{\f^s\overline{\f^{m-s}}}{\log^j|\f|})\right||df|,
	\end{equation}
	and, consequently, $|ds|$ is uniquely determined up to a choice of $A,\B$.
\end{proof}

Recall Theorem \ref{thm:Gamma-Aut-sec-6}, now let us focus on the case that the deck transformation group is $\Gamma(N)$, $N=2,3,4,5$.
\begin{theorem}\label{thm:2nd-main-thm}
	Let $n(N)$ be the numbers that is defined by equation \eqref{eq:cusp-number-Gamma-N}, and let $f_N:\mathbb{H}\rightarrow\Npunctured$ be a universal covering with deck transformation group $\Aut(f_N)=\Gamma(N)$, $N=2,3,4,5$, satisfying that $f_N$ vanishes at infinity, i.e., $f_N(\infty)=0$. Then $f_N$ can be given by the following expansion 
\begin{equation}\label{eq:2nd-main-theorem-equation}
	f_N(\tau)=A\q+B\q^2+\sum_{m=3}^{\infty}A\cdot\C_{m}(\B)\q^m
	\end{equation}
	in $\q=\exp\{\frac{2\pi}{N}i\tau\},\tau\in\mathbb{H}$, where the constants $A,B\in\mathbb{C}$ are uniquely determined by the set of values of the punctured points $\{a_1=0,a_2,\ldots,a_{n(N)}\}$, and the term $\C_m(\B)$ in the coefficient is a polynomial in $\B$ with constant coefficients for $m\ge 3$.
\end{theorem}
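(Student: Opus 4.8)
The plan is to exploit the two features that distinguish $\Gamma(N)$, $N=2,3,4,5$, from a general deck group: these groups are genus zero (Theorem~\ref{thm:genus-formula-Gamma-N}) and, decisively, normal in the full modular group $\Gamma(1)$ (Lemma~\ref{lemma:gamma-N-gamma-1-normolizer}). First I would record the setup. Since $f_N$ is a Hauptmodul for $\Gamma(N)$ with $f_N(\infty)=0$, and the stabilizer of the cusp $\infty$ in $\Gamma(N)$ is generated by $\tau\mapsto\tau+N$, that cusp has width $N$; hence Proposition~\ref{prop:coveringmap-global-expression} (with $k=N$ and $c_0=0$) yields $f_N=A\q+B\q^2+\sum_{m\ge3}c_m\q^m$ in $\q=\exp\{\tfrac{2\pi}{N}i\tau\}$ with $A\neq0$, where $\B=B/A$ and $\C_m=c_m/A$. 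By Theorem~\ref{thm:coefficient-depend-on-A-B} every $\C_m$ is already a polynomial in $A$ and $\B$; the entire content of the present theorem is that, for these special groups, the dependence on $A$ disappears.

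The heart of the argument is to identify $\{f_N,\tau\}$ explicitly. By Theorem~\ref{thm:schwarzian-f-tau-weight-4}, $\{f_N,\tau\}$ is a weight-$4$ automorphic form for $\Gamma(N)$; since $\Gamma(1)$ normalizes $\Gamma(N)$ (Lemma~\ref{lemma:gamma-N-gamma-1-normolizer}), Corollary~\ref{coro:schwarzian-f-tau-weight-4-normalizer} upgrades this to a weight-$4$ automorphic form for $\Gamma(1)$, and being holomorphic on $\mathbb{H}\cup\{\infty\}$ it is a genuine weight-$4$ modular form for $\Gamma(1)$. By Theorem~\ref{thm:weight-4-modular-form-E-4} the space of such forms is one-dimensional, so
\begin{equation*}
\{f_N,\tau\}=c\,E_4(\tau)\qquad\text{for some constant }c\in\mathbb{C}.
\end{equation*}
Comparing constant terms as $\tau\to i\infty$ via Proposition~\ref{prop:schwarzian-expression-qk-tau-relation} (with $k=N$) gives $\{f_N,\tau\}\to\tfrac{4\pi^2}{N^2}$ while $E_4\to1$, so $c=\tfrac{4\pi^2}{N^2}$. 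This is the decisive point: the right-hand side is now a universal modular form whose Fourier coefficients depend neither on $A$ nor on the punctures $a_2,\ldots,a_{n(N)}$, in sharp contrast with the Schwarzian identity \eqref{eq:right-hand-side-of-relation} that drives Theorem~\ref{thm:coefficient-depend-on-A-B}.

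It then remains to read off a recursion for the $\C_m$. Writing $q=\q^N$ so that $E_4=1+240\sum_{k\ge1}\sigma_3(k)\q^{Nk}$, and expanding the left-hand side through Propositions~\ref{prop:schwarzian-expression-qk-tau-relation} and~\ref{prop:schwarzian-f-qk-expansion} as $\tfrac{4\pi^2}{N^2}\bigl(1-\sum_{m\ge0}P_m(\B,\C_3,\ldots,\C_{m+3})\q^{m+2}\bigr)$, I would cancel the common factor $\tfrac{4\pi^2}{N^2}$ and match coefficients of $\q^\ell$ for each $\ell\ge2$, obtaining
\begin{equation*}
P_{\ell-2}(\B,\C_3,\ldots,\C_{\ell+1})=\begin{cases}-240\,\sigma_3(\ell/N), & N\mid\ell,\\[2pt] 0, & N\nmid\ell.\end{cases}
\end{equation*}
For $\ell=2$ this reads $12(\C_3-\B^2)=-240\,\sigma_3(1)$ when $N=2$ and $=0$ when $N\ge3$, so $\C_3=\B^2-20$ for $N=2$ and $\C_3=\B^2$ for $N\ge3$; in either case $\C_3$ is a polynomial in $\B$ with constant, puncture-independent coefficients. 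Because Proposition~\ref{prop:schwarzian-f-qk-expansion} guarantees that $P_m$ has degree one in $\C_{m+3}$ with a nonzero constant leading coefficient, each subsequent equation solves uniquely for $\C_{\ell+1}$ in terms of $\B,\C_3,\ldots,\C_\ell$ and a constant drawn from $E_4$; an induction then shows every $\C_m$ is a polynomial in $\B$ alone with constant coefficients, which is precisely the asserted form \eqref{eq:2nd-main-theorem-equation}.

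Finally, the uniqueness of $A$ and $B$ follows because $f_N$ is the $\Gamma(N)$-Hauptmodul pinned down by the normalization $f_N(\infty)=0$ together with the prescribed cusp-to-puncture correspondence, so that its first two Taylor coefficients are determined by $\{a_1=0,a_2,\ldots,a_{n(N)}\}$. I expect the main obstacle to be the careful bookkeeping of the $\q$-versus-$\q^N$ matching in the recursion and the verification that the $E_4$-reduction genuinely removes all $A$-dependence; the conceptual crux, however, is the passage—through the normality of $\Gamma(N)$ in $\Gamma(1)$—from a mere $\Gamma(N)$-form to the one-dimensional space spanned by $E_4$, since it is this collapse that replaces the puncture- and $A$-dependent data of \eqref{eq:right-hand-side-of-relation} by universal constants.
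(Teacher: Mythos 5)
Your proposal is correct and follows essentially the same route as the paper's own proof: you identify $\{f_N,\tau\}$ as a weight-$4$ modular form for $\Gamma(1)$ via the normality of $\Gamma(N)$ (Lemma~\ref{lemma:gamma-N-gamma-1-normolizer} with Corollary~\ref{coro:schwarzian-f-tau-weight-4-normalizer}), pin it to $\frac{4\pi^2}{N^2}E_4$ by matching constant terms, and then solve the resulting coefficient recursion inductively for the $\C_m$ as polynomials in $\B$, exactly as the paper does. (Incidentally, your sign $P_{\ell-2}=-240\,\sigma_3(\ell/N)$ is the correct one, consistent with the paper's worked examples for $\Gamma(2)$ and $\Gamma(5)$, whereas the displayed recursion in the paper's proof drops the minus sign.)
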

\begin{proof}
	By Proposition \ref{prop:schwarzian-expression-qk-tau-relation}, the following identity
	\begin{equation}
		\{f_N,\tau\}=\frac{4\pi^2}{N^2}(1-\q^2\{f_N,\q\})\notag
	\end{equation}
	holds. Corollary \ref{coro:schwarzian-f-tau-weight-4-normalizer} and Lemma \ref{lemma:gamma-N-gamma-1-normolizer} imply that $\{f_N,\tau\}$ is a weight 4 modular form for $\Gamma(1)$. Recall Theorem \ref{thm:weight-4-modular-form-E-4}, there is a constant $\kappa$ such that the following equation
	\begin{equation}
	E_4(\tau)=\kappa\{f_N,\tau\}=\kappa\frac{4\pi^2}{N^2}(1-\q^2\{f_N,\q\})
	\end{equation}
	holds. Equations \eqref{eq:weight-4-modular-form-e-4} and \eqref{eq:schwarzian-f-qk-expansion-2} imply the following identity
	\begin{equation}\label{eq:main-theorem-schwarzian-qN-expansion}
	%\begin{split}
	1+240\sum_{m=1}^{\infty}\sigma_3(m)q^m=\frac{4\pi^2}{N^2}\kappa[1-\sum_{m=0}^{\infty}P_m(\B,\C_3,\ldots,\C_{m+3})\q^{m+2}].
	%\end{split}
	\end{equation}
	Matching the constant terms in equation \eqref{eq:main-theorem-schwarzian-qN-expansion},
	\begin{equation*}
		1=\frac{4\pi^2}{N^2}\kappa\quad\mbox{implies}\quad\kappa=\frac{N^2}{4\pi^2}.
	\end{equation*}
	Therefore we have the following identity
	\begin{equation*}
		1+240\sum_{m=1}^{\infty}\sigma_3(m)q^m=1-P_0(\B,\C_3)\q^2-\sum_{m=1}^{\infty}P_m(\B,\C_3,\ldots,\C_{m+3})\q^{m+2}
	\end{equation*}
	holds. Notice the identity $\q^N=q$ by their definitions, we match the coefficient of $\q^{lN}$ in equation \eqref{eq:main-theorem-schwarzian-qN-expansion} with the coefficient of $q^l$ in equation \eqref{eq:weight-4-modular-form-e-4}, and let other coefficients in equation \eqref{eq:main-theorem-schwarzian-qN-expansion} be $0$. We get the following set of equations,
	\begin{equation}
		P_m(\B,\C_3,\ldots,\C_{m+3})=\left\{
			\begin{array}{ll}
			240\sigma_3(l), & \mbox{if }m=lN-2\mbox{ for }l=1,2,\ldots,\\
			0,              & \mbox{otherwise},
			\end{array}\right.
	\end{equation}
	where $m\ge 0$. Therefore $\C_3$ can be solved in terms of $\B$ when $m=0$, and notice that every coefficient $P_m(\B,\C_3,\ldots,\C_{m+3})$ is a polynomial of degree $1$ in $\C_{m+3}$ with constant coefficient. By induction, we can conclude that $\C_m$ can be solved as a polynomial in $\B$. Equation \eqref{eq:2nd-main-theorem-equation} holds since $c_m=A\cdot\C_m$, $m\ge 3$.
\end{proof}

\begin{coro}\label{coro:gamma-N-metric-expansion}
Under the same assumption in Theorem \ref{thm:2nd-main-thm}, the complete K\"{a}hler-Einstein metric has asymptotic expansion
\begin{equation}
|ds|=\frac{1}{|A||\f|\log|\f|}\left|1-\left(\B\f-\frac{\Re(\B\f)}{\log|\f|}\right)+\sum_{m=2}^{\infty}R_m(\B,\f,\frac{\f^s\overline{\f^{m-s}}}{\log^j|\f|})\right||df|
\end{equation}
at the cusp $0$, where $R_m(\B,\f,\frac{\f^s\overline{\f^{m-s}}}{\log^j|\f|})$ is a polynomial in $\B,\f,\frac{\f^s\overline{\f^{m-s}}}{\log^j|\f|}$, $s,j=0,1,\ldots,m$, with constant coefficients for $m\ge 2$.
\end{coro}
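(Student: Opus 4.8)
The plan is to obtain this corollary by specializing the already-proved Theorem~\ref{thm:final-metric-formula-expansion-1} to the congruence situation $\Aut(f_N)=\Gamma(N)$ of Theorem~\ref{thm:2nd-main-thm}. Theorem~\ref{thm:final-metric-formula-expansion-1} already delivers the exact shape of the expansion,
\begin{equation*}
|ds|=\frac{1}{|A||\f|\log|\f|}\left|1-\left(\B\f-\frac{\Re(\B\f)}{\log|\f|}\right)+\sum_{m=2}^{\infty}R_m\left(A,\B,\f,\frac{\f^s\overline{\f^{m-s}}}{\log^j|\f|}\right)\right||df|,
\end{equation*}
with each $R_m$ a polynomial in $A,\B,\f,\frac{\f^s\overline{\f^{m-s}}}{\log^j|\f|}$. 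Hence the only thing left to verify is that, under the stronger hypotheses here, the variable $A$ drops out of every $R_m$, leaving a polynomial in $\B,\f,\frac{\f^s\overline{\f^{m-s}}}{\log^j|\f|}$ alone.

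To do so I would retrace where the $A$-dependence enters the proof of Theorem~\ref{thm:final-metric-formula-expansion-1}. There the $R_m$ are assembled, via the expansions \eqref{eq:q'-over-q-expansion} and \eqref{eq:1/log|q|-expansion-part-final}, out of the inversion-series coefficients $\tB,\tc_3,\ldots,\tc_{m+1}$ of \eqref{eq:qk-f-inversion-poly-expression}; these are polynomials in $\B,\C_3,\ldots,\C_m$. The generic coefficients supplied by Theorem~\ref{thm:coefficient-depend-on-A-B} are $\C_m=\C_m(A,\B)$, which is exactly why $A$ appears. Under $\Aut(f_N)=\Gamma(N)$, however, Theorem~\ref{thm:2nd-main-thm} upgrades this to $\C_m=\C_m(\B)$, a polynomial in $\B$ alone for every $m\ge 3$. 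Substituting these $A$-free $\C_m$ back makes each $\tc_m$ a polynomial in $\B$ alone, and hence every $R_m$ a polynomial in $\B,\f,\frac{\f^s\overline{\f^{m-s}}}{\log^j|\f|}$ with constant coefficients, which is the assertion.

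The argument is therefore almost purely formal: it is a substitution of the sharper coefficient structure of Theorem~\ref{thm:2nd-main-thm} into the metric formula already established. The one place that deserves care---what I would regard as the only genuine obstacle---is checking that no factor of $A$ re-enters through the denominators. The explicit prefactor $\tfrac{1}{|A||\f|\log|\f|}$ isolates the single surviving occurrence of $A$, and one must confirm that the expansion of $1/\log|q(\f)|$, built from $\log|q(\f)|=\log|\f|+\log|1+\tB\f+\cdots|$, feeds in only through the now $A$-free coefficients $\tc_m(\B)$. Once this is verified term by term, the stated expansion follows at once.
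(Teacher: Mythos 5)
Your proposal is correct and matches the paper's argument: the paper proves Corollary~\ref{coro:gamma-N-metric-expansion} simply by noting it follows directly from Theorems~\ref{thm:final-metric-formula-expansion-1} and~\ref{thm:2nd-main-thm}, which is precisely your specialization. Your extra step of tracing how the $A$-dependence exits the $R_m$ --- each $\tc_m$ being a polynomial in $\B,\C_3,\ldots,\C_m$ and Theorem~\ref{thm:2nd-main-thm} upgrading $\C_m(A,\B)$ to $\C_m(\B)$, leaving $A$ only in the prefactor $\frac{1}{|A|}$ --- is a correct and welcome elaboration of what the paper leaves implicit.
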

\begin{proof}
It directly follows from Theorems \ref{thm:final-metric-formula-expansion-1} and \ref{thm:2nd-main-thm}.
\end{proof}
    \subsection{Examples}\label{section:example}
We will see some examples by applying the main theorems.
\begin{example}[The covering space $\mathbb{CP}^1\setminus\{a_1=0,a_2,\ldots,a_{12}\}\cong\mathbb{H}/\Gamma(5)$]
	In this case, $N=5$, $q_{_5}^5=q$, so the coefficients of $q_{_5}^2,q_{_5}^3,q_{_5}^4$ are all $0$. We have the following,
	$$
	\left\{\begin{array}{l}
	12(\C_3-\B^2)=0,\\
	48(\C_4-2\C_3\B+\B^3)=0,\\
	24(5\C_5-10\C_4\B+17\C_3\B^2-6\C_3^2-6\B^4)=0,\\
	\ldots,
	\end{array}\right.\quad\mbox{implies}\quad
	\left\{\begin{array}{c}
	\C_3=\B^2,\\
	\C_4=\B^3,\\
	\C_5=\B^4,\\
	\ldots.
	\end{array}\right.
	$$
	Therefore the covering map $f_5:\mathbb{H}\rightarrow\mathbb{CP}^1\setminus\{a_1=0,a_2,\ldots,a_{12}\}$ with deck transformation group $\Gamma(5)$ can be given by the following expansion
	\begin{equation}
	\f_5=\frac{f_5(\tau)}{A}=q_{_5}+\B q^2_{_5}+\B^2q^3_{_5}+\B^3q^4_{_5}+\B^4q^5_{_5}+\sum_{m=6}^{\infty}P_m(\B)q^m_{_5},
	\end{equation}
	where the value of $\B$ is up to the collection of punctures $\{a_1=0,a_2,\ldots,a_{12}\}$. Therefore the complete K\"{a}hler-Einstein metric at the cusp $a_1=0$ is given by the following equation
	\begin{align*}
	|ds|=\frac{1}{|A||\f|\log|\f|}\left|1-\left(\B\f-\frac{\Re(\B\f)}{\log|\f|}\right)+\sum_{m=2}^{\infty}R_m(\B,\f,\frac{\f^s\overline{\f^{m-s}}}{\log^j|\f|})\right||df|,
	%&\qquad-\left(-7\B^2\Re(\f^2)-\B^2\frac{\f\Re\f}{\log|\f|}+\frac{7}{2}\B^2\frac{\Re(\f^2)}{\log|\f|}+\B^2\frac{(\Re\f)^2}{\log^2|\f|}\right)\\
	\end{align*}
	where $\f=\f_5=\frac{f_5}{A}$ for convenience,  $f_5\in\mathbb{CP}^1\setminus\{a_1=0,a_2,\ldots,a_{12}\}$.
\end{example}

%\subsection{The Triple Punctured Riemann Sphere}
The following example is the case of the triple punctured Riemann sphere, which was mentioned in Example \ref{example:intro-thm-2}.
\begin{example}[The covering space of $\mathbb{H}/\Gamma(2)\cong\mathbb{CP}^1\setminus\{a_1=0,a_2,a_3\}$]\label{example:3-points-arbitrary}
	In this example, $N=2$, $q^2_{_2}=q$, we have the following,
	\begin{equation*}
	\left\{\begin{array}{l}
	12(\C_3-\B^2)=-240,\\
	48(\C_4-2\C_3\B+\B^3)=0,\\
	24(5\C_5-10\C_4\B+17\C_3\B^2-6\C_3^2-6\B^4)=-2160,\\
	\ldots,
	\end{array}\right.\quad\mbox{implies}\quad\left\{\begin{array}{l}
	\C_3=\B^2-20,\\
	\C_4=\B^3-40\B,\\
	\C_5=\B^4-60\B^2+462,\\
	\ldots.
	\end{array}\right.
	\end{equation*} 
	Consequently, the covering map $f_2:\mathbb{H}\rightarrow\mathbb{CP}^1\setminus\{a_1=0,a_2,a_3\}$ with deck transformation group $\Gamma(2)$ has expression
	\begin{equation}\label{eq:f-2-general-q-2-expansion}
	\f_2=\frac{f_2(\tau)}{A}=q_{_2}+\B q^2_{_2}+(\B^2-20)q^3_{_2}+\sum_{m=4}^{\infty}P_m(\B)q^m_{_2}.
	\end{equation}
	The metric equation \eqref{eq:ds-final-expansion-in-auto-coefficients} have the following expression
	\begin{align*}
	|ds|=\frac{1}{|A||\f|\log|\f|}\left|1-\left(\B\f-\frac{\Re(\B\f)}{\log|\f|}\right)+\sum_{m=2}^{\infty}R_m(\B,\f,\frac{\f^s\overline{\f^{m-s}}}{\log^j|\f|})\right||df|,
		%&\qquad-\left[(7\B^2-40)\f^2-\B^2\frac{\f\Re\f}{\log|\f|}+(\frac{7}{2}\B^2-20)\frac{\Re(\f^2)}{\log|\f|}+\B^2\frac{(\Re\f)^2}{\log^2|\f|}\right]\notag\\
	\end{align*}
	where $\f=\f_2=\frac{f_2}{A}$, $f_2\in\mathbb{CP}^1\setminus\{0,a_2,a_3\}$, for convenience, and $\B=\frac{B}{A}$ is the only free parameter which is uniquely determined by the two punctures $a_2$ and $a_3$. 
\end{example}

Next example is a special case of the triple punctured Riemann sphere, which was mentioned as Example \ref{example:intro} in section \ref{sec:introduction}. The explicit metric formula is also given by S. Agard from a different approach in \cite{AgardDist}.

\begin{example}[The quotient space $\mathbb{H}/\Gamma(2)\cong\mathbb{CP}^1\setminus\{0,1,\infty\}$]\label{example:metric}
	When $A=16, B=-128$, equation \eqref{eq:f-2-general-q-2-expansion} is the covering map given by the modular lambda function
	\begin{equation}
	f_2=f_2(\tau)=\lambda(\tau)=16\qsec-128\qsec^2+704\qsec^3-3072\qsec^4+O(\qsec^5),
	\end{equation}
	where $f_2=\lambda(\tau)$ is the covering map of $\mathbb{H}\rightarrow\mathbb{CP}^1\setminus\{0,\infty,1\}$ with values at the cusps as below
	$$\infty\mapsto 0,\qquad0\mapsto 1,\qquad1\mapsto\infty.$$
	In this case, $\B=\frac{-128}{16}=-8$, $\C_3=\frac{704}{16}=44=(-8)^2-20$, the metric is given by the following
	\begin{align*}
	|ds|&=\frac{1}{|16||\f|\log|\f|}\left|1+8\left(\f-\frac{\Re\f}{\log|\f|}\right)\right.\\
	&\qquad\left.-\left[(2\cdot 44+5\cdot 64)\f^2-64\frac{\f\Re\f}{\log|\f|}+(44+\frac{5}{2}\cdot 64)\frac{\Re(\f^2)}{\log|\f|}+64\frac{(\Re\f)^2}{\log^2|\f|}\right]+O(\f^3)\right||df|,\\
	&=\frac{1}{|f|\log|f/16|}\left|1+\frac{1}{2}\left(f-\frac{\Re f}{\log|f/16|}\right)\right.\\
	&\qquad\left.-\left[\frac{51}{32}f^2-\frac{1}{4}\frac{f\Re f}{\log|f/16|}+\frac{51}{64}\frac{\Re(f^2)}{\log|f/16|}+\frac{1}{4}\frac{(\Re f)^2}{\log^2|f/16|}\right]+O(f^3)\right||df|,
	\end{align*}
	where $f=f_2$ and $\f=\f_2=\frac{f_2}{16}$, $f=f_2\in\mathbb{CP}^1\setminus\{0,1,\infty\}$.
%	and expansion
%	\begin{align*}
% |ds|^2&=\frac{1}{|16|^2|\f|^2\log^2|\f|}\left[1+16\left(\Re\f-\frac{\Re\f}{\log|\f|}\right)-((4\cdot 44+11\cdot 64)\Re(\f^2)\right.\\
%	&\qquad\left.+2\cdot 64(\Re\f)^2)-\left((2\cdot 44+5\cdot 64)\frac{\Re(\f^2)}{\log|\f|}-4\cdot 64\frac{(\Re\f)^2}{\log|\f|}+3\cdot 64\frac{(\Re\f)^2}{\log^2|\f|}\right)+O(\f^3)\right],\\
%	&=\frac{1}{|f|^2\log^2|f/16|}\left[1+\left(\Re f-\frac{\Re f}{\log|f/16|}\right)-\left(\frac{165}{16}\Re(f^2)\right.\right.\\
%	&\qquad\left.\left.+\frac{1}{2}(\Re f)^2+\frac{51}{32}\frac{\Re(f^2)}{\log|f/16|}-\frac{(\Re f)^2}{\log|f/16|}+\frac{3}{4}\frac{(\Re f)^2}{\log^2|f/16|}\right)+\mathsf{O}(f^3)\right].
%	\end{align*}
\end{example}

\begin{example}[The punctured Riemann sphere $\mathbb{CP}^1\setminus\{a_1,a_2,a_3\}$ for arbitrary $a_1,a_2,a_3$]
	Assume $a_1, a_2$ and $a_3$ are three different points on $\mathbb{CP}^1$, the M\"{o}bius transformation 
	\begin{equation}
	\lambda\mapsto\frac{a_1(a_2-a_3)-a_3(a_2-a_1)\lambda}{(a_2-a_3)-(a_2-a_1)\lambda}=\tilde{\lambda}\notag 
	\end{equation}
	maps $\{0,1,\infty\}$ to $\{a_2,a_3,a_1\}$ respectively. Direct calculation indicates the following conclusion.
	\begin{coro}\label{coro:section-8-1}
		A covering map $f:\mathbb{H}\rightarrow\mathbb{CP}^1\setminus\{a_1,a_2,a_3\}$ for any three different points $a_1,a_2,a_3$ can be uniquely determined by the following equation
		\begin{equation}
		f(\tau)=\frac{a_1(a_2-a_3)-a_3(a_2-a_1)\lambda(\tau)}{(a_2-a_3)-(a_2-a_1)\lambda(\tau)}.
		\end{equation}
		Furthermore, $f(\tau)=\tilde{\lambda}(\tau)$ can be given by the following expansion,
		\begin{equation}
		f(\tau)=a_1+16\frac{(a_1-a_3)(a_2-a_1)}{a_2-a_3}\qsec+128(a_1-a_3)\left[2\frac{(a_2-a_1)^2}{(a_2-a_3)^2}-\frac{a_2-a_1}{a_2-a_3}\right]\qsec^2+O(\qsec^3).\notag
		\end{equation}
	\end{coro}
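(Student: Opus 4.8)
The plan is to reduce Corollary \ref{coro:section-8-1} to the already-established $\{0,1,\infty\}$ case (Example \ref{example:metric}) by composing the known covering map $\lambda(\tau)$ with the explicit M\"obius transformation that carries $\{0,1,\infty\}$ to $\{a_2,a_3,a_1\}$. The first observation is that if $\lambda:\mathbb{H}\rightarrow\mathbb{CP}^1\setminus\{0,1,\infty\}$ is a covering map and $T$ is any M\"obius transformation, then $T\circ\lambda:\mathbb{H}\rightarrow\mathbb{CP}^1\setminus T(\{0,1,\infty\})$ is again a covering map with the \emph{same} deck transformation group, since $T$ is a biholomorphism of $\mathbb{CP}^1$. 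Taking $T(\lambda)=\frac{a_1(a_2-a_3)-a_3(a_2-a_1)\lambda}{(a_2-a_3)-(a_2-a_1)\lambda}$, one checks directly by substituting $\lambda=0,1,\infty$ that $T(0)=a_2$, $T(1)=a_3$, $T(\infty)=a_1$, so $T(\{0,1,\infty\})=\{a_1,a_2,a_3\}$ and $f=T\circ\lambda$ is a covering map onto $\mathbb{CP}^1\setminus\{a_1,a_2,a_3\}$. Uniqueness of the covering map up to the deck group gives the first displayed equation.

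The second step is to produce the $q_{_2}$-expansion. Here I would combine the recorded expansion $\lambda(\tau)=16\qsec-128\qsec^2+704\qsec^3+O(\qsec^4)$ from Example \ref{example:metric} with the requirement that $f$ vanishes (in the shifted sense) at the cusp $\infty$: since $\lambda(\infty)=0$, we have $f(\infty)=T(0)=a_1$, so the expansion of $f$ begins with the constant term $a_1$. To get the higher coefficients I would expand $T(\lambda)$ as a power series in $\lambda$ about $\lambda=0$, namely $T(\lambda)=a_1+c_1\lambda+c_2\lambda^2+\cdots$ where $c_1=T'(0)$ and $c_2=\tfrac12 T''(0)$, and then substitute the $\qsec$-series for $\lambda$. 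Computing $T'(0)$ gives the coefficient $\frac{(a_1-a_3)(a_2-a_1)}{a_2-a_3}$ of $16\qsec$, and collecting the $\qsec^2$ terms — which receive contributions both from $c_1\cdot(-128\qsec^2)$ and from $c_2\cdot(16\qsec)^2$ — yields the stated $\qsec^2$ coefficient $128(a_1-a_3)\left[2\frac{(a_2-a_1)^2}{(a_2-a_3)^2}-\frac{a_2-a_1}{a_2-a_3}\right]$.

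The bulk of the work, and the only place where an actual computation is unavoidable, is the differentiation of the M\"obius map $T$ and the bookkeeping of the composition $T\circ\lambda$ up to second order in $\qsec$. This is routine but error-prone: one must differentiate $T(\lambda)=\frac{a_1(a_2-a_3)-a_3(a_2-a_1)\lambda}{(a_2-a_3)-(a_2-a_1)\lambda}$ with respect to $\lambda$, evaluate at $\lambda=0$, and then correctly weight the $16\qsec$ and $-128\qsec^2$ terms of $\lambda$ through the chain rule. I expect the main obstacle to be purely algebraic, namely keeping the cross-terms $T'(0)\cdot(\text{coeff of }\qsec^2)$ and $\tfrac12 T''(0)\cdot(\text{coeff of }\qsec)^2$ straight and simplifying the resulting rational expression in $a_1,a_2,a_3$ into the factored form displayed; there is no conceptual difficulty once the reduction to the $\lambda$-function case is in place.
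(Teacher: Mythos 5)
Your proposal follows essentially the same route as the paper: the paper also obtains the corollary by composing the modular lambda function with this M\"obius transformation $T$ and then performing the ``direct calculation,'' and your chain-rule bookkeeping is the right one. Writing $T(\lambda)=a_1+c_1\lambda+c_2\lambda^2+\cdots$, one finds $c_1=T'(0)=\frac{(a_1-a_3)(a_2-a_1)}{a_2-a_3}$ and $c_2=\tfrac12 T''(0)=\frac{(a_1-a_3)(a_2-a_1)^2}{(a_2-a_3)^2}$, and then $-128\,c_1+256\,c_2=128(a_1-a_3)\left[2\frac{(a_2-a_1)^2}{(a_2-a_3)^2}-\frac{a_2-a_1}{a_2-a_3}\right]$, which is exactly the displayed $\qsec^2$ coefficient; the covering-map part (a biholomorphism of $\mathbb{CP}^1$ post-composed with a covering map is again a covering map with the same deck group) is the implicit content of the paper's one-line justification.

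One slip should be fixed: in your first paragraph you assert $T(0)=a_2$, $T(1)=a_3$, $T(\infty)=a_1$, but direct substitution gives $T(0)=\frac{a_1(a_2-a_3)}{a_2-a_3}=a_1$, $T(1)=\frac{a_2(a_1-a_3)}{a_1-a_3}=a_2$, and $T(\infty)=a_3$. This contradicts your own (correct) use of $f(\infty)=T(0)=a_1$ in the second paragraph. Curiously, the paper's preamble to this corollary contains the same mislabeling, saying $T$ sends $\{0,1,\infty\}$ to $\{a_2,a_3,a_1\}$ ``respectively.'' What the proof actually needs is only the set equality $T(\{0,1,\infty\})=\{a_1,a_2,a_3\}$ together with $T(0)=a_1$ (so that the expansion at the cusp $\infty$, where $\lambda$ vanishes, begins with $a_1$), and your computation uses both correctly; so the argument is sound once the pointwise values are corrected.
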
	
	Therefore the metric expansion at $a_1$ can be given by the following corollary.
	\begin{coro}\label{coro:section-8-2}
		The complete K\"{a}hler-Einstein
		metric on $\mathbb{CP}^1\setminus\{a_1,a_2,a_3\}$ at cusp $a_1$ has the following asymptotic expansion
		\begin{equation}
		|ds|=\frac{|a_2-a_3|}{16|a_1-a_3||a_2-a_1|}\frac{1}{\mathfrak{f}\log|\mathfrak{f}|}\left\{1-8\left[(2\frac{a_2-a_1}{a_2-a_3}-1)\mathfrak{f}-\frac{\Re((2\frac{a_2-a_1}{a_2-a_3}-1)\mathfrak{f})}{\log|\mathfrak{f}|}\right]+O(\mathfrak{f}^2)\right\}|df|\notag
		\end{equation}
		in $\mathfrak{f}=\frac{(a_2-a_3)}{16(a_1-a_3)(a_2-a_1)}(f-a_1)$, $f\in\mathbb{CP}^1\setminus\{a_1,a_2,a_3\}$.
	\end{coro}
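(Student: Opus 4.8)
The plan is to reduce Corollary~\ref{coro:section-8-2} to the master metric formula of Theorem~\ref{thm:final-metric-formula-expansion-1}, which already handles any puncture sitting at the origin whose attached parabolic generator fixes $\infty$. The only real work is to move the distinguished cusp $a_1$ to $0$ and then to read off the two free parameters $A$ and $\B=\frac{B}{A}$ from the explicit covering map. First I would set $g=f-a_1$, so that the biholomorphism $z\mapsto z-a_1$ identifies $\mathbb{CP}^1\setminus\{a_1,a_2,a_3\}$ with $\mathbb{CP}^1\setminus\{0,a_2-a_1,a_3-a_1\}$ and, by uniqueness of the complete K\"ahler--Einstein metric, carries the metric of one to that of the other. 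Since $|dg|=|df|$ and since a translation changes neither $\Aut(f)=\Gamma(2)$ nor the normalization that the generator of the relevant cusp fixes $\infty$, the hypotheses of Theorem~\ref{thm:final-metric-formula-expansion-1} hold for $g$, whose $\qsec$-expansion is exactly the one of Corollary~\ref{coro:section-8-1} with the constant term $a_1$ removed.

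Next I would extract the two constants from Corollary~\ref{coro:section-8-1}, namely $A=16\frac{(a_1-a_3)(a_2-a_1)}{a_2-a_3}$ and $B=128(a_1-a_3)\big[2\frac{(a_2-a_1)^2}{(a_2-a_3)^2}-\frac{a_2-a_1}{a_2-a_3}\big]$. Then $\frac{1}{|A|}=\frac{|a_2-a_3|}{16|a_1-a_3||a_2-a_1|}$ reproduces the prefactor in the statement, while the substitution $\mathfrak{f}=\frac{a_2-a_3}{16(a_1-a_3)(a_2-a_1)}(f-a_1)$ is precisely $\mathfrak{f}=g/A=\f$ in the notation of the main theorem. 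A short simplification of the quotient $\B=B/A$, in which the factors $(a_1-a_3)$ cancel and $\frac{a_2-a_1}{a_2-a_3}$ factors out, collapses it to $\B=8\big(2\frac{a_2-a_1}{a_2-a_3}-1\big)$.

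Finally I would substitute $\f=\mathfrak{f}$, $\B=8\big(2\frac{a_2-a_1}{a_2-a_3}-1\big)$ and the value of $\frac{1}{|A|}$ into the formula of Theorem~\ref{thm:final-metric-formula-expansion-1}, keeping only the $m=0$ and $m=1$ contributions and folding the tail $\sum_{m\ge 2}R_m$ into $O(\mathfrak{f}^2)$. Because the scalar $8$ is real it passes through $\Re(\cdot)$, so $-\big(\B\f-\frac{\Re(\B\f)}{\log|\f|}\big)$ becomes $-8\big[(2\frac{a_2-a_1}{a_2-a_3}-1)\mathfrak{f}-\frac{\Re((2\frac{a_2-a_1}{a_2-a_3}-1)\mathfrak{f})}{\log|\mathfrak{f}|}\big]$, matching the claimed expansion term by term.

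I expect the one genuine point to verify, rather than a real obstacle, to be the legitimacy of the translation reduction: one must confirm that conjugating the covering map by $z\mapsto z-a_1$ preserves the deck transformation group $\Gamma(2)$ and the normalization that the generator attached to the distinguished cusp fixes $\infty$, so that Theorem~\ref{thm:final-metric-formula-expansion-1} applies verbatim to $g$. Everything else is the bounded algebraic simplification of $\B$ and $\frac{1}{A}$, which is routine but must be carried out carefully to land the constants $8$ and $\frac{|a_2-a_3|}{16|a_1-a_3||a_2-a_1|}$ exactly.
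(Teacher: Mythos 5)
Your proposal is correct and matches the paper's own proof in essence: the paper likewise reads off $A=16\frac{(a_1-a_3)(a_2-a_1)}{a_2-a_3}$ and $B$ from Corollary~\ref{coro:section-8-1}, simplifies $\B=\frac{B}{A}=8\bigl(2\frac{a_2-a_1}{a_2-a_3}-1\bigr)$, and substitutes into the general metric expansion (the paper cites Corollary~\ref{coro:gamma-N-metric-expansion}, which is just Theorem~\ref{thm:final-metric-formula-expansion-1} combined with Theorem~\ref{thm:2nd-main-thm}, so this is the same route). The translation step you flag as needing verification is indeed harmless and is left implicit in the paper: post-composing the covering map with $z\mapsto z-a_1$ does not change the deck group $\Gamma(2)$ or the cusp normalization, since $g(\gamma\tau)=f(\gamma\tau)-a_1=g(\tau)$ for every $\gamma\in\Aut(f)$.
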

	\begin{proof}
		Direct calculation gives the value of coefficients $A, B$,
		$$A=16\frac{(a_1-a_3)(a_2-a_1)}{a_2-a_3},\,B=128(a_1-a_3)\left[2\frac{(a_2-a_1)^2}{(a_2-a_3)^2}-\frac{a_2-a_1}{a_2-a_3}\right].$$
		It implies the following value
		$$\B=\frac{B}{A}=8\left(2\frac{a_2-a_1}{a_2-a_3}-1\right).$$
		Then the result follows directly from Corollary \ref{coro:gamma-N-metric-expansion}.
	\end{proof}
\end{example}

  \section*{Acknowledgement}
   Special thanks to my advisor Professor Damin Wu, who has provided me incrediblly great instruction and support. The author would like to thank Professor Shing-Tung Yau, Professor Lizhen Ji and Professor Vladimir Markovic for helpful comments. Last but not least, the auother would like to thank the anonymous referees for their careful reviews and suggestions.

%\bibliographystyle{apalike}
%\bibliography{JQ19}
\begin{bibdiv}
	\begin{biblist}
		
		\bib{AgardDist}{article}{
		author={Agard, Stephen},
		title={Distortion theorems for quasiconformal mappings},
		date={1968},
		journal={Ann. Acad. Sci. Fenn. Ser. A I No. },
		number={413},
		pages={12 pages},
		review={\MR{0222288}},
	}
		
		\bib{AhlforsComplex}{book}{
			author={Ahlfors, Lars~V.},
			title={Complex analysis},
			edition={Third},
			publisher={McGraw-Hill Book Co., New York},
			date={1978},
			ISBN={0-07-000657-1},
			%note={An introduction to the theory of analytic functions of one
				%complex variable, International Series in Pure and Applied Mathematics},
			review={\MR{510197}},
		}
		
		\bib{AhlforsTeichmuller}{inproceedings}{
			author={Ahlfors, Lars~V.},
			title={Quasiconformal mappings, {T}eichm\"{u}ller spaces, and {K}leinian
				groups},
			date={1980},
			booktitle={Proceedings of the {I}nternational {C}ongress of {M}athematicians
				({H}elsinki, 1978)},
			publisher={Acad. Sci. Fennica, Helsinki},
			pages={71\ndash 84},
			review={\MR{562598}},
		}
		
		\bib{BersUniModuliKlein}{article}{
			author={Bers, L.},
			title={Uniformization. {M}oduli and {K}leinian groups},
			date={1973},
			ISSN={0042-1316},
			journal={Uspehi Mat. Nauk},
			volume={28},
			number={4(172)},
			pages={153\ndash 198},
			note={Translated from the English (Bull. London Math. Soc. {{\bf{4}}}
				(1972), 257--300) by A. Ju. Geronimus},
			review={\MR{0385085}},
		}
		
		\bib{BersTechmuller}{article}{
			author={Bers, Lipman},
			title={On boundaries of teichm\"{u}ller spaces and on {K}leinian groups.
				{I}},
			date={1970},
			ISSN={0003-486X},
			journal={Ann. of Math. (2)},
			volume={91},
			pages={570\ndash 600},
			url={https://doi.org/10.2307/1970638},
			review={\MR{0297992}},
		}
		
		\bib{BumpAuto}{book}{
			author={Bump, Daniel},
			title={Automorphic forms and representations},
			series={Cambridge Studies in Advanced Mathematics},
			publisher={Cambridge University Press, Cambridge},
			date={1997},
			volume={55},
			ISBN={0-521-55098-X},
			url={https://doi-org.ezproxy.lib.uconn.edu/10.1017/CBO9780511609572},
			review={\MR{1431508}},
		}
		
		\bib{ChoQianKobayashi}{article}{
		author={Cho, Gunhee},
		author={Qian, Junqing},
		title={The Kobayashi-Royden metric on punctured spheres},
		date={2020},
		journal={Forum Mathematicum},
		doi={https://doi.org/10.1515/forum-2019-0297},
	}
		
		\bib{KleinBook}{book}{
			author={F.~Klein}, 
			author={R.~Fricke},
			title={Lectured on the theory of automorphic functions},
			publisher={Higher Education Press(Beijing)},
			date={2017},
			ISBN={978-7-04-047840-2},
		}
		
		\bib{GriffithsZariski1971}{article}{
			author={Griffiths, Phillip~A.},
			title={Complex-analytic properties of certain {Z}ariski open sets on
				algebraic varieties},
			date={1971},
			ISSN={0003-486X},
			journal={Ann. of Math. (2)},
			volume={94},
			pages={21\ndash 51},
			url={https://doi-org.ezproxy.lib.uconn.edu/10.2307/1970733},
			review={\MR{0310284}},
		}
	
	    \bib{AllenHatcher}{book}{
	    author={Hatcher, Allen},
	    title={Algebraic topology},
	    publisher={Cambridge University Press, Cambridge},
	    date={2002},
	    pages={xii+544},
	    isbn={0-521-79160-X},
	    isbn={0-521-79540-0},
	    review={\MR{1867354}},
    }
		
		\bib{HubbardTeichmuller}{book}{
			author={Hubbard, John~Hamal},
			title={Teichm\"{u}ller theory and applications to geometry, topology,
				and dynamics. {V}ol. 1},
			publisher={Matrix Editions, Ithaca, NY},
			date={2006},
			ISBN={978-0-9715766-2-9; 0-9715766-2-9},
			%note={Teichm\"{u}ller theory, With contributions by Adrien Douady,
			%	William Dunbar, Roland Roeder, Sylvain Bonnot, David Brown, Allen Hatcher,
			%	Chris Hruska and Sudeb Mitra, With forewords by William Thurston and Clifford
			%	Earle},
			review={\MR{2245223}},
		}
		
		\bib{IrwinKraAccessoryPara}{article}{
			author={Kra, Irwin},
			title={Accessory parameters for punctured spheres},
			date={1989},
			ISSN={0002-9947},
			journal={Trans. Amer. Math. Soc.},
			volume={313},
			number={2},
			pages={589\ndash 617},
			url={https://doi.org/10.2307/2001420},
			review={\MR{958896}},
		}
		
		\bib{McKay2000}{article}{
			author={McKay, John},
			author={Sebbar, Abdellah},
			title={Fuchsian groups, automorphic functions and Schwarzians},
			date={2000},
			%ISSN={1432-1807},
			journal={Mathematische Annalen},
			volume={318},
			%number={2},
			pages={255\ndash 275},
			%url={https://doi.org/10.1007/s002080000116},
		}
		
		\bib{JSMilne}{misc}{
			author={Milne, James~S.},
			title={Modular functions and modular forms (v1.31)},
			date={2017},
			note={Available at www.jmilne.org/math/},
		}
		
		\bib{NehariConformalMap}{book}{
			author={Nehari, Zeev},
			title={Conformal mapping},
			publisher={Dover Publications, Inc., New York},
			date={1975},
			note={Reprinting of the 1952 edition},
			review={\MR{0377031}},
		}
		
		\bib{NevaROLF}{book}{
			author={Nevanlinna, Rolf},
			title={Analytic functions},
			series={Translated from the second German edition by Phillip Emig. Die
				Grundlehren der mathematischen Wissenschaften, Band 162},
			publisher={Springer-Verlag, New York-Berlin},
			date={1970},
			review={\MR{0279280}},
		}
		
		\bib{RochonZhang2012}{article}{
			author={Rochon, Fr\'{e}d\'{e}ric},
			author={Zhang, Zhou},
			title={Asymptotics of complete {K}\"{a}hler metrics of finite volume on
				quasiprojective manifolds},
			date={2012},
			ISSN={0001-8708},
			journal={Adv. Math.},
			volume={231},
			number={5},
			pages={2892\ndash 2952},
			url={https://doi.org/10.1016/j.aim.2012.08.005},
			review={\MR{2970469}},
		}
		
		\bib{Schumacher1998MathAnn}{article}{
			author={Schumacher, Georg},
			title={Asymptotics of {K}\"{a}hler-{E}instein metrics on
				quasi-projective manifolds and an extension theorem on holomorphic maps},
			date={1998},
			ISSN={0025-5831},
			journal={Math. Ann.},
			volume={311},
			number={4},
			pages={631\ndash 645},
			url={https://doi.org/10.1007/s002080050203},
			review={\MR{1637968}},
		}
		
		\bib{SebbarTorsionFree}{article}{
			author={Sebbar, Abdellah},
			title={Torsion-free genus zero congruence subgroups of {${\rm
						PSL}_2(\Bbb R)$}},
			date={2001},
			ISSN={0012-7094},
			journal={Duke Math. J.},
			volume={110},
			number={2},
			pages={377\ndash 396},
			url={https://doi-org.ezproxy.lib.uconn.edu/10.1215/S0012-7094-01-11028-4},
			review={\MR{1865246}},
		}
		
		\bib{SebbarModularCurve}{article}{
			author={Sebbar, Abdellah},
			title={Modular subgroups, forms, curves and surfaces},
			date={2002},
			ISSN={0008-4395},
			journal={Canad. Math. Bull.},
			volume={45},
			number={2},
			pages={294\ndash 308},
			url={https://doi.org/10.4153/CMB-2002-033-1},
			review={\MR{1904094}},
		}
		
		\bib{SerreJP}{book}{
			author={Serre, J.-P.},
			title={A course in arithmetic},
			publisher={Springer-Verlag, New York-Heidelberg},
			date={1973},
			note={Translated from the French, Graduate Texts in Mathematics, No.
				7},
			review={\MR{0344216}},
		}
		
		\bib{ShimuraIntroToArith}{book}{
			author={Shimura, Goro},
			title={Introduction to the arithmetic theory of automorphic functions},
			publisher={Publications of the Mathematical Society of Japan, No. 11.
				Iwanami Shoten, Publishers, Tokyo; Princeton University Press, Princeton,
				N.J.},
			date={1971},
			note={Kan\^{o} Memorial Lectures, No. 1},
			review={\MR{0314766}},
		}
		
		\bib{SteinFourier}{book}{
			author={Stein, Elias~M.},
			author={Shakarchi, Rami},
			title={Fourier analysis},
			series={Princeton Lectures in Analysis},
			publisher={Princeton University Press, Princeton, NJ},
			date={2003},
			volume={1},
			ISBN={0-691-11384-X},
			%note={An introduction},
			review={\MR{1970295}},
		}
		
		\bib{VenkovRussianPaper}{article}{
			author={Venkov, A.~B.},
			title={Accessory coefficients of a second-order {F}uchsian equation with
				real singular points},
			date={1983},
			ISSN={0373-2703},
			journal={Zap. Nauchn. Sem. Leningrad. Otdel. Mat. Inst. Steklov. (LOMI)},
			volume={129},
			pages={17\ndash 29},
			%note={Automorphic functions and number theory. I},
			review={\MR{703005}},
		}
		
		\bib{WangSmoothSiegel1993}{article}{
			author={Wang, Wenxiang},
			title={On the smooth compactification of {S}iegel spaces},
			date={1993},
			ISSN={0022-040X},
			journal={J. Differential Geom.},
			volume={38},
			number={2},
			pages={351\ndash 386},
			url={http://projecteuclid.org.ezproxy.lib.uconn.edu/euclid.jdg/1214454298},
			review={\MR{1237488}},
		}
		
		\bib{DaminWu2006}{article}{
			author={Wu, Damin},
			title={Higher canonical asymptotics of {K}\"{a}hler-{E}instein metrics
				on quasi-projective manifolds},
			date={2006},
			ISSN={1019-8385},
			journal={Comm. Anal. Geom.},
			volume={14},
			number={4},
			pages={795\ndash 845},
			url={http://projecteuclid.org/euclid.cag/1175790107},
			review={\MR{2273294}},
		}
		
		\bib{DaminYau2018}{article}{
			author={Wu, Damin},
			author={Yau, Shing-Tung},
			title={Complete k\"{a}hler-Einstein metrics under certain holomorphic
				covering and examples},
			date={2018},
			journal={Ann. Inst. Fourier (Grenoble)},
			volume={68},
			number={7},
			pages={2901\ndash 2921},
		}
		
		\bib{DaminYau2017}{article}{
			author={Wu, Damin},
			author={Yau, Shing-Tung},
			title={Invariant metrics on negatively pinched complete k\"{a}hler
				manifolds},
			journal={J. Amer. Math. Soc.},
			date={2020},
			volume={33},
			pages={103\ndash 133  },
		}
		
		\bib{YauReviewOfGeometry}{incollection}{
			author={Yau, S.-T.},
			title={Review of geometry and analysis},
			date={2000},
			booktitle={Mathematics: frontiers and perspectives},
			publisher={Amer. Math. Soc., Providence, RI},
			pages={353\ndash 401},
			review={\MR{1754787}},
		}
		
		\bib{Yau1978}{article}{
			author={Yau, Shing-Tung},
			title={M\'{e}triques de k\"{a}hler-einstein sur les vari\'{e}t\'{e}s
				ouvertes},
			date={1978},
			journal={In (Premi\'{e}re Classe de Chern et courbure de Ricci: Preuve de
				la conjecture de Calabi)},
			volume={volume 58 of S\'{e}minaire Palaiseau},
			pages={163\ndash 167},
		}
		
		\bib{YauZhang2014}{article}{
			author={Yau, Shing-Tung},
			author={Zhang, Yi},
			title={The geometry on smooth toroidal compactifications of {S}iegel
				varieties},
			date={2014},
			ISSN={0002-9327},
			journal={Amer. J. Math.},
			volume={136},
			number={4},
			pages={859\ndash 941},
			url={https://doi-org.ezproxy.lib.uconn.edu/10.1353/ajm.2014.0024},
			review={\MR{3245183}},
		}
		
	\end{biblist}
\end{bibdiv}
\end{document}